\documentclass[12pt]{amsart}

\usepackage{graphicx}
\usepackage{amssymb}
\usepackage{amsmath, amscd}
\usepackage{mathrsfs}
\usepackage{tikz,tikz-cd,soul}

\usepackage{hyperref}

\DeclareFontFamily{U}{mathx}{\hyphenchar\font45}
\DeclareFontShape{U}{mathx}{m}{n}{
      <5> <6> <7> <8> <9> <10>
      <10.95> <12> <14.4> <17.28> <20.74> <24.88>
      mathx10
      }{}
\DeclareSymbolFont{mathx}{U}{mathx}{m}{n}
\DeclareFontSubstitution{U}{mathx}{m}{n}
\DeclareMathAccent{\widecheck}{0}{mathx}{"71}
\DeclareMathAccent{\wideparen}{0}{mathx}{"75}

\numberwithin{equation}{section}

\newtheorem{theorem}{Theorem}[section]
\newtheorem{proposition}[theorem]{Proposition}
\newtheorem{corollary}[theorem]{Corollary}
\newtheorem{lemma}[theorem]{Lemma}

\theoremstyle{definition}
\newtheorem{definition}[theorem]{Definition}
\newtheorem{example}[theorem]{Example}

\theoremstyle{remark}
\newtheorem{remark}[theorem]{Remark}

\numberwithin{equation}{section}

\newcommand{\R}{{\mathbb{R}}}
\newcommand{\C}{{\mathbb{C}}}




\usepackage[margin=1in,marginparwidth=0.8in, marginparsep=0.1in]{geometry}

\begin{document}

\title{Chekanov--Eliashberg dg-algebras for singular Legendrians}
\author{Johan Asplund}
\address{Department of mathematics, Uppsala University, Box 480, 751 06 Uppsala, Sweden}
\email{johan.asplund@math.uu.se}
\author{Tobias Ekholm}
\address{Department of mathematics, Uppsala University, Box 480, 751 06 Uppsala, Sweden \and
Institut Mittag-Leffler, Aurav 17, 182 60 Djursholm, Sweden}
\email{tobias.ekholm@math.uu.se}

\thanks{JA is supported by the Knut and Alice Wallenberg Foundation.}
\thanks{TE is supported by the Knut and Alice Wallenberg Foundation and the Swedish Research Council.}

\begin{abstract}
The Chekanov--Eliashberg dg-algebra is a holomorphic curve invariant associated to Legendrian submanifolds of a contact manifold. We extend the definition to Legendrian embeddings of skeleta of Weinstein manifolds. Via Legendrian surgery, the new definition gives direct proofs of wrapped Floer cohomology push-out diagrams \cite{GPS}. It also leads to a proof of a conjectured isomorphism \cite{EL,sylvan} between partially wrapped Floer cohomology and Chekanov--Eliashberg dg-algebras with coefficients in chains on the based loop space.
\end{abstract}

\maketitle

\section{Introduction}\label{sec:intr}
We introduce holomorphic curve invariants of Legendrian embeddings of a Weinstein domain $V$ of dimension $(2n-2)$ with a given handle decomposition $h$ into the $(2n-1)$-dimensional contact boundary $\partial W$ of a Weinstein $2n$-manifold $W$. Here, a Legendrian embedding $f\colon V\to\partial W$ is an embedding that extends to a contact embedding $F\colon(-\epsilon,\epsilon)\times V\to\partial W$, $f=F|_{V\times\{0\}}$, where $\R\times V$ is the contactization of $V$, for some $\epsilon>0$. The image of a Legendrian embedding is often called a Weinstein hypersurface, see e.g., \cite{sylvan,GPS1}.

For such an embedding the core disks of the handles in $h$ are isotropic disks in $\partial W$. In particular, the $(n-1)$-dimensional core disks $l$ are Legendrian and we think of the whole skeleton of $V$, i.e., the union $l_{0}\cup l$ of subcritical core disks $l_0$ and $l$, as a singular Legendrian in $\partial W$. 

To consider ordinary smooth Legendrian $(n-1)$-submanifolds $\Lambda\subset \partial W$ from this point of view, take $V$ to be a small neighborhood of the zero-section in the cotangent bundle $T^{\ast}\Lambda$ and equip it with a handle decomposition $h$ with a single top handle. Then the invariant of $(V,h)$ agrees with the Chekanov--Eliashberg dg-algebra of the Legendrian submanifold $\Lambda$ with coefficients in chains on the based loop space, see Theorem \ref{t:loopspacecoeff}. 

The generalization of dg-algebras of Legendrian submanifolds to embeddings of general Weinstein domains $V\subset\partial W$ is useful from many points of view. For example, it gives Legendrian surgery formulas for wrapped Floer cohomology in $W$ stopped at $V$, see Theorem \ref{t:basic}, dg-algebras for non-closed Legendrian submanifolds $\Lambda\subset\partial X$ with Legendrian boundary in $\partial V$, and cut-and-paste formulas for holomorphic curve invariants that parallels results in \cite{GPS}, see Remark \ref{r:GPS}.  

The remainder of Section \ref{sec:intr} is organized as follows. In Section \ref{ssec:cotangentS^n} we describe our construction in the basic example of the cotangent bundle $T^{\ast}\R^{n}$ of $n$-space. In Section \ref{ssec:main} we define Chekanov--Eliashberg dg-algebras in general and state our main results. In Section \ref{ssec:gencompex} we discuss generalizations and computations.

\subsection{A basic example}\label{ssec:cotangentS^n}
We give a Legendrian surgery description of $T^{\ast}\R^{n}$ and assume for simplicity that $n>2$ so that $\R^{n}\setminus \{0\}$ is simply connected. (We consider $n=2$ in Example \ref{ex:unknot} following \cite{EL2}.) From the Weinstein handle perspective, the cotangent bundle of the sphere $T^\ast S^{n}$ is more basic than $T^{\ast}\R^{n}$ and we start from there.

Consider the standard symplectic $2n$-space $B=(\R^{2n}, \sum_{j=1}^{n}dx_{j}\wedge dy_{j})$ with ideal contact boundary the standard contact sphere $\partial B=S^{2n-1}$. Let $\Lambda_{n-1}\subset S^{2n-1}$ denote the Legendrian unknot, a Legendrian $(n-1)$-sphere Legendrian isotopic to the ideal boundary of a Lagrangian $n$-plane through the origin in $\R^{2n}$. The Chekanov--Eliashberg dg-algebra of $\Lambda_{n-1}$, $CE^{\ast}(\Lambda_{n-1};B)$ is the unital algebra $\C[a_{n-1}]$ on one generator $a_{n-1}$, corresponding to the unique Reeb chord of the standard representative of  $\Lambda_{n-1}$ in a Darboux ball (Figure \ref{fig:point_constraint} left), of degree $|a_{n-1}|=-(n-1)$, see \cite[Section 7.1]{BEE}. Here, unlike in \cite{BEE} but in line with \cite{EL}, we use cohomological grading: Reeb chords are graded by a shift of the negative of their Conley--Zehnder index and the differential increases degree by $1$. 

Attaching a Weinstein $n$-handle to $B$ (viewed as the $2n$-ball with a half-infinite symplectization collar attached) along $\Lambda_{n-1}$ using the standard attaching map, the resulting Weinstein manifold is the cotangent bundle $T^{\ast}S^{n}$ and the co-core disk $C$ in the handle becomes the cotangent fiber in $T^{\ast}S^{n}$. Legendrian surgery \cite{BEE,EkholmHol,EL}, see Remark \ref{r:overviewLegsurg} for an outline, gives a geometrically induced quasi-isomorphism of $A_{\infty}$-algebras from the wrapped Floer cohomology of $C$, $CW^{\ast}(C;T^{\ast}S^{n})$, to $CE^\ast(\Lambda_{n-1};B)$, where $CE^{\ast}(\Lambda_{n-1};B)$ is viewed as a chain complex generated by monomials in Reeb chords with differential induced by the dg-algebra differential, with product given by concatenation of monomials, and with all higher $A_{\infty}$-operations trivial. In this simple example, $CE^{\ast}(\Lambda_{n-1};B)$ can be directly compared to the description of $CW^{\ast}(C;T^{\ast}S^{n})$ as chains on the based loop space $C_{-\ast}(\Omega S^{n})$ of $S^{n}$ with the Pontryagin product, see \cite{AbouzaidBased}. Concretely, we use Adams' construction \cite{Adams,EL} to represent $C_{-\ast}(\Omega S^{n})$ as the reduced cobar construction on the Morse complex of $S^{n}$. Taking a Morse function with only two critical points then gives a quasi-isomorphism of algebras $C_{-\ast}(\Omega S^{n})\approx \C[y_{n-1}]$ where the generator $y_{n-1}$ has degree $-(n-1)$ and corresponds to the maximum of the Morse function. The surgery isomorphism then maps $y_{n-1}$ to $a_{n-1}$. 

Next, consider $T^{\ast}\R^{n}$. Here we view $\Lambda_{n-1}$ instead as a Legendrian stop \cite{sylvan,GPS} in the ideal contact boundary of $B$. This can be thought of, see \cite{ENS,EL,AsplundFiber} as attaching a punctured version of the Weinstein handle, $T^{\ast}(\Lambda_{n-1}\times[0,\infty))$, to $\Lambda_{n-1}$. In the case under consideration, it is clear that the result is $T^{\ast}\R^{n}$. 
According to \cite[Conjecture 3]{EL} the wrapped Floer cohomology $CW^{\ast}(C;T^{\ast}\R^{n})$ is isomorphic to $CE^\ast(\Lambda_{n-1},C_{-\ast}(\Omega \Lambda_{n-1});B)$, the Chekanov--Eliashberg dg-algebra with loop space coefficients. Concretely (recall $n>2$), this dg-algebra is a free algebra on two generators $\C\langle a_{n-1},y_{n-2}\rangle$, with differential $\partial a_{n-1}=y_{n-2}$ induced by a count of rigid disks with a point constraint, see Figure~\ref{fig:point_constraint}. It is then quasi-isomorphic to the ground field $\C$ in agreement with the Floer cohomology of the fiber in $T^{\ast}\R^{n}$. 

We now instead consider $T^{\ast}\R^{n}$ from the point of view in the current paper, where we take a Weinstein handle perspective and represent $T^{\ast}\R^{n}$ as a Weinstein cobordism with negative end the contactization of $T^{\ast}\Lambda_{n-1}$ and one critical handle with co-core disk $C$ at the critical point, $C$ then corresponds to the fiber in $T^{\ast}\R^{n}$. More precisely, let $V\subset \partial B$ be a small neighborhood of the zero section in $T^{\ast}\Lambda_{n-1}$, equip it with a handle decomposition $h$ with a zero handle with core disk $l_{0}$ and an $(n-1)$-handle with core disk $l$. Let $V_{0}$ denote the subcritical part of $V$, i.e., a neighborhood of $l_{0}$. We construct a handle decomposition on the Weinstein cobordism representing $T^{\ast}\R^{n}$ as follows. Start from the $2n$-ball and first attach the handle $V_{0}\times D^{\ast}_{\epsilon}[-1,1]$, where $D^{\ast}_\epsilon[-1,1]$ denotes an $\epsilon$-disk subbundle of the cotangent bundle $T^{\ast}[-1,1]$, along $(-\epsilon,\epsilon)\times V_{0}$ to $\partial B\sqcup (\R\times T^{\ast}\Lambda_{n-1})$ viewed as the positive end of $B\sqcup (\R\times(\R\times T^{\ast}\Lambda_{n-1}))$. Denote the resulting manifold $B_{V}^{0}$.

To connect to chains on the based loop space, take $l_{0}$ to be the constraining point in $\Lambda_{n-1}$ corresponding to the generator $y_{n-1}$ (the maximum of the Morse function). Then the boundary $\partial l$ of the core disk of the top handle $l$ intersects the boundary $\partial V_{0}$ of the neighborhood in a standard Legendrian $(n-2)$-sphere $\Lambda_{n-2}$. The full cobordism corresponding to $T^{\ast}\R^{n}$ is obtained by attaching a standard Weinstein handle to the Legendrian $(n-1)$-sphere $\Sigma(h)$ which consists of two copies of $l$ joined across the handle via $\partial l\times [-1,1]= \Lambda_{n-2}\times [-1,1]$. 

This attaching sphere $\Sigma(h)$ then has two Reeb chords: $a_{n-1}$ and $a_{n-2}$, the latter inside the handle. The differential in $CE^{\ast}(\Sigma(h);B_{V}^{0})$ satisfies $\partial a_{n-1}=a_{n-2}$, see Figure~\ref{fig:point_constraint} and $CE^{\ast}(\Sigma(h);B_{V}^{0})$ is isomorphic to $CW^{\ast}(C;T^{\ast}\R^{n})$. Our approach here is to \emph{define} the Chekanov--Eliashberg dg-algebra of $(V,h)$, $CE^{\ast}((V,h);B)$ as $CE^{\ast}(\Sigma(h);B_{V}^{0})$, the usual dg-algebra of the Legendrian sphere $\Sigma(h)$. For smooth Legendrians ($V$ a small neighborhood of the zero section $T^{\ast}\Lambda$) the new definition will correspond to the usual Chekanov--Eliashberg dg-algebra with loop space coefficients, see Theorem \ref{t:loopspacecoeff}, but the definition makes sense more generally for any Legendrian admitting a Weinstein thickening, see \cite{AB} for one-dimensional examples.  

If we compare the usual chains on the loop space dg-algebra with the one defined here, we find that in the former approach the algebra contains generators of both topological (chains on the based loop space) and dynamical (Reeb chords) nature, whereas in the latter all generators are of dynamical nature.

\begin{figure}[!htb]
	\centering
	\includegraphics{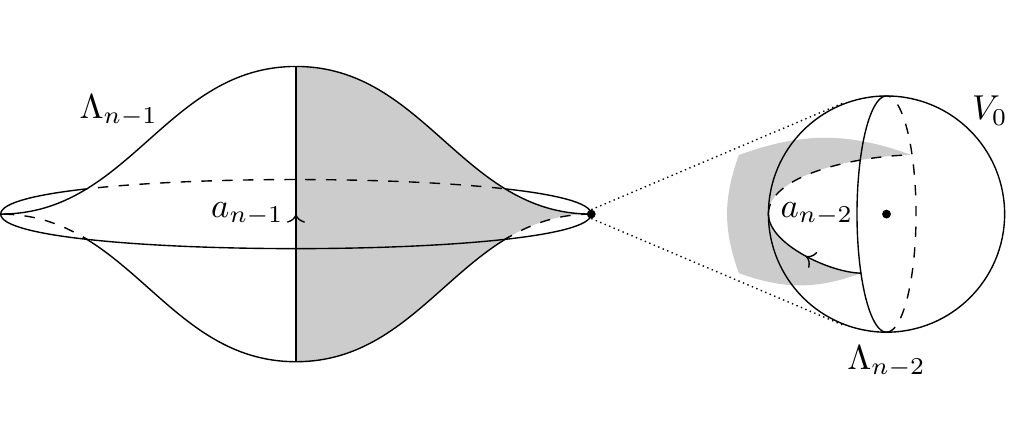}
	\label{fig:point_constraint}
	\caption{Front projection of Legendrian unknot $\Lambda_{n-1}$ with a point constrained holomorphic curve that gives $\partial a_{n-1} = a_{n-2}$.}
\end{figure}

\subsection{General definition and main results}\label{ssec:main}
Our definition of dg-algebras for general Weinstein domains is the following generalization of the cobordism method described for $T^{\ast}\R^{n}$ in Section \ref{ssec:cotangentS^n}. Consider a Weinstein $(2n-2)$-domain $V$ with handle decomposition $h$, where $h$ encodes both handles and attaching maps. We write $V_{0}$ for the subcritical part of $V$, $\partial V_{0}$ for its contact boundary, $l_{j}$, $j=1,\dots,m$ for the core-disks of the critical handles and $\partial l_{j}$ for their boundaries, i.e., the attaching spheres for the corresponding handle, and $\partial l=\bigcup_{j=1}^{m}\partial l_{j}$ for their union. Assume that $(V,h)\subset \partial W$ is a Legendrian embedding into the contact boundary of a Weinstein $2n$-manifold $W$. For simplicity, we will assume throughout that $c_{1}(W)=c_{1}(V)=0$. 

We build a cobordism $W_{V}$ (which is our version of $W$ stopped at $V$, see \cite{sylvan, GPS}, compare also \cite{avdek}) with negative end $\R\times V$ (in the negative end we think of $V$ as the Weinstein manifold which is the completion of the embedded Weinstein domain) in two steps: First we construct $W_{V}^{0}$ by attaching $V_{0}\times D^{\ast}_{\epsilon}[-1,1]$, where $D_{\epsilon}^{\ast}[-1,1]$ denotes an $\epsilon$-disk subbundle of the cotangent bundle $T^{\ast}[-1,1]$, to $\partial W\sqcup (\R\times V)$ along $V_{0}\times D^{\ast}_{\epsilon}[-1,1]|_{\{-1,1\}}$. Then the full cobordism $W_{V}$ is obtained from $W_{V}^{0}$ by attaching critical Weinstein $n$-handles to a link of Legendrian spheres $\Sigma(h)\subset\partial W_{V}^{0}$ defined as follows. There is one sphere component $\Sigma(h_{j})\approx S^{n-1}$ of $\Sigma(h)$ for each top-dimensional handle $h_{j}$ of $V$. Here $\Sigma(h_{j})$ consists of two copies of $l_{j}\approx D^{n-1}$, one embedded in $\partial W\setminus ((-\epsilon,\epsilon)\times V_{0})$ with boundary on $\{0\}\times \partial V_{0}$ and one in $(\R\times V)\setminus((-\epsilon,\epsilon)\times V_{0})$. The copies are joined across the handle $V_{0}\times D^{\ast}_{\epsilon}[-1,1]$ by $\partial l_{j}\times [-1,1]\approx S^{n-2}\times[-1,1]$, see Figure \ref{fig:W_stopped_subcrit}.

\begin{figure}[!htb]
	\includegraphics{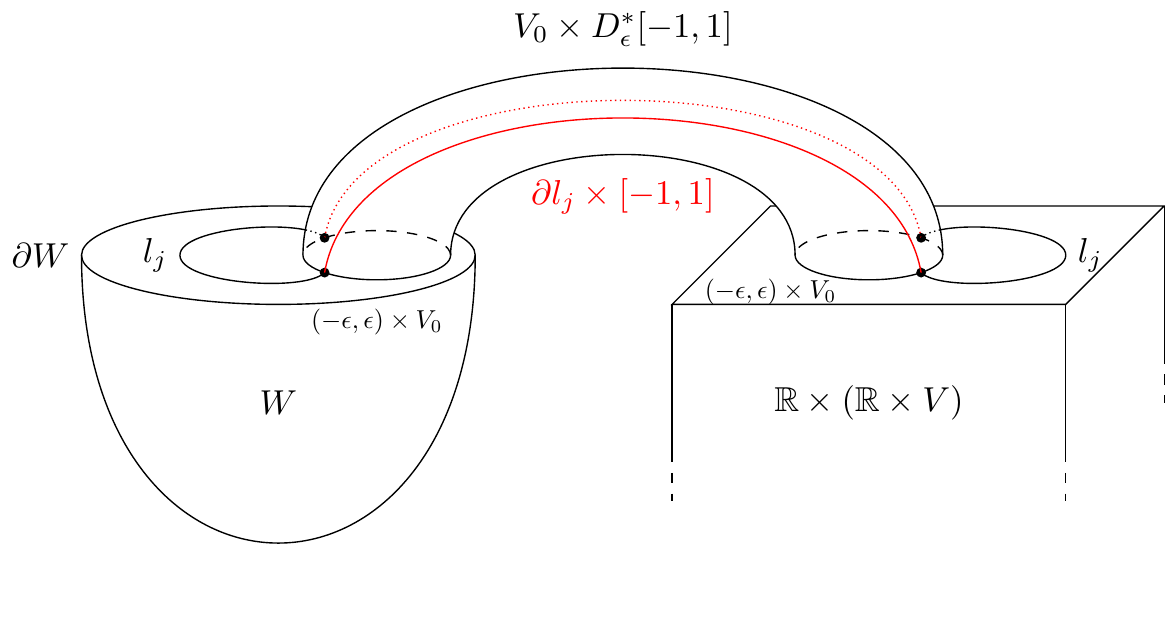}
	\caption{The Weinstein cobordism $W^0_V$ is constructed by attaching $V_0 \times D^\ast_{\epsilon}[-1,1]$ along $V_{0}\times D^{\ast}_{\epsilon}[-1,1]|_{\{-1,1\}}$.}
	\label{fig:W_stopped_subcrit}
\end{figure}
 
We then \emph{define} the dg-algebra of $(V,h)$, $CE^{\ast}((V,h);W)$ as the usual dg-algebra of its attaching spheres, 
\begin{equation}\label{eq:defCE(V)} 
CE^{\ast}((V,h);W) \ := \ CE^{\ast}(\Sigma(h);W_{V}^{0}),
\end{equation}
where $\Sigma(h)\subset\partial W_{V}^{0}$ is the Legendrian link of attaching spheres described above and where $W_{V}^{0}$ in $CE^{\ast}(\Sigma(h);W_{V}^{0})$ indicates that the differential counts holomorphic disks anchored in $W_{V}^{0}$, see \cite{BEE,EkholmHol,EL}. It is easy to see that Legendrian isotopies of $(V,h)$ induce Legendrian isotopies of $\Sigma(h)$ and it follows that $CE^{\ast}((V,h);W)$ is a Legendrian isotopy invariant. 

The dg-algebra in \eqref{eq:defCE(V)} naturally contains the dg-algebra of the attaching spheres of $V$ as a subalgebra. Indeed, as explained above, the attaching spheres $\partial l\subset \partial V_{0}$ of the top handles of $V$ in $\partial V_{0}$ appears as `equators' of the spheres in $\Sigma(h)$ and, shrinking the handle, a straightforward action argument shows that equipping $V_{0}$ with what we call an index definite Weinstein structure, see Section \ref{sec:Weinhand}, the chords of $\partial l$ generate a dg-subalgebra of $CE^{\ast}(\Sigma(h);W_{V}^{0})$ canonically identified with $CE^{\ast}(\partial l;V_{0})$.

It follows by Legendrian surgery \cite{BEE,EkholmHol,EL} that the dg-algebra $CE^{\ast}((V,h);W)$ contains the wrapped Floer cohomology algebra $CW^{\ast}(c;V)\approx CE^{\ast}(\partial l;V_{0})$ of the union $c=\bigcup_{j} c_{j}$ of the co-core $(n-1)$-disks dual to $l_{j}$ in the critical handles $h_{j}$ of $V$, as a subalgebra. From the point of view of Floer cohomology with coefficients it is then clear that our dg-algebras have the most general coefficients: the endomorphism algebra of generators of the Fukaya category of the embedded domain itself. In this sense our construction generalizes Floer cohomology with coefficients in chains on the based loop space to objects in the Fukaya category represented by skeleta of Weinstein manifolds, no matter how singular their skeleta may be, see Section \ref{sssec:Floercohomology}. 

With $CE^{\ast}((V,h);W)$ defined we consider its properties. Let $W_{V}$ denote the Weinstein manifold $W$ stopped at $V$, see \cite{EL,sylvan,GPS}. Let $C(h)=\bigcup_{j=1}^{m}C(h_{j})$ denote the union of Lagrangian co-core $n$-disks $C(h_{j})$ of $W_{V}$ in the Weinstein $n$-handles attached to $\Sigma(h_{j})\subset \partial W_{V}^{0}$. 

\begin{theorem}\label{t:basic}
There is a natural surgery isomorphism
\[ 
\Phi\colon CW^{\ast}(C(h);W_{V})\longrightarrow CE^{\ast}((V,h);W)
\]
of $A_{\infty}$-algebras, where the right hand side is viewed as a complex generated by Reeb chord monomials with differential induced by the dg-algebra differential, product given by concatenation, and all higher operations trivial. 
\end{theorem}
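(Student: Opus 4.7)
The plan is to recognize Theorem~\ref{t:basic} as a direct instance of the Legendrian surgery formula of \cite{BEE, EkholmHol, EL}, applied to the data already present in~\eqref{eq:defCE(V)}. By construction $W_V$ is obtained from the Weinstein cobordism $W_V^0$ by attaching critical Weinstein $n$-handles along the Legendrian link $\Sigma(h) \subset \partial W_V^0$, and $C(h) = \bigcup_j C(h_j)$ is the union of the Lagrangian co-core disks of those handles. Since by definition $CE^{\ast}((V,h);W) = CE^{\ast}(\Sigma(h); W_V^0)$, what remains is to establish the surgery isomorphism
\[
CW^{\ast}(C(h); W_V) \;\xrightarrow{\Phi}\; CE^{\ast}(\Sigma(h); W_V^0),
\]
in the setting where the ambient Weinstein cobordism $W_V^0$ is not assumed to be subcritical and curves are allowed to anchor in $W_V^0$.

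The map $\Phi$ is defined on generators in the standard way. After enough Hamiltonian wrapping, each time-$1$ Reeb chord of $\partial C(h)$ in $\partial W_V$ can be arranged to consist of short arcs alternating between two types: arcs lying near $\Sigma(h)$ inside $\partial W_V^0$, each $C^0$-close to a Reeb chord of $\Sigma(h)$, and ``through-handle'' arcs crossing an attached $n$-handle from one component of $\partial C(h)$ to another. A wrapped chord is thereby labelled by a word $c_{i_1}\cdots c_{i_k}$ in Reeb chords of $\Sigma(h)$, and $\Phi$ sends it to the corresponding monomial in the dg-algebra. Because concatenation of such chords matches multiplication of monomials, the product $\mu^{2}$ on $CW^{\ast}$ corresponds to the associative product of $CE^{\ast}(\Sigma(h); W_V^0)$, while the operations $\mu^{\geq 3}$ vanish on the target by construction; on the source, strict $A_\infty$-associativity in the Morse-Bott wrapped model is part of the output of \cite{BEE,EkholmHol,EL}.

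The chain-map property is verified via SFT-type neck-stretching along $\partial W_V^0 \subset W_V$. A pseudoholomorphic strip in $W_V$ with boundary on $C(h)$ contributing to the wrapped differential degenerates, in the neck limit, into a configuration of: (i) a holomorphic disk in the symplectization $\R \times \partial W_V^0$ with boundary on the Lagrangian cylinder over $\Sigma(h)$ and positive punctures at Reeb chords of $\Sigma(h)$; (ii) anchoring holomorphic disks in $W_V^0$ with positive ends on $\Sigma(h)$; and (iii) trivial strips through the attached $n$-handles capping off the positive punctures of (i) against the co-core boundaries. Configurations of types (i) and (ii) together are exactly those counted by the differential of $CE^{\ast}(\Sigma(h); W_V^0)$, and the standard gluing theorem inverts this degeneration to match the two differentials. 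The same procedure applied to pairs-of-pants transfers the product to concatenation of monomials.

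The main obstacle is the analytic input: SFT compactness, transversality, and gluing for anchored configurations in the non-subcritical ambient $W_V^0$, going beyond the original subcritical setting of \cite{BEE}. This is precisely what the anchored surgery framework of \cite{EkholmHol, EL} is designed to handle, and the verification for $\Sigma(h) \subset \partial W_V^0$ uses the index-definite Weinstein structure on $V_0$ mentioned just after~\eqref{eq:defCE(V)}: this structure prevents spurious Reeb chords of $\Sigma(h)$ from appearing in the $V_0$-handle region, so that the Reeb chord alphabet and the corresponding $\ol\partial$-problem are intrinsic to the attaching data $h$. Once these technical points are in place, the isomorphism asserted by the theorem is a word-by-word application of the anchored surgery formula.
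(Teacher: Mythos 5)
Your proposal is correct and takes essentially the same route as the paper: the theorem is observed to be an immediate instance of the Legendrian surgery formula of \cite{BEE,EkholmHol,EL} applied to the attaching link $\Sigma(h)\subset\partial W_{V}^{0}$ with co-cores $C(h)$, combined with the definition $CE^{\ast}((V,h);W)=CE^{\ast}(\Sigma(h);W_{V}^{0})$. Your additional elaboration on how the surgery isomorphism itself is established (word decomposition of wrapped chords, neck-stretching, anchored configurations) matches in substance the outline the paper gives in its accompanying remark, with only cosmetic differences (the paper's cited proof uses a non-Hamiltonian wrapped model and an action-filtration/triangularity argument rather than Hamiltonian wrapping).
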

Given the Legendrian surgery isomorphism from \cite{BEE,EkholmHol,EL}, Theorem \ref{t:basic} follows immediately from the  definition of $CE^\ast((V,h);W)$ and is one of its main motivations.

We next relate our new definition of the dg-algebra to the standard version. Let $\Lambda\subset \partial W$ be a smooth Legendrian and consider its Chekanov--Eliashberg dg-algebra with coefficients in chains on the based loop space $CE^{\ast}(\Lambda,C_{-\ast}(\Omega\Lambda);W)$, as defined in \cite{EL}. Let $(V(\Lambda),h(\Lambda))$ denote a small disk sub-bundle of the cotangent bundle $T^{\ast}\Lambda$ of $\Lambda$ with a handle decomposition with a single top-dimensional handle.
\begin{theorem}\label{t:loopspacecoeff}
	There is a natural quasi-isomorphism
	\[ 
	\Psi\colon CE^{\ast}((V(\Lambda),h(\Lambda));W)\longrightarrow CE^{\ast}(\Lambda,C_{-\ast}(\Omega\Lambda);W).
	\]
\end{theorem}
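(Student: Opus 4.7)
The plan is to reduce both sides to a common combinatorial model via Morse theory on $\Lambda$ and to identify generators and differentials through a neck-stretching argument along the contact boundary of the subcritical cobordism handle used to build $W^{0}_{V(\Lambda)}$. I would first choose a Morse function $f\colon\Lambda\to\R$ with a unique maximum $p_{0}$ and remaining critical points $p_{1},\dots,p_{k}$ of lower index, and pick $h(\Lambda)$ so that the top handle of $V(\Lambda)$ corresponds to $p_{0}$ and the subcritical handles correspond to $p_{1},\dots,p_{k}$. Adams' theorem, used as in \cite{EL}, identifies $C_{-\ast}(\Omega\Lambda)$ based at $p_{0}$ with the reduced cobar construction on the Morse complex of $f$, so that generators of the right hand side can be taken to be words $c_{0}\mathbf{p}_{1}c_{1}\cdots\mathbf{p}_{r}c_{r}$ with $c_{i}$ Reeb chords of $\Lambda$ and $\mathbf{p}_{i}$ non-empty words in $\{p_{1},\dots,p_{k}\}$.

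Next I would analyze the Reeb chord structure of $\Sigma(h(\Lambda))\subset\partial W^{0}_{V(\Lambda)}$. After a small perturbation separating the two copies of $l$ using $f$, the chords of $\Sigma(h(\Lambda))$ split into two kinds: pure chords, which correspond to the Reeb chords $c$ of $\Lambda$ appearing in each copy of $l$; and mixed chords, which traverse the subcritical cobordism handle $V_{0}\times D^{\ast}_{\epsilon}[-1,1]$ between the two copies. The action argument from the paragraph following \eqref{eq:defCE(V)} identifies the subalgebra generated by the mixed chords and the equator chords coming from $\partial l\subset\partial V_{0}$ with $CE^{\ast}(\partial l;V_{0})$, and by Legendrian surgery \cite{BEE,EkholmHol,EL} combined with Adams' theorem this subalgebra models $C_{-\ast}(\Omega\Lambda)$. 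A typical generator of $CE^{\ast}(\Sigma(h(\Lambda));W^{0}_{V(\Lambda)})$ then takes the form of an alternating word of pure chords and mixed chords, matching the words $c_{0}\mathbf{p}_{1}c_{1}\cdots\mathbf{p}_{r}c_{r}$ on the right hand side.

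The map $\Psi$ is then defined by sending each pure chord $c_{i}$ to the corresponding Reeb chord of $\Lambda$, and each mixed chord to its image $\mathbf{p}_{i}$ under the Adams identification, extended multiplicatively to a unital dg-algebra morphism. To verify the chain map property I would perform neck-stretching along the contact boundary of the attached subcritical handle in $W^{0}_{V(\Lambda)}$. In the limit, rigid holomorphic disks with boundary on $\Sigma(h(\Lambda))$ break into a main disk in the symplectization of $\partial W$ with boundary on two copies of $\Lambda$, together with Morse flow-tree fragments inside the cotangent part. The Floer--Morse correspondence \cite{EkholmHol,EL} identifies the flow-tree contributions with the cobar differential on the $\mathbf{p}_{i}$, while the main disk reproduces the Chekanov--Eliashberg differential with the path insertion rule of \cite{EL}. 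Filtering both sides by the number of pure chord slots $c_{i}$ (equivalently, by the number of punctures of the main disk), $\Psi$ becomes a filtered dg-algebra morphism whose associated graded reduces, slot-wise, to the Adams quasi-isomorphism identifying the mixed chord subalgebra with $C_{-\ast}(\Omega\Lambda)$; a standard spectral sequence comparison then produces the quasi-isomorphism of the full dg-algebras.

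The main obstacle will be the neck-stretching analysis itself: one must rule out broken buildings with additional SFT levels that would mix Reeb and Morse contributions nontrivially, and verify that the only rigid limits consist of one main disk in the symplectization of $\partial W$ together with Morse flow-tree fragments in $T^{\ast}\Lambda$. This requires a careful combined action, index and transversality analysis exploiting the cotangent form of the subcritical handle $V_{0}\times D^{\ast}_{\epsilon}[-1,1]$, along the lines of \cite{EkholmHol,EL} but adapted to the cobordism $W^{0}_{V(\Lambda)}$.
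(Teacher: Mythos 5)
There is a genuine gap, and it sits exactly where you place your ``main obstacle.'' Your map $\Psi$ is defined generator-by-generator (pure chord $\mapsto$ the corresponding chord of $\Lambda$, mixed chord $\mapsto$ its ``image under the Adams identification'') and then extended multiplicatively. Two problems arise. First, there is no generator-level bijection between the Reeb chords of $\partial l\subset\partial V_{0}$ and cobar words in the Morse critical points of $\Lambda$: what exists is a quasi-isomorphism $CE^{\ast}(\partial l;\partial V_{0})\to C_{-\ast}(\Omega\bar\Lambda)$ defined by counting holomorphic curves (the map $\psi$ of Lemma \ref{l:surgeryloopspace}), so ``its image $\mathbf{p}_{i}$'' is a chain, not a word, and the assignment already presupposes the curve counts you are trying to organize. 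Second, and more seriously, the naive multiplicative extension is not a chain map: a rigid disk contributing to $\partial c$ for a long chord $c$ of $\Sigma(h)$ has boundary arcs that sweep out nontrivial chains of based loops between consecutive long negative punctures, and these produce exactly the coefficients $\sigma_{0}b_{1}\sigma_{1}\cdots b_{m}\sigma_{m}$ appearing in the differential of $CE^{\ast}(\Lambda,C_{-\ast}(\Omega\Lambda))$. A map that merely relabels generators cannot see these terms; the neck-stretching you propose would not ``verify'' the naive $\Psi$ but would instead be needed to \emph{define} the corrected map, and that SFT compactness/gluing analysis relating two different moduli problems is the entire content of the proof, left unproven in your plan.

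The paper takes a route that avoids this analysis. It observes that $W_{V}^{0}$ contains a Lagrangian cobordism $Q$ of topology $\Lambda\times[-1,\delta]$ with positive boundary $\Sigma(h)$, and defines $\Phi(a)=[\mathcal{M}(a)]$ directly as a count of disks in the cobordism with boundary on $Q$, negative punctures at chords of $\Lambda$, and Alexander--Whitney loop-space decorations on the boundary arcs. The chain map property is then the standard boundary-of-one-dimensional-moduli argument, with Lemma \ref{l:nocross} confining boundaries to $Q$. The quasi-isomorphism on the handle subalgebra is obtained not by an Adams-type identification but by the commutative triangle $\alpha=\psi\circ\beta$ through $CW^{\ast}(F;T^{\ast}\Lambda)$, where $\alpha$ is Abouzaid's equivalence and $\beta$ the surgery isomorphism, and the passage to the full algebra uses the action filtration, with triangularity coming from the trivial strips over the long chords. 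If you want to salvage your approach, you should replace the generator-wise definition by the cobordism-map definition (or prove the degeneration statement in full), and replace the ``number of pure chord slots'' filtration by the action filtration, which is what actually makes the diagonal terms visible.
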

Theorem \ref{t:loopspacecoeff} is proved in Section \ref{sec:newandold}.
We point out that Theorems \ref{t:basic} and \ref{t:loopspacecoeff} together establish \cite[Conjecture 3]{EL}.

Consider next two Weinstein manifolds $W$ and $W'$ that both contain $(V,h)$ as a Legendrian embedding in their boundary. Then we can join $W$ and $W'$ by attaching a $V$-handle, $V\times D^{\ast}_{\epsilon} [-1,1]$, along $(-\epsilon,\epsilon)\times V$ to build a new Weinstein manifold $W\#_{V} W'$. As in the construction of the handle decomposition of $X$ above, the $V$-handle gives a link of Legendrian attaching spheres $\Sigma_{\#}(h)$ in $\partial(W\#_{V_{0}}W')$, where $V_{0}$ is the subcritical part of $V$ and where a component $\Sigma_{\#}(h_{j})$ consists of two copies of $l_{j}$, inside $\partial W$ and $\partial W'$, joined by $\partial l_{j}\times [-1,1]$ across the handle.  
\begin{theorem}\label{t:vankampen}
There is a natural push-out diagram
\[ 
\begin{CD}
CE^{\ast}(\partial l;V_{0}) @>>> CE^{\ast}((V,h);W') \\
@VVV @VVV \\
CE^{\ast}((V,h);W) @>>> CE^{\ast}(\Sigma_{\#}(h);W\#_{V_{0}} W')
\end{CD},
\]
where, for a suitable contact form and almost complex structure on the $V_{0}$-handle, all maps are defined by inclusion on Reeb chord generators.
\end{theorem}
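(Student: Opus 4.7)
The plan is to choose contact forms and compatible almost complex structures on all four cobordisms so that, on Reeb chord generators, the bottom-right algebra is tautologically the amalgamation of the two upper algebras over the left one, and then to show by an index-definite/action argument that differentials and inclusion maps respect this split. The outcome is that the stated square is a pushout in dg-algebras because it is one on generators and all differentials preserve the split.

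\textbf{Generators.} Each component $\Sigma_{\#}(h_{j})$ is built from two copies of $l_{j}$ (one in $\partial W$, one in $\partial W'$) joined across the $V_{0}$-handle by $\partial l_{j}\times [-1,1]$, as in Section~\ref{ssec:main}. Equipping $V_{0}$ with an index-definite Weinstein structure and taking the $V_{0}$-handle thin, the Reeb dynamics on the handle region are dominated by those on $\partial V_{0}$, so the Reeb chords of $\Sigma_{\#}(h)$ that lie in the handle region are precisely the chords of $\partial l$ in $V_{0}$. Chords outside the handle split cleanly into chords of the $W$-copy of $l$ (which are exactly the chords of $\Sigma(h)\subset \partial W_{V}^{0}$ away from $\partial l$) and the analogous $W'$-chords. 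As generating sets this gives precisely the pushout identity.

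\textbf{Maps and disks.} The left vertical and top horizontal arrows $CE^{\ast}(\partial l;V_{0})\to CE^{\ast}((V,h);W)$ and $CE^{\ast}(\partial l;V_{0})\to CE^{\ast}((V,h);W')$ are the inclusions already explained after \eqref{eq:defCE(V)}: an index-definite action argument shows that any rigid disk on $\Sigma(h)$ with asymptotics at $\partial l$-chords only is confined to $V_{0}$ and so is counted by the differential in $CE^{\ast}(\partial l;V_{0})$. The remaining two arrows $CE^{\ast}((V,h);W)\to CE^{\ast}(\Sigma_{\#}(h);W\#_{V_{0}}W')$ and its $W'$-counterpart are similarly inclusions on generators; the content is that the differential of a $W$-side generator in the pushout algebra counts only disks living on the $W$-side (possibly with breakings along $\partial l$-chords). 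For an almost complex structure that stretches the neck of the $V_{0}$-handle, SFT-compactness forces any candidate disk with a boundary arc crossing the handle to break along Reeb chords on $\partial V_{0}$; the index-definite structure rules out all such chords except those on $\partial l$, which are already accounted for by the common subalgebra.

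\textbf{Conclusion and main obstacle.} The three steps above exhibit $CE^{\ast}(\Sigma_{\#}(h);W\#_{V_{0}}W')$ as freely generated by the union of the generators of the two upper algebras, amalgamated over $CE^{\ast}(\partial l;V_{0})$, with differential split according to the decomposition; this is precisely the pushout property in dg-algebras. The main obstacle is the confinement/no-mixed-disks statement in the preceding paragraph: it combines the index-definite Weinstein structure on $V_{0}$ with a neck-stretching and SFT-compactness argument and careful action/energy estimates to eliminate curves that would leave the $V_{0}$-handle through Reeb chords other than those on $\partial l$. Once this confinement is established, the pushout property follows essentially formally from the matching of generating sets.
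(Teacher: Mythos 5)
Your overall architecture is the same as the paper's: match generators, observe that all four maps are inclusions of Reeb chords, and reduce everything to a confinement statement saying that a disk with positive puncture at a $W$-side chord never sees the $W'$-side. The paper's proof is exactly this reduction, with the confinement supplied by Lemma~\ref{l:nocross}. Where you diverge is in how you propose to prove confinement, and that is where there is a genuine gap.

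The paper does not use neck-stretching. It constructs a specific handle-adapted almost complex structure on $V_{0}\times D^{\ast}_{\epsilon}[-1,1]$ for which the fibers $V_{(x,y)}$ of the projection to the $(x,y)$-plane are $J$-complex (Lemma~\ref{l:complexproj}), and then argues by positivity of intersection: a curve crossing the handle must hit every fiber $V_{(x,0)}$ along a segment of the $x$-axis with locally constant positive intersection number, which forces a positive asymptote inside the handle away from the origin; no such asymptote exists below the action cutoff by Lemmas~\ref{l:handlesandnoorbits} and~\ref{l:generators}. This \emph{forbids} crossing outright. Your SFT-compactness argument, by contrast, at best shows that a crossing disk \emph{degenerates} into a building broken along chords of $\partial l$; it does not show that such buildings (and hence, by gluing, unbroken crossing disks for the unstretched $J$) fail to contribute to the differential. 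A rigid disk with positive puncture at a $W$-chord whose limit building has a nontrivial level on the $W'$-side would still put $W'$-side chords, or at least $W'$-dependent counts, into the differential of a $W$-generator, and the pushout would fail. Saying the breaking chords are ``already accounted for by the common subalgebra'' conflates the appearance of $\partial l$-chords as negative punctures (harmless) with the existence of curve components on the wrong side of the handle (fatal). You would need an additional action, index, or intersection argument to kill these configurations --- which is precisely what the paper's foliation argument provides directly.

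Two smaller issues: first, the neck-stretch itself is not set up --- the natural separating hypersurface $\{x=0\}$ in the handle is tangent to the Liouville field $z+2x\partial_{x}-y\partial_{y}$ there, so it is not of contact type, and you do not exhibit a hypersurface along which SFT compactness applies. Second, your appeal to index-definiteness to rule out all chords except those of $\partial l$ only works for $n>2$; the paper treats $n=2$ separately (Remark~\ref{r:dimV_0=2}), where disks are only confined to an arbitrarily small neighborhood of $\{y=0\}$ rather than to the zero fiber.
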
   

\begin{remark}\label{r:GPS}
Combining Theorems \ref{t:basic} and \ref{t:vankampen} and the Legendrian surgery isomorphism $CW^{\ast}(C_{\#}(h);W\#_{V} W')\approx CE^{\ast}(\Sigma_{\#}(h);W\#_{V_{0}} W')$, where $C_{\#}(h_{j})$ is the co-core disk in the Weinstein handle attached to $\Sigma_{\#}(h_{j})$ and $C_{\#}(h)=\bigcup_{j} C_{\#}(h_{j})$, we obtain the following push-out diagram of \cite{GPS}:
\[ 
\begin{CD}
	CW^{\ast}(c;V) @>>> CW^{\ast}(C'(h);W'_{V}) \\
	@VVV @VVV \\
	CW^{\ast}(C(h);W_{V}) @>>> CW^{\ast}(C_{\#}(h);W\#_{V} W')
\end{CD},
\]
where $W_{V}$ and $W'_{V}$ denote $W$ and $W'$ stopped at $V$ and $C(h)$ and $C'(h)$ denote the union of Lagrangian disks dual to the top handles $h$ of $V$ in $W$ and $W'$, respectively.
\end{remark}

\begin{remark}
Our definition of the Chekanov--Eliashberg dg-algebra of a Legendrian embedding of a Weinstein domain $V$ involves a handle decomposition $h$ of $V$ and $CE^{\ast}((V,h);W)$ indeed depends on $h$, see Example \ref{ex:noninv} for an illustration in a simple example. Stronger forms of invariance of the dg-algebra requires rather restrictive deformations: $CE^{\ast}((V,h);W)$ is defined in terms of ordinary Chekanov--Eliashberg dg-algebras of attaching spheres, for $V$ and for the manifolds that result by attaching a $V$-handle to $W$. As explained above, the quasi-isomorphism class of $CE^{\ast}((V,h);W)$ is independent up to Legendrian isotopies of the handles in $h$, with compatible isotopies of the attaching maps. 

Using the isomorphism in Theorem \ref{t:basic} and known properties of Fukaya categories one can find characteristics of $CE^{\ast}((V,h);W)$ that are independdent of the handle decomposition $h$. Since $CE^{\ast}((V,h);W)$ is isomorphic to the endomorphism algebra of a set of co-core disks that together with other co-core disks already present in $W$ generate the Fukaya category of $W_{V}$. There is a cobordism map taking the wrapped Fukaya category of $W_{V}$ to that of $W$ and its kernel is generated by the co-core disks of the stop, see \cite[Section 1.2]{sylvan2} and \cite{GPS}. It then follows that their derived module categories are independent of the generating set and hence of $h$. As an example of a consequnece of this, using properties of Fukaya categories, see e.g.~\cite{GPS1}, or alternatively directly using the Legendrian surgery description of symplectic homology \cite{BEE}, it follows that the Hochschild homology of $CE^{\ast}((V,h);W)$ is isomorphic to the mapping cone of the cobordism map on symplectic cohomology induced by the Weinstein cobordism between $W_{V}$ and $W$, and is hence independent of $h$.   
\end{remark}

\subsection{Generalizations, computations, and examples}\label{ssec:gencompex}
We also describe natural generalizations of the basic dg-algebras introduced above and illustrate how to work with them by describing schemes for computation and studying examples. 

In Section \ref{sec:Legwbdry} we discuss relative versions of the dg-algebra. More precisely, with $V\subset \partial W$ as above if $\Lambda\subset \partial W$ is a Legendrian submanifold with boundary $\partial\Lambda$ in $\partial V$ then there is a Chekanov--Eliashberg dg-algebra $CE^\ast(\Lambda;V;W)$ associated to $\Lambda$. We show that relative versions of the theorems above hold. In Section \ref{sec:cobordism} we discuss analogues of exact Lagrangian fillings and cobordisms for our singular Legendrians $(V,h)$ and sketch constructions of their Floer cohomology and of induced cobordism maps.   

In Section \ref{sec:exandappl} we describe how to find generators and compute differentials in the case when the contact manifold $\partial W$ is a contactization. We also study several concrete examples, e.g., we exhibit Legendrian tri-valent graphs that do not admit Lagrangian fillings with restricted singularities.  

\subsection*{Acknowledgements} TE thanks Sylvain Courte and Vivek Shende for valuable discussions. The authors thank Baptiste Chantraine and an anonymous referee for careful reading and valuable suggestions for improvements of the text.

\section{Weinstein handles and decompositions of $V$-handles}\label{sec:handle_decomp_V}
In this section we discuss handle decompositions of the basic building block of our construction. Let $(V,h)$ be a $(2n-2)$-dimensional Weinstein manifold with Liouville form $\lambda$, Liouville vector field $z$, handle decomposition $h$, and exhausting Morse function $\phi\colon V\to [0,\infty)$ with a single non-degenerate minimum at value $0$. Then there is $\rho>0$ such that $\phi^{-1}([\rho,\infty))\approx [0,\infty) \times \partial V$ is the positive half of the symplectization of the contact boundary $\partial V$ of $V$. By scaling, we may take $\rho>0$ arbitrarily small. Let $F^{z}(v,t)$ denote the flow of $z$ with initial condition $v\in V$. The \emph{skeleton} of $V$ is the set of points $v\in V$ such that $\lim_{t\to\infty}\phi(F^{z}(v,t))<\infty$. The skeleton is then a union of core disks in the handles of the decomposition $h$. We will often think of the exhausting function $\phi$ as being approximately zero on the skeleton so that $\phi^{-1}(\delta)$ for $\delta>0$ gives a decreasing family of neighborhoods of the skeleton as $\delta\to 0$.

We next discuss local models for our handles. We start with basic Weinstein handles and index definite Weinstein structures on subcritical Weinstein manifolds and then go to $V$-handles.

\subsection{Weinstein handles}\label{sec:Weinhand}
Consider $\R^{2n}$ with the standard symplectic form $\omega=\sum_{j=1}^{n} dx_{j}\wedge dy_{j}$. For $0\le k\le n$, the vector field 
\[ 
Z_{k}=\sum_{j=1}^{k}(2x_{j}\partial_{x_{j}}-y_{j}\partial_{y_{j}}) + \sum_{j=k+1}^{n}\frac12(x_{j}\partial_{x_{j}}+y_{j}\partial_{y_{j}})
\]
is a Liouville vector field, $L_{Z_{k}}\omega=\omega$. We consider for $\delta\ge 0$ the contact hypersurfaces 
\[ 
\partial_{\pm\delta}H_{k}=\left\{(x,y)\colon \sum_{j=1}^{k}(x_{j}^{2}-\tfrac12 y_{j}^{2})+\sum_{j=k+1}^{n}\tfrac14(x_{j}^{2}+y_{j}^{2})=\pm\delta\right\}
\]
and $\R^{2n}=H_{k}$ with the Liouville field $Z_{k}$ as a symplectic cobordism with negative and positive ends 
\begin{align*}
	\partial_{-\delta} H_{k}&\approx S^{k-1}\times D^{k+2(n-k)},\\
	\partial_{\delta} H_{k}&\approx D^{k}\times S^{2n-k-1}.
\end{align*}
We say that $D^{k}\times \{0\}\subset H_{k}$ is the \emph{core disk} of the handle with \emph{attaching sphere} $S^{k-1}\times \{0\}\subset \partial_{-\delta}H_{k}$, and that $\{0\}\times D^{2n-k}\subset H_{k}$ is the \emph{co-core disk} and $\{0\}\times S^{2n-k-1}\subset \partial_{\delta} H_{k}$ the \emph{co-core sphere}.

The induced contact form on $\partial_{\pm\delta} H_k$ is the restriction of the form
\[ 
\alpha_{k}=\sum_{j=1}^{k}(2x_{j}dy_{j}+y_{j}dx_{j})+\sum_{j=k+1}^{n}\frac 12(x_{j}dy_{j}-y_{j}dx_{j}).
\] 
The corresponding Reeb vector field $R_{k;\pm\delta}$ along $\partial_{\pm \delta} H_{k}$ is
\[ 
R_{k;\pm\delta}= \frac{1}{N(x,y)}\left(\sum_{j=1}^{k}(2x_{j}\partial_{y_{j}}+y_{j}\partial_{x_{j}})+\sum_{j=k+1}^{n} \frac 12(x_{j}\partial_{y_{j}}-y_{j}\partial_{x_{j}})\right),
\]
where
\[ 
N(x,y)=\sum_{j=1}^{k}(4x_{j}^{2}+y_{j}^{2})+\sum_{j=k+1}^{n}\frac 14(x_{j}^{2}+y_{j}^{2}).
\] 
It follows in particular that any Reeb orbit in $\partial_{\delta} H_{k}$ lies in the middle of the handle, i.e., at $(x_{j},y_{j})=(0,0)$ for $j=1,\dots,k$, where the Reeb flow agrees with the standard Reeb flow on the $(2(n-k)-1)$-dimensional sphere.

Along $\partial_\delta H_k$, we pick an almost complex structure $J_0$ which takes $Z_{k}$ to $R_{k}$. We extend $J_0$ as an almost complex structure $J$ over $H_{k}$ that leaves the contact planes in $\partial_{\pm\delta} H_{k}$ invariant for all $\delta>0$ and agrees with a Liouville invariant almost complex structure on the symplectization ends built on $\partial_{\pm\delta}H_{k}$ for any $\delta>0$. We call such an almost complex structure \emph{handle adapted}.

An arbitrary Weinstein $2n$-manifold $W$ admits a handle decomposition $H$, which can be described as follows in terms of an exhausting function $\phi\colon W\to [0,\infty)$. The function $\phi$ has $d$, $1\le d\le n$ critical levels $0< \delta_{1}<\dots<\delta_{d}<\infty$. For $0<\delta<\delta_{1}$, $\phi^{-1}([0,\delta))$ is isomorphic to the $2n$-ball, $H_{0}$ in the notation above. Furthermore, for $\delta_{k-1}<\delta'<\delta_{k}<\delta<\delta_{k+1}$, $\phi^{-1}([0,\delta])$ is obtained from $\phi^{-1}([0,\delta'))$ by attaching a finite number of $k$-handles $H_{k}$ along an isotropic link $\partial L_{k}$ of $(k-1)$-dimensional isotropic attaching spheres in its contact boundary, $\phi^{-1}(\delta')$. If $d<n$ then we say that $W$ is subcritical. 

Our next result controls Reeb orbits in the boundary of a subcritical manifold. We consider a parameter $\eta$ that controls the size of the attaching regions of the handles in $W_{0}$. The link of isotropic attaching spheres $\partial L_{k-1}$ has a neighborhood of the form $\partial L_{k-1}\times D^{2n-k}$. We let $\eta>0$ control the size of these attaching regions and write $\partial W_{\eta}$ for the boundary of the Weinstein domain with $\eta$-sized handles. Using the Liouville flow in local models it is clear that if $\eta'<\eta$ then there is a topologically trivial symplectic cobordism with positive end $\partial W_{\eta}$ and negative end $\partial W_{\eta'}$ which can be taken to be standard in the middle of each handle.

\begin{lemma}
	For generic attaching spheres the following holds.	
	For any $\mathfrak{a}>0$ there exists $\eta_{0}>0$ such that for all $\eta<\eta_{0}$, any Reeb orbit of $\partial W_{\eta}$ of action $<\mathfrak{a}$ lies in the middle of some handle. Furthermore, for all $\eta<\eta'<\eta_{0}$ rigid holomorphic cylinders in the cobordism gives a natural one to one correspondence between Reeb orbits of action $< \mathfrak{a}$ in $\partial W_{\eta'}$ and $\partial W_{\eta}$. Finally, there exists a contact form on $\partial W_{\eta}$, arbitrarily close to the standard contact form in the middle of each handle such that the minimal Conley--Zehnder index of an orbit in a $k$-handle is $(n-k)+1$.	
\end{lemma}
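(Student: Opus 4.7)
For the first claim, I localize Reeb orbits using the explicit model of $H_{k}$. From the formula for $R_{k;\delta}$, the functions $x_{j}^{2}-\tfrac{1}{2}y_{j}^{2}$ for $j\leq k$ are Reeb-invariant and (for non-zero value) cut out non-compact hyperbolas. Hence any Reeb trajectory with $(x_{j},y_{j})\neq (0,0)$ for some $j\leq k$ must exit the handle in finite time and cannot close up, so the only closed orbits inside $\partial_{\delta}H_{k}$ lie on the middle sphere $\{x_{j}=y_{j}=0:j\leq k\}\cap\partial_{\delta}H_{k}\approx S^{2(n-k)-1}$, with action scaling linearly in $\eta$. In the complement of the handles, the Reeb dynamics is essentially that of a fixed contact form on $\partial W$; for generic attaching data one arranges that closed orbits traversing an attaching region have action bounded below independent of $\eta$, and choosing $\eta_{0}$ small enough separates middle-handle orbits of action $<\mathfrak{a}$ from all other orbits, which then have action $\ge \mathfrak{a}$.

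For the second claim, I build the cobordism from $\partial W_{\eta'}$ to $\partial W_{\eta}$ as a topologically trivial Weinstein cobordism obtained by shrinking each handle via the Liouville flow in the local model. In the middle of each handle the cobordism is standard, and the short orbits at the two ends correspond via radial scaling with trivial pseudoholomorphic cylinders as the interpolating rigid curves. SFT compactness, the action bound from part~(1), and the handle-adapted almost complex structure (which preserves the contact planes in each horizontal slice) confine any rigid cylinder with short asymptotics to the middle region, so these trivial cylinders exhaust the list of rigid cylinders between orbits of action $<\mathfrak{a}$, yielding the natural bijection.

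For the third claim, I perturb the contact form near each middle sphere $S^{2(n-k)-1}$ by $e^{\varepsilon f}\alpha$, with $f$ pulled back from a Morse function on the base $\C P^{n-k-1}$ of the Hopf fibration and $\varepsilon>0$ small. Each critical point $p$ of $f$ yields a non-degenerate orbit near a Hopf fiber, and the standard Morse--Bott formula for the Conley--Zehnder index
\[
\CZ(\gamma_{p}) = \mu_{\mathrm{RS}}(\gamma_{\mathrm{MB}}) + \operatorname{ind}_{\mathrm{Morse}}(p) - \tfrac{1}{2}\dim \C P^{n-k-1},
\]
combined with the contribution from the $2k$ hyperbolic directions of the handle, gives the total index. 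A direct linearization shows that the hyperbolic directions are positively hyperbolic with positive real eigenvalues, contributing $0$ to the Robbin--Salamon index in the relevant convention; the minimum over $p$ (attained at the minimum of $f$, Morse index $0$) then yields $(n-k)+1$. The main obstacle is the careful bookkeeping of conventions for the Robbin--Salamon and Conley--Zehnder indices and their interaction with the cohomological grading used elsewhere in the paper, together with ensuring the perturbation is small enough to not disturb the localization from part~(1).
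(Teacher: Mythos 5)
Your proposal is correct and follows essentially the same route as the paper, which localizes short orbits to the handle middles and then invokes standard properties of the (perturbed) contact sphere $S^{2(n-k)-1}$ sitting there; in fact you supply more detail than the paper does for the trivial-cylinder correspondence and the Conley--Zehnder computation. The one step you assert rather than justify is why genericity yields the action bound at the attaching regions: the paper's reason is that a subcritical isotropic sphere (of dimension $k-1\le n-2$) generically has no Reeb chords by a dimension count, so for any $\mathfrak{a}$ the return time of Reeb trajectories starting in an $\eta$-neighborhood of the attaching locus exceeds $2\mathfrak{a}$ once $\eta$ is small, which is what forces every short closed orbit into the middle of a handle.
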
   

\begin{proof}
	A subcritical isotropic sphere generically has no Reeb chords. Therefore for any $\mathfrak{a}>0$ there exists $\eta$ such that any Reeb flow line starting in an $\eta$-neighborhood of the sphere returns only if it has action $>2\mathfrak{a}$. It follows from this that all Reeb orbits must lie inside the handles. The claim then follows from well-know properties of the standard contact sphere, which sits in the middle of each handle.  
\end{proof}

We call contact forms on subcritical manifolds of the form above \emph{index definite}. Note that if $\Lambda\subset \partial W$ is a Legendrian sphere in the \emph{index definite} boundary of a subcritical Weinstein manifold of dimension $2n>2$, then its dg-algebra can be computed without any anchoring: punctured spheres in $W$ have minimal dimension $n-(n-1)+1+(n-3)=n-1>0$, see e.g., \cite[Appendix A]{CEL} for the well-known dimension formula. 

\subsection{Reeb dynamics and attaching spheres of $V$-handles}
Let $V$ be a Weinstein $(2n-2)$-manifold with handle decomposition $h$. Consider the contactization of $V$, i.e., the contact manifold $\R\times V$ with contact form $d\zeta+\lambda$, where $\zeta$ is a linear coordinate on $\R$. We will construct a cobordism $V\times T^{\ast} [-1,1]$ with negative end $(\R\times V)\times \{-1,1\}$ and positive end the contact manifold obtained by removing $((-\epsilon,\epsilon)\times V)\times \{-1,1\}$ from $(\R\times V)\times \{-1,1\}$ and joining the boundaries by $(V\times S^{\ast}_{\epsilon} [-1,1])\cup \left(\partial V\times D^{\ast}_{\epsilon}I|_{\{-1,1\}}\right)$, where $S^{\ast}_{\epsilon} [-1,1]$ and $D^{\ast}_{\epsilon} [-1,1]$ denotes the $\epsilon$-sphere and $\epsilon$-disk cotangent bundles, respectively. The cobordism will furthermore come with a natural handle decomposition induced by $h$. In order to get a more precise description we will use an explicit model of the handle that we describe next.

Consider symplectic $\R^{2}$ with coordinates $(x,y)$ and symplectic form $dx\wedge dy$. Consider the product $V\times\R^{2}$ with symplectic form $\omega=d\lambda+dx\wedge dy$. Then 
\[ 
Z=z + 2x\partial_{x}-y\partial_y
\]
is a Liouville vector field for $\omega$ and we consider $H_{\delta}(V)\approx V\times\R^{2}$ as an exact symplectic cobordism with positive and negative ends the contact hypersurfaces
\begin{equation}\label{eq:cobGdelta} 
G_{\pm\delta}(V)=\{(x,y,v)\in\R^{2}\times V\colon x^{2}-\tfrac12 y^{2}+\phi(v)=\pm\delta\}.
\end{equation}
The induced contact form on $G_{\delta}(V)$ is
\[ 
\alpha_{\pm\delta}=(2xdy+ydx+\lambda)|_{G_{\pm\delta}}
\]
and the corresponding Reeb vector field
\begin{equation}\label{eq:Reebhandle} 
R_{\pm\delta}=\beta(x,y,v)(2x\partial_{y}+y\partial_{x})+r_{\lambda}(x,y,v),
\end{equation}
where $\beta(x,y,v)$ is a smooth function, non-zero if $(x,y)\ne (0,0)$ and $r_{\lambda}$ is a smooth vector field in the kernel of $d\lambda|_{\{\phi(v)=-x^{2}+ \tfrac 12y^{2}\pm\delta\}}$ at points where $z\ne 0$ and equal to zero where $z=0$. 
\begin{lemma}\label{l:Reebinhandle}
The Reeb vector field $R_{\delta}$ has the following properties. The subset of $G_{\delta}$ given by $(x,y)=(0,0)$ and $\phi(v)=\delta$ is invariant under the flow of $R_{\delta}$ and along this subset $R_{\delta}$ agrees with the Reeb vector field of $\partial V$. 
\end{lemma}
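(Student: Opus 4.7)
The plan is to prove both claims by a direct local computation at an arbitrary point $p=(0,0,v)\in\Sigma$, where $\Sigma:=\{(0,0,v)\in G_\delta(V):\phi(v)=\delta\}$. I first invoke the freedom, noted in the first paragraph of Section~\ref{sec:handle_decomp_V}, to take the symplectization radius $\rho$ arbitrarily small and arrange $\rho<\delta$. Then $\phi^{-1}(\delta)\subset V$ lies inside the symplectization collar $[0,\infty)\times\partial V$ and is contactomorphic to $(\partial V,\lambda|_{\partial V})$ via the flow of $z$. This identifies $\Sigma$ with $\partial V$ and allows a direct comparison of Reeb fields across the two sides.

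Next I would set up the infinitesimal data at $p$. Since $G_\delta$ is a regular level set of $f(x,y,v)=x^{2}-\tfrac12 y^{2}+\phi(v)-\delta$ and $df|_p=d\phi|_v$, one obtains
\[
T_p G_\delta \;=\; \R\partial_x\,\oplus\,\R\partial_y\,\oplus\,T_v\phi^{-1}(\delta).
\]
At $p$ the one-form $2x\,dy+y\,dx$ vanishes, so $\alpha_\delta|_p$ acts as $\lambda|_{T_v\phi^{-1}(\delta)}$ on the $V$-summand and vanishes on the $\R^2$-summand. A global computation gives $d\alpha_\delta=dx\wedge dy+d\lambda$.

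The Reeb direction is then pinned down from $\iota_{R_\delta}d\alpha_\delta\bigl|_{T_p G_\delta}=0$. For a candidate $X=a\partial_x+b\partial_y+w$ with $w\in T_v\phi^{-1}(\delta)$, the contraction equals $a\,dy-b\,dx+\iota_w d\lambda$. Evaluating on an arbitrary test vector $c\partial_x+d\partial_y+w'\in T_p G_\delta$ yields $ad-bc+d\lambda(w,w')$. Taking $w'=0$ and varying $c,d$ forces $a=b=0$; then varying $w'\in T_v\phi^{-1}(\delta)$ forces $w$ into $\ker\bigl(d\lambda|_{T_v\phi^{-1}(\delta)}\bigr)$, i.e.\ along the Reeb direction of $\lambda|_{\phi^{-1}(\delta)}$. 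The normalization $\alpha_\delta(R_\delta)=1$ then identifies $R_\delta|_p$ with the Reeb vector field of $\partial V$ at $v$ under the Liouville identification.

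This simultaneously yields both assertions: $R_\delta|_\Sigma$ has vanishing $\partial_x$ and $\partial_y$ components and its $V$-component lies in $T_v\phi^{-1}(\delta)=T_v\Sigma$, so $\Sigma$ is invariant under the Reeb flow; and along $\Sigma$ the Reeb field literally equals the Reeb field of $\partial V$ under the identification. The step requiring most care is not conceptual but bookkeeping around the decomposition \eqref{eq:Reebhandle}: on $\phi^{-1}(\delta)$ the Liouville field $z$ is nonzero, so the summand $r_\lambda$ is genuinely the Reeb vector field of $\lambda|_{\phi^{-1}(\delta)}$ rather than being forced to zero by the $z=0$ clause, and this matches the conclusion of the calculation.
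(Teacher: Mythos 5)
Your proof is correct and reaches the same conclusion by the same underlying mechanism: at $(x,y)=(0,0)$ the $\R^2$-part of the contact form degenerates, so the Reeb field has no $\partial_x,\partial_y$ component and reduces to the Reeb field of $\lambda$ on the level set $\phi^{-1}(\delta)\cong\partial V$. The paper simply cites the displayed formula \eqref{eq:Reebhandle} for $R_{\pm\delta}$ and calls the lemma clear; your argument verifies that formula at the relevant points directly from $\iota_{R_\delta}d\alpha_\delta=0$ and $\alpha_\delta(R_\delta)=1$, which is the computation the paper leaves implicit.
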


\begin{proof}
	Clear from \eqref{eq:Reebhandle}.
\end{proof}

Now take $V=V_{0}$, the subcritical part of $V$. Write $l=\bigcup_{j} l_{j}$ for the core-disks of the top handles $h_{j}$ in $h$ and $\partial l\subset \partial V_{0}$ for their boundaries which are the attaching $(n-2)$-spheres for $h_{j}$. Consider 
\[ 
\overline{\partial l}=\left\{(x,y,v)\in\R^{2}\times V_{0}\colon x=0, v\in \partial l\times[0,\infty), (0,y,v)\in G_{\delta}(V_{0}) \right\}.
\]
It has the following properties (below the (contact homology) \emph{grading} of a Reeb orbit is the dimension of the moduli space of holomorphic planes with positive asymptotic at the orbit).
\begin{lemma}\label{l:middlesubcritLeg}
	The subset $\overline{\partial l}\subset G_{\delta}$ is a smooth Legendrian submanifold of topology $\partial l\times \R$. The Reeb chords of $\overline{\partial l}$ all lie over $(x,y)=0$ and are in natural 1-1 grading preserving correspondence with the Reeb chords of $\partial l\subset\partial V_{0}$. Furthermore, the Reeb orbits in $G_{\delta}$ are in natural 1-1 correspondence with Reeb orbits in $\partial V_{0}$, where the grading of an orbit in $G_{\delta}$ is one above the grading of the corresponding orbit in $\partial V_{0}$. 
\end{lemma}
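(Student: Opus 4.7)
The proof has three steps: verify the Legendrian property, trap Reeb chords and orbits over $(x,y)=(0,0)$, and compare gradings.

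\emph{Legendrian property.} Parametrize $\overline{\partial l}$ by the map $(u,y)\in\partial l\times\R\mapsto(0,y,\tau(u,y))$, where $\tau(u,y)$ is the unique point on $\{u\}\times[0,\infty)\subset \partial l\times[0,\infty)\subset V_{0}$ with $\phi(\tau(u,y))=\delta+y^{2}/2$; this is a smooth embedding. Pulling back $\alpha_\delta=2x\,dy+y\,dx+\lambda$ to $\overline{\partial l}$: the first two terms vanish since $x\equiv 0$ on $\overline{\partial l}$ (so $dx$ also pulls back to $0$), and $\lambda|_{\partial l\times[0,\infty)}=0$ because $\partial l\times[0,\infty)$ is an exact Lagrangian in $V_{0}$ with vanishing potential, being the Liouville lift of the Legendrian $\partial l\subset\partial V_{0}$.

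\emph{Trapping.} Along integral curves of $R_\delta=\beta(2x\partial_{y}+y\partial_{x})+r_\lambda$, the function $y^{2}-2x^{2}$ is conserved since $\tfrac{d}{dt}(y^{2}-2x^{2})=2y(2\beta x)-4x(\beta y)=0$ and $r_\lambda$ lies in the $V_{0}$-directions. Equivalently, $r_\lambda$ preserves level sets of $\phi$, and the defining equation of $G_\delta$ then forces conservation of $y^{2}-2x^{2}$. A trajectory starting at $(0,y_{0})$ with $y_{0}\neq 0$ has $y^{2}=y_{0}^{2}+2x^{2}\geq y_{0}^{2}$, so $y$ retains its sign, and then $\dot x=\beta y$ has constant non-zero sign, causing $x$ to leave $0$ monotonically; thus $x$ cannot return to $0$. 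Consequently every Reeb chord of $\overline{\partial l}$ and every Reeb orbit of $G_\delta$ lies entirely over $(x,y)=(0,0)$. On the slice $\{(0,0)\}\times\phi^{-1}(\delta)$, canonically identified with $\partial V_{0}$, the restriction of $R_\delta$ is a positive rescaling of the Reeb field of $\partial V_{0}$ and $\overline{\partial l}$ restricts to $\partial l$; this yields the claimed bijections for both chords and orbits.

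\emph{Gradings.} At $(0,0,v)$ the contact distribution $\xi_\delta=\ker\alpha_\delta$ splits as $T_{(0,0)}\R^{2}\oplus\xi_{\partial V_{0}}|_v$ compatibly with $d\alpha_\delta=dx\wedge dy+d\lambda$, and the linearized Reeb flow respects this splitting: on the $\R^{2}$-factor the generator is the positive hyperbolic matrix $\beta(t)\bigl(\begin{smallmatrix}0&1\\2&0\end{smallmatrix}\bigr)$ with eigenvalues $\pm\sqrt{2}\beta(t)$, while on the $\xi_{\partial V_{0}}$-factor it agrees with the linearized Reeb flow on $\partial V_{0}$. For a chord, $T\overline{\partial l}\cap T\R^{2}$ is the Lagrangian line $\partial_{y}$ at both endpoints; a direct computation shows that the Maslov index of a positive hyperbolic symplectic path with this fixed Lagrangian boundary condition is $0$. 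Hence the chord Conley--Zehnder index in $G_\delta$ equals the one in $\partial V_{0}$, and the chord grading is preserved. For an orbit, the positive hyperbolic linearized return map on the $\R^{2}$-factor also contributes $0$ to the Conley--Zehnder index; since the grading of an orbit in a contact manifold of dimension $2m-1$ is $\mu_{\CZ}+(m-3)$, and $\dim G_\delta-\dim\partial V_{0}=2$, the orbit grading in $G_\delta$ is one higher than in $\partial V_{0}$.

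The main obstacle is the grading step: justifying that the linearized Reeb flow splits (to the order relevant for $\mu_{\CZ}$) into the two blocks above, and evaluating the $0$-contribution of the hyperbolic $T\R^{2}$ block to the Maslov index (for chords) and to the Conley--Zehnder index (for closed orbits). Both facts are standard properties of positive hyperbolic paths in $\mathrm{Sp}(2,\R)$, but need to be explicitly combined with the product geometry of $G_\delta$ along the invariant slice $(x,y)=(0,0)$.
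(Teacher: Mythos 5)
Your proposal is correct and follows essentially the same route as the paper: trapping the $(x,y)$-coordinate via the conserved quantity $2x^{2}-y^{2}$ (equivalently, non-zero speed along its level curves), and then observing that the hyperbolic block in the linearized Reeb flow contributes zero to the Maslov/Conley--Zehnder index, with the orbit grading shift of $+1$ coming purely from the dimension-dependent normalization $\mu_{\CZ}+(m-3)$. The only cosmetic difference is that you spell out the Legendrian verification and phrase the trapping for trajectories starting at $x=0$; for closed orbits one applies the same monotonicity argument from an arbitrary starting point $(x_{0},y_{0})\neq(0,0)$, which your conservation law gives immediately.
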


\begin{proof}
	With Reeb vector field $R_{\delta}$ as in \eqref{eq:Reebhandle}, the $(x,y)$-coordinate of any Reeb flow line not starting at $(x,y)=0$, moves along the curves $\{2x^{2}-y^{2}=\mathrm{const}\}$ with non-zero speed, and the $(x,y)$-coordinate of flow lines starting at $(x,y)=(0,0)$ is fixed. Since the restriction of $R_{\delta}$ to $(x,y)=0$ agrees with the Reeb vector field in $\partial V$ it follows that Reeb chords are in 1-1 correspondence as claimed. It remains to show that this correspondence is index preserving.
	
	Consider a Reeb chord $c$ of $\partial l\subset \partial V_{0}$. The degree $|c|$ of $c$ is $-|c|=\mu(c)-1$, where $\mu(c)$ is the Maslov index of the tangent planes to the Legendrian along a path from the top end point of $c$ to the bottom endpoint, followed by the linearized Reeb flow along the chord and finally closed up by a positive rotation along the K\"ahler angle. Including the chord as a chord of $\overline{\partial l}$, the only change comes from the linearized Reeb flow in the additional $\R$-direction. The Reeb vector field is $y\partial_{x}$ which means that $y$-axis at the bottom endpoint arrives rotated slightly in the negative and closed up by a small positive rotation this contributes zero to the Maslov index. It follows that the gradings of $c$ as a chord of $\partial l$ and $\overline{\partial l}$ agree.
	
	The statement on Reeb orbits is similar. The difference in grading arises as follows: as with the Maslov index of the chord, the Conley--Zehnder index $\mathrm{CZ}$ of an orbit does not change from $\partial V$ to $G_{\delta}$. Since the grading of an orbit is $\mathrm{CZ}+(n-3)$, where the dimension of the contact manifold is $2n-1$, the grading then increases by one.   
\end{proof}

Consider now adding $V_{0}\times D^{\ast}_{\epsilon}[-1,1]$ as above to $V\times\R\times\partial [-1,1]$ along $V_{0}\times(-\epsilon,\epsilon)$. Smoothing the union of the two copies of $l$ in $V\times\R\times\partial I$ and $\overline{\partial l}$, we get a collection of Legendrian spheres $\Sigma(h)$ in the upper contact boundary.

\begin{lemma}\label{l:conehandles}
	The Legendrian spheres $\Sigma(h)$ are attaching spheres for $V\times\R^{2}$ with Liouville field as in the cobordism with positive end $G_{\delta}$ and negative end $G_{-\delta}$, see \eqref{eq:cobGdelta}. 
\end{lemma}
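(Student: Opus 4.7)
The plan is to exploit the product structure of the Liouville field $Z = z + 2x\partial_x - y\partial_y$ on $V\times\R^{2}$. Its zeros are exactly the points $(p,0,0)$ with $p$ a critical point of $\phi$, and the Morse index at $(p,0,0)$ equals $\mathrm{ind}_\phi(p) + 1$. Thus critical points of $\phi$ of index $n-1$ (those corresponding to top handles $h_j$ of $V$) yield index-$n$ critical points in $V\times\R^{2}$, while every other zero of $Z$ gives a subcritical handle. Hence the Weinstein handle decomposition of the cobordism $V\times\R^{2}$ from $G_{-\delta}$ to $G_{\delta}$ consists of a subcritical cobordism plus one critical $n$-handle per top handle $h_j$ of $V$.

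For each top critical point $(p_j,0,0)$, I would compute the core disk of the associated $n$-handle as the stable manifold of $+Z$; by the product structure this is $l_j\times\{x=0\}$, a Lagrangian $n$-disk. The attaching $(n-1)$-sphere is then the intersection of this core with the upper boundary of the subcritical cobordism. I would next identify the subcritical cobordism in $V\times\R^{2}$ with the one constructed in the paper — namely $V_{0}\times D^{\ast}_{\epsilon}[-1,1]$ attached to the two contactizations $V\times\R\times\{-1,1\}$. Both are subcritical Weinstein cobordisms with the same negative end: the two components of $G_{-\delta}$ (the sheets $y>0$ and $y<0$) correspond precisely to the two contactization ends, and the handle data from $V_0$ matches under the product description.

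Under this identification, the intersection of the core $l_j\times\{x=0\}$ with the upper boundary of the subcritical cobordism decomposes into two copies of $l_j$ sitting in the contactization remnants $V\times\R\setminus(V_{0}\times(-\epsilon,\epsilon))$ at opposite signs of the $\R$-coordinate, joined across the handle by a cylinder $\partial l_j\times[-1,1]$ along $\{x=0\}$. By the definition preceding Lemma \ref{l:middlesubcritLeg}, this cylinder is $\overline{\partial l_j}$, and Legendrian-smoothing at the two transition circles produces exactly $\Sigma(h_j)$.

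The main technical step will be verifying the Weinstein equivalence of the two subcritical cobordism descriptions used above. Concretely, one needs to interpolate between the handle-model Liouville field $Z = z + 2x\partial_x - y\partial_y$ on $V\times\R^{2}$ and the field glued up from the local Liouville on $V_{0}\times D^{\ast}_{\epsilon}[-1,1]$ and the standard Liouville on each contactization, without introducing new zeros. Once this interpolation is in place, the Liouville flow transports the attaching spheres of the top $n$-handles in the handle-model description onto the Legendrian link $\Sigma(h)$ as constructed.
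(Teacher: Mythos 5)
Your proposal is correct and follows essentially the same route as the paper: both read off the handle decomposition of $V\times\R^{2}$ from the product Liouville field $Z=z+2x\partial_{x}-y\partial_{y}$, whose zeros sit at $(p,0,0)$ with index $\operatorname{ind}_{\phi}(p)+1$, and identify the attaching spheres of the resulting critical $n$-handles with $\Sigma(h)$ by intersecting the cores $l_{j}\times\{x=0\}$ with the boundary of the subcritical piece. The only difference is bookkeeping: where you propose an interpolation of Liouville fields to match the two descriptions of the subcritical cobordism, the paper simply normalizes the exhausting function (subcritical values below $\tfrac12\delta$, top critical values at $\tfrac32\delta$) so that the level set at $\delta$ directly realizes the contact manifold carrying $\Sigma(h)$.
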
	

\begin{proof}
	Consider an exhausting function $\phi$ for $V$ such that $V_{0}=\phi^{-1}([0,\tfrac12\delta])$ with critical level for critical points of maximal index at $\tfrac32\delta$. Then the level set $\delta$ gives the contact manifold with attaching spheres $\Sigma(h)$. The critical points in the core disks lie at the level $\tfrac32\delta$ and sub-level sets of levels $>\frac32\delta$ give $V\times\R^{2}$.
\end{proof}

\subsection{Holomorphic curves in $V$-handles}\label{ssec:holcurveinhandle}
We next consider almost complex structures on $H_{\delta}(V)$. Let $J_{V}$ be a handle adapted almost complex structure on $V$, see Section \ref{sec:Weinhand}. Write $z$ for the Liouville vector field of $V$ and let $r=J_{V}z$. We think of $H'_{\delta}(V)=H_{\delta}(V)\setminus (\{x=0\}\times \mathrm{skeleton}(V))$ as the symplectization of $\partial H_{\delta}$. We will define an almost complex structure on $G_{\delta}$ that takes the Liouville vector field to the Reeb vector field and then extend it to all of $H_{\delta}'(V)$ by translation along the Liouville vector field. 

Consider the tangent space to $G_{\delta}(V)$. 
\begin{itemize}
\item At $(x,y)=0$, the tangent space is spanned by $\ker d\phi$, $\partial_{x}$, and $\partial_{y}$. We define $J$ as $J_{V}$ on $TV$ and $J\partial_{x}=\partial_{y}$.  
\item At points $(x,y,v)\in G_{\delta}(V)$, where $v$ is not a critical point of $\phi$ and $(x,y)\ne 0$, the tangent space is the sum of the $(2n-3)$-dimensional space $\ker(d\phi)$ and the one-dimensional subspaces generated by vector field $(\sqrt{x^{2}+y^{2}})^{-1}(y\partial_{x}+2x\partial_{y})$ and the vector field $-(4x^{2}+y^{2})z+d\phi(z)(2x\partial_{x}-y\partial_{y})$. Here we define $J$ as the restriction of $J_{V}$ on the contact hyperplane in $\ker\phi$ and 
\[ 
J z=\frac{1}{\omega(z,r)+4x^{2}+y^{2}} r,\quad J(2x\partial_{x} - y \partial_y)=\frac{1}{\omega(z,r)+4x^{2}+y^{2}} (2x \partial_y + y \partial_x).
\] 
\item At points $(x,y,v)$ where $v$ is critical for $\phi$ we define $J$ to equal $J_{V}$ on $TV$ and $J(2x\partial_{x}-y\partial_{y})=\frac{1}{4x^{2}+y^{2}}(2x\partial_{y}+y\partial_{x})$. (Note that, $v$ critical and $(x,y,v)\in G_{\delta}$ imply $(x,y)\ne 0$.)
\end{itemize}
Since $J_{V}$ is handle adapted it follows that $J$ is smooth. We finally extend $J$ by translation along the Liouville vector field. Let $\pi\colon H_{\delta}\to\R^{2}$ be the projection to the $(x,y)$-plane.

\begin{lemma}\label{l:complexproj}
The hypersurfaces $V_{(x,y)}=\pi^{-1}(x,y)$ are $J$-complex. In particular, any holomorphic curve in the symplectization of $G_{\delta}$ not contained in $V_{(x,y)}$ intersects $V_{(x,y)}$ positively and the intersection number is locally constant in $(x,y)$.
\end{lemma}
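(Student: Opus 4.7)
The plan is to reduce $J$-invariance of $TV_{(x,y)}$ everywhere on $H_\delta'(V)$ to a check at points of $G_\delta$. The Liouville flow of $Z = z + 2x\partial_x - y\partial_y$ preserves the fibration $\pi$: explicitly, $F^Z_t(v,x,y) = (F^z_t(v), e^{2t}x, e^{-t}y)$ carries $V_{(x,y)}$ diffeomorphically onto $V_{(e^{2t}x, e^{-t}y)}$, and $dF^Z_t$ sends vertical vectors (those with no $\partial_x,\partial_y$-component) to vertical vectors. Since $J$ is defined to be Liouville-invariant on $H_\delta'(V)$, it is enough to establish $J$-invariance of the vertical subspace $T_v V$ at each $(v,x,y) \in G_\delta$.

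I would then proceed case by case following the piecewise definition of $J$ on $G_\delta$. At $(x,y)=0$, and at a critical point $v$ of $\phi$, $J$ agrees with $J_V$ on $TV$ by construction, so there is nothing to check. The substantive case is $(x,y)\neq 0$ with $v$ noncritical. Here I would use the decomposition
\[
T_v V = (\ker\lambda \cap \ker d\phi) \oplus \R\{z\} \oplus \R\{r\}, \qquad r = J_V z,
\]
and verify that this is a direct sum of dimensions $(2n-4)+1+1$: the inclusion $r\in \ker d\phi$ follows from $d\phi(r) = d\lambda(z,J_V r) = -d\lambda(z,z) = 0$, while $d\phi(z)>0$ and $\lambda(r) = d\lambda(z,J_V z)>0$ witness the complementarity. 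On the first summand, $J$ is by definition $J_V$, which preserves $TV$. On the plane $\R\{z\}\oplus\R\{r\}$, the prescription $Jz = c^{-1}r$ with $c = \omega(z,r) + 4x^2 + y^2$ forces $Jr = -c\,z$ by $J^2 = -\id$, and both lie in $T_v V$. Hence $T_v V$ is $J$-invariant.

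Given $J$-complexness of the codimension-two submanifold $V_{(x,y)}$, positivity of intersection is immediate from the Micallef--White representation theorem (or McDuff's positivity of intersections): a $J$-holomorphic map whose image is not contained in $V_{(x,y)}$ meets it in isolated points, each contributing a strictly positive local multiplicity. Local constancy in $(x,y)$ then follows because $\{V_{(x,y)}\}$ is a smooth two-parameter family, each local multiplicity is stable under small perturbations of $(x,y)$, and a finite sum of positive integers cannot change continuously.

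The main obstacle is the linear algebra in the nondegenerate case: keeping straight where $z$, $r$, and the contact hyperplane sit inside $T_v V$, and checking that the formulas prescribed for $J$ really restrict to an endomorphism of $T_v V$. Everything else is either tautological (the critical and origin cases, and the Liouville extension) or a direct application of the standard positive-intersection and continuity machinery.
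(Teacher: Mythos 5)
Your proposal is correct and follows essentially the same route as the paper: the paper's one-line proof likewise observes that the splitting $TH_{\delta}(V)=TV\oplus T\R^{2}$ is preserved by the Liouville flow of $Z=z+2x\partial_{x}-y\partial_{y}$ and that $J$ preserves $TV_{(x,y)}$ by construction, with positivity of intersections then being standard. Your case-by-case verification that the prescribed formulas really do restrict to an endomorphism of $T_{v}V$ (in particular $Jr=-c\,z$ forced by $J^{2}=-\id$) is simply a careful unwinding of the paper's phrase ``by definition.''
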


\begin{proof}
The splitting $T H_{\delta}(V)=TV\oplus T\R^{2}$ is preserved along the flow lines of the Liouville field $Z=z+2x\partial_{x}-y\partial_{y}$ and $J$ takes $TV_{(x,y)}$ to $TV_{(x,y)}$ by definition. 
\end{proof}

\begin{remark}
The above construction gives an almost complex structure in the symplectization of the positive end of the symplectic cobordism $H_{\delta}(V)$, represented by $H'_{\delta}(V)$. When we study curves in the cobordism rather than in the symplectization we extend the almost complex structure from a cut off version of the symplectization to the compact part of the cobordism by interpolating between the symplectization complex structure and an almost complex structure in the compact part over a finite interval in the Liouville direction near the cut-off.
\end{remark}

We next consider holomorphic curves in the $V_{0}$-handle. Recall that we have an index definite Weinstein structure on $V_{0}$. For $\dim(V_{0})=2n-2>2$ this means that the minimal grading of any orbit is $1$ and therefore the dg-algebra of $\partial l\subset\partial V_{0}$ is defined without anchoring. We start with this case, where there will be a canonical identification of dg-algebras. In the lowest dimensional case, $\dim(V_{0})=2$ the basic orbit has grading $0$ and its effects must be taken into account, see Remarks \ref{r:dimV_0=2} and \ref{r:dimV_0=2discussion}. 

\begin{lemma}\label{l:middlesubcrit}
	Assume $n>2$. For $\mathfrak{a}>0$, any $J$-holomorphic curve in the symplectization $\R\times G_{\delta}$ with boundary on $\R\times \overline{\partial l}$ and with positive puncture at a Reeb chord over $y=0$ corresponding to a Reeb chord of $\partial l$ of action $<\mathfrak{a}$, lies entirely over $(x,y)=0$. Also, there is a natural 1-1 correspondence between such rigid holomorphic disks and rigid $J_{V_{0}}$-holomorphic disks in $\R\times \partial V_{0}$ with boundary on $\R\times\partial l$.
\end{lemma}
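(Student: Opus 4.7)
The plan is to project $u$ to the $\R^2$-factor of $H_\delta(V) = V \times \R^2$ and, via a Stokes argument with primitive $x\,dy$ of $\omega_{\R^2} = dx\wedge dy$, to force the projection $\pi \circ u$ to be the constant $(0,0)$. The restriction of $u$ to the slice over $(0,0)$ then becomes a $J_{V_0}$-holomorphic disk in $\R\times\partial V_0$.

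First I would locate every asymptotic of $u$ in the $\R^2$-direction. By Lemma \ref{l:middlesubcritLeg} all Reeb chords of $\overline{\partial l}$ lie over $(x,y)=(0,0)$. For closed Reeb orbits, formula \eqref{eq:Reebhandle} shows that the $\R^2$-component of $R_\delta$ preserves the non-compact hyperbolas $\{2x^2-y^2=c\}$ and is non-vanishing away from the origin, so the $(x,y)$-coordinate of any flow line starting off the origin moves monotonically along such a hyperbola and never returns. Hence every Reeb orbit sits over $(x,y)=(0,0)$ as well.

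Next, a direct inspection of the formulas in Section \ref{ssec:holcurveinhandle} shows that $J$ preserves the splitting $TH_\delta = TV\oplus T\R^2$ and that $J|_{T\R^2}$ is tamed by $\omega_{\R^2}$. Consequently $f := \pi\circ u$ is pseudoholomorphic for an almost complex structure on $\R^2$ tamed by $\omega_{\R^2}$, so $f^*\omega_{\R^2}\ge 0$ pointwise. Since $\omega_{\R^2} = d(x\,dy)$, Stokes' theorem writes $\int_\Sigma f^*\omega_{\R^2}$ as the sum of the boundary integral $\int_{\partial\Sigma} f^*(x\,dy)$ and the puncture residues $\lim_{r\to 0}\oint_{|z-p|=r} f^*(x\,dy)$. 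The boundary term vanishes because $x \equiv 0$ on $\overline{\partial l}$, and the puncture residues vanish since $x \to 0$ at every puncture by the previous step. Therefore $f^*\omega_{\R^2}\equiv 0$, forcing $df\equiv 0$ and, using the puncture conditions, $f\equiv (0,0)$. Consequently $u$ factors through $\{(0,0)\}\times\{\phi=\delta\}\simeq \partial V_0$, and since $J$ restricted to this slice equals $J_V = J_{V_0}$, the map $u$ is $J_{V_0}$-holomorphic in $\R\times\partial V_0$ with boundary on $\R\times\partial l$. The converse inclusion is immediate.

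The main obstacle is the dimensional bookkeeping for the asserted 1-1 correspondence of \emph{rigid} disks. The linearized Reeb flow at $(x,y)=(0,0)$ in the normal $\R^2$-direction is the hyperbolic matrix $\beta(0,0,v)\begin{pmatrix}0&1\\2&0\end{pmatrix}$, whose Conley--Zehnder-type contribution is responsible for the formal-index difference between $\R\times G_\delta$ and $\R\times\partial V_0$. Matching this normal contribution with the chord/orbit grading data from Lemma \ref{l:middlesubcritLeg} under the index-definite assumption $n>2$ (which ensures that orbit gradings are sufficiently positive) is the delicate point; by contrast, the actual containment is entirely controlled by the clean Stokes computation above.
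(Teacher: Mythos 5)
Your confinement argument is correct but follows a genuinely different route from the paper. The paper deduces that the curve lies over $(x,y)=(0,0)$ from Lemma \ref{l:complexproj}: the fibers $V_{(x,y)}$ are $J$-complex, so a curve not contained in a fiber has locally constant, positive intersection number with nearby fibers and would therefore be unbounded in the $(x,y)$-direction, forcing a positive asymptote away from the origin. Your Stokes computation with the primitive $x\,dy$ reaches the same conclusion and is arguably more robust, since it avoids the discussion of boundary intersections with the fibers over $\{x=0\}$. One small imprecision: $\pi\circ u$ is not pseudoholomorphic for a fixed almost complex structure on $\R^2$, because the coefficient $\tfrac{1}{\omega(z,r)+4x^2+y^2}$ in the definition of $J$ depends on the point $v\in V$ and not only on $(x,y)$; what is true, and what your argument actually needs, is the pointwise inequality $(\pi\circ u)^\ast\omega_{\R^2}\ge 0$ with equality exactly where $d\pi\circ du=0$, which does follow from the formulas in Section \ref{ssec:holcurveinhandle} (and is preserved under pushforward by the Liouville flow, which acts on the $\R^2$-factor with positive determinant). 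With that adjustment the first assertion of the lemma is fully proved.

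The second assertion is where your proposal has a genuine gap: you name the matching of the normal $\R^2$-contribution with the grading data as ``the delicate point'' and stop there, so the 1--1 correspondence of \emph{rigid} disks is not established. Note first that the dimensional bookkeeping you worry about is already settled by Lemma \ref{l:middlesubcritLeg}, which shows the chord correspondence is grading preserving; hence the formal dimensions of the two moduli spaces agree and no index discrepancy needs to be absorbed. What actually remains is a transversality statement: a disk contained in $V_{(0,0)}$ that is rigid and transversely cut out as a $J_{V_0}$-curve in $\R\times\partial V_0$ must acquire no kernel and no cokernel in the normal direction when regarded as a $J$-curve in $\R\times G_\delta$. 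The paper handles this by observing that the full linearized operator splits off a normal piece which is the standard $\bar\partial$-operator on the trivial $(\C,\R)$-bundle, deformed so that the totally real boundary condition rotates by a small positive (resp.\ negative) angle at positive (resp.\ negative) punctures --- the small rotations coming precisely from the hyperbolic normal Reeb dynamics you identified --- and this operator is an isomorphism. Without this step (or some substitute for it) you have shown only that the underlying curves coincide as sets, not that rigid, transversely cut out disks on one side correspond to rigid, transversely cut out disks on the other, which is what the differential of $CE^{\ast}$ requires.
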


\begin{proof}
	If a curve intersects $V_{(x,y)}$ but is not contained in it then by Lemma \ref{l:complexproj} it is unbounded outside $(x,y)=0$ which shows it would have some positive asymptote not at $(x,y)=0$ in contradiction to our assumption. Consequently, all rigid curves actually are contained in $(x,y)=0$. The correspondence between curves is obvious, it remains only to show that the rigid disks are transversely cut out. This is straightforward, the linearization corresponds to a stabilization with small angles at the punctures, i.e., multiplication by $(\C,\R)$ followed by small deformation so that the $\R$-component at the incoming boundary component is rotated a small positive (negative) angle at positive (negative) punctures, which is easily seen to be an isomorphism.
\end{proof}

\begin{remark}\label{r:dimV_0=2}
We consider the counterpart of Lemma \ref{l:middlesubcrit} when $n=2$. In this case $V_{0}$ is a $2$-disk and $\partial l$ is a collection of points in its $S^{1}$-boundary. This type of handle was studied in \cite{ENg}. It was shown that the resulting holomorphic curves stay inside the handle and the dg-algebra was computed and was called the internal algebra \cite[Section 2.3]{ENg}. This algebra is canonically isomorphic to $CE^{\ast}(\partial l,V_{0})$, see \cite[Sections 4 and 5]{ENg}.
\end{remark}

\begin{remark}\label{r:dimV_0=2discussion}
We give a brief discussion of geometric aspects of the case $n=2$. Disks contributing to the differential in $CE^{\ast}(\partial l;D^{2})$ are anchored at the basic Reeb orbit of index $0$. Including $D^{2}$ in the middle of the $4$-dimensional handle, the grading of the Reeb orbit increases to $1$, there is no anchoring disks anymore, and the upper levels (in the symplectization) in the anchored curves are now of dimension $\le 0$. In order to achieve transversality one must break the symmetry of the Lagrangian and the Reeb chords are no longer contained in $(x,y)=0$. In particular, the trivial holomorphic strips over the Reeb chords that extend in the Liouville direction now appear in the handle model $H_{\delta}$ as thin strips around $\{y=0\}$ stretching to infinity. The perturbation breaks the symmetry and the strips go to infinity in only one direction. The disk family with a positive asymptotic at the Reeb chord going once around lie arbitrarily close to the broken disk of the augmented curve in the middle and the basic disk over the half axis chosen by the perturbation. Counts of other anchored disks can be concluded formally from properties of the differential. Geometrically, they all lie close to the anchored curve in the middle with half lines of standard disks attached.    
\end{remark}

\begin{remark}
Curves in fibers $V_{(x,y)}$, $(x,y)\ne (0,0)$ as in Lemma \ref{l:middlesubcrit} are not fixed by the Liouville flow. To find their asymptotics at the negative end one can consider the intersection with the level surface $G_{\delta'}$ as we translate the curve to infinity in the positive Liouville direction.
\end{remark}

Consider now adding $V_{0}\times D^{\ast}_{\epsilon}[-1,1]$ as above to $V\times\R\times\partial [-1,1]$ along $V_{0}\times(-\epsilon,\epsilon)$. Smoothing the union of the two copies of $l$ in $V\times\R\times\partial I$ and $\overline{\partial l}$, we get a collection of Legendrian spheres $\Sigma(h)$ in the upper contact boundary.
	
\begin{corollary}\label{c:dgsubalg}
 The Chekanov--Eliashberg dg-algebra of $\Sigma(h)$ is canonically isomorphic to the dg-algebra of $\partial l\subset\partial V_{0}$, using the identification of Reeb chords and holomorphic disks in Lemma \ref{l:middlesubcrit} for $n>2$ and the corresponding identification from Remark \ref{r:dimV_0=2} for $n=2$. \qed
\end{corollary}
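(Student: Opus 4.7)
The corollary is essentially a packaging of Lemmas \ref{l:middlesubcritLeg} and \ref{l:middlesubcrit} (for $n>2$), together with Remark \ref{r:dimV_0=2} (for $n=2$). My plan is to match the Reeb chord generators of $\Sigma(h)$ with those of $\partial l \subset \partial V_0$, identify the holomorphic disks computing the two differentials, and then check that the concatenation product is respected.

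First I would identify the generators. By construction, $\Sigma(h)$ is obtained by smoothing two copies of $l$, sitting in regions of the form $(\R\times V)\times\{\pm 1\}$ and $\partial W\times\{\mp 1\}$, with $\overline{\partial l}$ across the handle $V_0\times D^\ast_\epsilon[-1,1]$. Choosing the contact form in the handle as in Lemma \ref{l:Reebinhandle} and pushing the copies of $l$ far out in the Liouville direction, Reeb chords of bounded action are forced to sit in the middle of the handle. Lemma \ref{l:middlesubcritLeg} then provides a canonical, grading-preserving bijection between the Reeb chords of $\overline{\partial l}$ (hence of $\Sigma(h)$) and those of $\partial l\subset \partial V_0$.

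Second, I would identify the differentials. The handle-adapted almost complex structure $J$ constructed in Section \ref{ssec:holcurveinhandle} makes the fibers $V_{(x,y)}$ holomorphic (Lemma \ref{l:complexproj}). For $n>2$, Lemma \ref{l:middlesubcrit} shows that every rigid $J$-holomorphic disk in the symplectization of $G_\delta$ with boundary on $\R\times \overline{\partial l}$ and positive asymptotic at a chord over $(x,y)=0$ lies entirely in the fiber $V_{(0,0)}\cong \R\times \partial V_0$, and corresponds bijectively to a rigid $J_{V_0}$-disk in $\R\times\partial V_0$ with boundary on $\R\times\partial l$. Because we equip $V_0$ with an index definite Weinstein structure, any Reeb orbit has minimal grading $1$, so no anchoring is needed on the $V_0$ side, and the same holds on the $\Sigma(h)$ side by virtue of the containment of the disks in the middle fiber. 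The case $n=2$ is handled by Remark \ref{r:dimV_0=2} and the identification of the internal algebra of \cite{ENg} with $CE^\ast(\partial l;V_0)$.

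Third, the product is given by concatenation of Reeb chord words at common endpoints on each component sphere $\Sigma(h_j)$; since the chord bijection above is compatible with the component decomposition and endpoint data, the two algebra structures agree on generators, yielding the canonical dg-algebra isomorphism. The one step I expect to require the most care is ruling out stray rigid disks, either disks that wander out of the middle fiber or disks supported near the smoothing region between $l$ and $\overline{\partial l}$. The former is precisely Lemma \ref{l:middlesubcrit}, and the latter reduces to a positivity-of-intersection argument via Lemma \ref{l:complexproj}: such a disk would meet a fiber $V_{(x,y)}$ with $(x,y)\neq (0,0)$ without having any positive asymptote there, contradicting the locally constant intersection count with $\pi^{-1}(x,y)$.
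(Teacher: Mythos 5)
Your proposal is correct and follows the same route as the paper, which states this corollary with no separate proof precisely because it is the immediate packaging of Lemma \ref{l:middlesubcritLeg} (generators), Lemma \ref{l:middlesubcrit} (differentials, $n>2$), and Remark \ref{r:dimV_0=2} ($n=2$) that you spell out. The extra care you take in ruling out disks near the smoothing region via Lemma \ref{l:complexproj} is exactly the positivity-of-intersections argument the paper relies on implicitly (and reuses later in Lemma \ref{l:nocross}).
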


\section{Chekanov--Eliashberg dg-algebras and Legendrian surgery formulas}
In this section we define the Chekanov--Eliashberg dg-algebra of a Weinstein $(2n-2)$-domain $(V,h)\subset \partial W$ with handle decomposition $h$. It follows from the definition and the Legendrian surgery formula that the dg-algebra is isomorphic to the endomorphism algebra of co-core disks dual to the top-dimensional core-disks of $V$ in the partially wrapped Fukaya category with a stop at $V$. We also show that the dg-algebra can be decomposed into an exterior piece generated by Reeb chords in $\partial W$ connecting the $(n-1)$-dimensional core-disks $l$ of top-dimensional handles in $h$ and the dg-algebra of their attaching $(n-2)$-spheres $\partial l$ in $\partial V_{0}$, the subcritical part of $V$, which here appears as a sub-algebra, compare Corollary \ref{c:dgsubalg}. 

\subsection{Definition of Chekanov--Eliashberg dg-algebras}\label{sub:formal_def}
Let $W$ be a Weinstein $2n$-manifold and let $(V,h)\subset \partial W$ be a Legendrian embedding of the Weinstein $(2n-2)$-domain $V$ with handle decomposition $h$. Then a small neighborhood of $V$ in $\partial W$ can be identified with $V\times(-\epsilon,\epsilon)$, where the second factor is along the Reeb flow. We define $W_{V}$, $W$ stopped at $V$, to be the Weinstein cobordism with negative end $V\times\R$ obtained by attaching a $V$-handle, i.e., an $\epsilon$-neighborhood $V\times D^{\ast}_{\epsilon}[-1,1]$ of $V\times[-1,1]$ in $V\times T^{\ast}[-1,1]$, along $V\times(-\epsilon,\epsilon)$ to $\partial W\sqcup (\R\times V)$. (As mentioned in Section \ref{sec:intr}, we think of $V$ in the negative end $\R\times V$ as the Weinstein manifold which is the completion of the embedded Weinstein domain.)

\begin{lemma}\label{l:handlesandnoorbits}
The negative end of the cobordism $W_{V}$ have no closed Reeb orbits. If $W$ comes with a handle decomposition $H$ then $h$ induces a handle decomposition of $W_{V}$ with handles $H\cup h[1]$, where $h[1]$ is the set of handles of $h$ with dimension shifted up by $1$ and geometrically described in Lemma \ref{l:conehandles}. 
\end{lemma}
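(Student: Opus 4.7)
The plan is to treat the two assertions separately: the first is essentially tautological, while the second reduces to unpacking the local model of Section~\ref{sec:handle_decomp_V} and comparing Morse indices.

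For the absence of closed Reeb orbits, I would recall that the negative end of $W_V$ is the symplectization of the contactization $\R\times V$ equipped with the contact form $d\zeta+\lambda$, where $\zeta$ is the linear coordinate on the $\R$-factor and $\lambda$ is the Liouville form of $V$. A direct calculation gives the Reeb vector field $\partial_\zeta$: indeed $(d\zeta+\lambda)(\partial_\zeta)=1$ and $d(d\zeta+\lambda)(\partial_\zeta,\cdot)=d\lambda(\partial_\zeta,\cdot)=0$ because $\lambda$ is pulled back from $V$. Since $\partial_\zeta$ translates in the noncompact $\R$-direction, it admits no closed orbits.

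For the second assertion, I would use the model $H_\delta(V)\approx V\times \R^{2}$ of Section~\ref{sec:handle_decomp_V} to produce a handle decomposition of the $V$-handle $V\times D^{\ast}_{\epsilon}[-1,1]$, and then combine with the given handle decomposition $H$ of $W$. Concretely, the Liouville field $Z=z+2x\partial_x-y\partial_y$ has critical points exactly at the points $(v_0,0,0)$, where $v_0$ is a critical point of the exhausting Morse function $\phi$ of $V$. Linearizing $Z$ at such a point, the Morse index of the associated generating function equals the index of $\phi$ at $v_0$ plus one, the extra negative eigenvalue coming from the $-y\partial_y$ direction. Thus every $k$-handle of $h$ produces one $(k+1)$-handle of the $V$-handle cobordism; this is the notation $h[1]$, and the attaching data of these handles is precisely what Lemma~\ref{l:conehandles} makes explicit.

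The remaining step is to check that the attaching of the $V$-handle to $\partial W\sqcup(\R\times V)$ along $V\times(-\epsilon,\epsilon)$ is compatible with the handle decompositions on each piece, i.e.\ that the handles of $H$ and those of $h[1]$ can be assembled into a single Weinstein handle decomposition of $W_V$. This is a disjointness observation: the attaching region sits in a collar of the zero level of $\phi$ with respect to the Liouville flow, while every critical point of the induced Weinstein function on $V\times D^{\ast}_{\epsilon}[-1,1]$ lies in the interior, so the attaching regions do not interfere. The main (mild) obstacle I expect is the bookkeeping of the attaching spheres of $h[1]$ across the $V$-handle, but this is already addressed by Lemma~\ref{l:conehandles}, and the remainder is formal.
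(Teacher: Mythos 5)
Your proposal is correct and follows essentially the same route as the paper: the paper's proof simply notes that the Reeb flow on the contactization $\R\times V$ (which is $\partial_\zeta$, as you compute) has no closed orbits, and refers the index-shift statement to Lemma \ref{l:conehandles}, whose content you re-derive from the Liouville field $Z=z+2x\partial_x-y\partial_y$ and the extra contracting $-y\partial_y$ direction. Your version just fills in the details the paper delegates to that lemma.
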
 

\begin{proof}
The first statement holds by construction since the Reeb flow on $\R\times V$ has no orbits. The second statement follows from Lemma \ref{l:conehandles}. 
\end{proof}

Let $(V_{0},h_{0})$ be the subcritical part of $(V,h)$. Consider the subset $W_{V}^{0}\subset W_{V}$ which is obtained by attaching a $V_{0}$-handle to $W\sqcup (\R\times V)$. Lemma \ref{l:conehandles} then gives a link of Legendrian attaching $(n-1)$-spheres $\Sigma(h)\subset\partial_{+}W_{V}^{0}$, where $\partial_{+}W_{V}^{0}$ denotes the positive boundary of $W_{V}^{0}$. Here there is one component $\Sigma(h_{j})$ for each top-dimensional handle $h_{j}$ of $V$ and attaching Weinstein $n$-handles to $W_{V}^{0}$ along $\Sigma(h)$ we obtain $W_{V}$. As in Lemma \ref{l:handlesandnoorbits}, there are no Reeb orbits in the negative end of the cobordism $W_{V}^{0}$. We can therefore use it for anchoring holomorphic disks and the ordinary Chekanov--Eliashberg dg-algebra $CE^\ast(\Sigma(h);W_{V}^{0})$ is defined as in \cite{BEE,EkholmHol,EL}. We define the Chekanov--Eliashberg dg-algebra of $(V,h)\subset W$ to be the dg-algebra of the attaching link $\Sigma(h)$. More precisely, we have the following.

\begin{definition}\label{dfn:def_CE_of_V}
Let $(V,h)\subset\partial W$ be a Legendrian embedding. The \emph{Chekanov--Eliashberg dg-algebra} of $(V,h)$, $CE^\ast((V,h);W)$ is
\[ 
CE^\ast((V,h);W) \ := \ CE^\ast(\Sigma(h);W_{V}^{0}),
\]
where the link of Legendrian spheres $\Sigma(h)\subset \partial_{+}W_{V}^{0}$ is as described above and where the right hand side is the standard Legendrian invariant as defined in \cite{BEE,EkholmHol,EL}.
\end{definition}

\begin{proof}[Proof of Theorem \ref{t:basic}]
	Theorem \ref{t:basic} follows from the Legendrian surgery formula \cite{BEE,EkholmHol,EL} and the definition of $CE^{\ast}((V,h);W)$ as the Legendrian dg-algebra of the link of attaching spheres $\Sigma(h)$ of core disks dual to $C(h)$.
\end{proof}

\begin{remark}\label{r:overviewLegsurg}
We give a short overview of the proof of the Legendrian surgery formula used in proof of Theorem \ref{t:basic}. The result is stated in \cite[Theorem 5.8 and Remark 5.9]{BEE} and proved with details as \cite[Theorem 83]{EL} using technical results of \cite{EkholmHol}. The argument is as follows. The first step is to relate the Reeb dynamics in the contact boundary of the Weinstein manifold before handle attachment to that after. Outside a small neighborhood of the Legendrian attaching sphere the Reeb flows are identical and inside the handle the Reeb flow follows a standard model which behaves qualitatively like the geodesic flow in the unit cotangent bundle of the flat disk. Here the 0-section should be thought of as the core disk and the cotangent fiber at the center of the disk as the co-core disk of the handle. Shrinking the handle, one finds a 1-1 correspondence between Reeb chords of the co-core disks after the surgery and words of Reeb chords of the attaching sphere before the surgery, see \cite[Theorem 1.2]{EkholmHol}. The second step is to construct an $A_{\infty}$-homomorphism between the wrapped Floer cohomology of the co-core disks and the dg-algebra of the Legendrian attaching spheres, where we view the latter as a chain complex generated by composable words of chords with differential induced by the dg-algebra differential, product given by concatenation of words, and all higher operations trivial. This chain map counts holomorphic disks in the cobordism. The disks have two boundary punctures mapping to the intersection points between core and co-core disks, positive punctures at Reeb chords of the co-cores and negative punctures at Reeb chords of the attching spheres. Using a version of wrapped Floer cohomology without Hamiltonian perturbation, see \cite[Appendix B.1]{EL}, one shows that this map is a chain map, see \cite[Theorem 83, first part]{EL}. The final step of the proof is to show that the cobordism map induces is a quasi-isomorphism. This is proved usuing a natural action filtration with respect to which the chain map is triangular with $\pm 1$ on the diagonal. The fact that the chain map respects the filtration is essentially an application of Stokes theorem. The holomorphic disks that gives units on the action diagonal is constructed using disks in local models, gluing and action to control how disks in resulting moduli spaces break, see \cite[Theorem 1.3 (1)]{EkholmHol} for the construction of the disks and \cite[Theorem 83, second part]{EL} for how to deduce the quasi-isomorphism result.     
\end{remark}

\subsection{Generators}
Definition \ref{dfn:def_CE_of_V} gives $CE^{\ast}((V,h);W)$ as the usual Legendrian dg-algebra of a link  $\Sigma(h)$ and as such, it is generated by Reeb chords. The Reeb chord generators can be described as follows. Recall that we write $l$ for the union of the core $(n-1)$-disks of the top handles in $V$ and $\partial l\subset\partial V_{0}$ for their Legendrian attaching spheres in the contact boundary $\partial V_{0}$ of the subcritical part of $V$.
\begin{lemma}\label{l:generators}
For any given action level $\mathfrak{a}>0$, for all sufficiently small handles (i.e., sufficiently small $\epsilon>0$ in the handle $V_{0}\times D^{\ast}_{\epsilon} I$), after arbitrarily small perturbation of $(V,h)$, Reeb chords of $\Sigma(h)$ of action $<\mathfrak{a}$ in $\partial W_V^{0}$ are in natural 1-1 correspondence with Reeb chords connecting core-disks in the top handles $l\subset \partial W$ and Reeb chords of $\partial l\subset \partial V_{0}$. Furthermore, all chords can be assumed transverse.
\end{lemma}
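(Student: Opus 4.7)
The plan is to exploit the natural decomposition $\Sigma(h)=l_{+}\cup\overline{\partial l}\cup l_{-}$ from Lemma~\ref{l:conehandles}, where $l_{+}\subset\partial W\setminus((-\epsilon,\epsilon)\times V_{0})$ is the outer copy of $l$, $l_{-}\subset(\R\times V)\setminus((-\epsilon,\epsilon)\times V_{0})$ is the inner copy at $\zeta=0$, and $\overline{\partial l}$ is the bridge inside the $V_{0}$-handle studied in Lemma~\ref{l:middlesubcritLeg}. Correspondingly, the contact form on $\partial_{+}W_{V}^{0}$ is piecewise: the original contact form of $\partial W$ in the outer region, the contactization form $d\zeta+\lambda$ in the inner region, and the handle form $\alpha_{\delta}$ inside the handle, whose Reeb field is given by \eqref{eq:Reebhandle}.

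First I would classify each Reeb chord of $\Sigma(h)$ of action less than $\mathfrak{a}$ by which of these three regions it visits. If the chord stays inside the handle, Lemma~\ref{l:middlesubcritLeg} gives a natural grading-preserving bijection with Reeb chords of $\partial l\subset\partial V_{0}$, since all such chords lie over $(x,y)=0$ and have endpoints on $\overline{\partial l}$. If the chord stays in the outer region, then the contact dynamics is unchanged from $\partial W$, its endpoints lie on $l_{+}$, and so it is precisely a Reeb chord of $l\subset\partial W$ whose flowline avoids the removed neighborhood $(-\epsilon,\epsilon)\times V_{0}$. If the chord stays in the inner region, no such chord exists: the Reeb vector field is $\partial_{\zeta}$ and $l_{-}$ lies at $\zeta=0$, so any flowline immediately leaves $l_{-}$ and cannot return.

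The main work is to rule out mixed chords that cross between regions. By \eqref{eq:Reebhandle}, inside the handle the Reeb flow preserves the level sets of $2x^{2}-y^{2}$, and its $(x,y)$-component has speed bounded below away from $(x,y)=0$; a trajectory entering the handle from $l_{+}$ or $l_{-}$ starts with $(x,y)\neq(0,0)$ and therefore exits in time uniformly bounded independently of $\epsilon$. As $\epsilon\to 0$ the transition region along which the handle is glued shrinks, and a continuity/action argument, using that the sets of Reeb chords of $l\subset\partial W$ and of $\partial l\subset\partial V_{0}$ of action $<\mathfrak{a}$ are finite and, after generic perturbation, non-degenerate, shows that any mixed chord would have to close up outside the handle in a way inconsistent with the action bound for $\epsilon$ sufficiently small. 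In particular, for $\epsilon$ small and a generic choice of $l$, every Reeb chord of $l$ of action $<\mathfrak{a}$ is disjoint from the removed neighborhood and appears as a chord of $\Sigma(h)$ supported entirely in the outer region.

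Transversality is then obtained by a final $C^{\infty}$-small generic perturbation of $(V,h)$, simultaneously perturbing $l$ and $\partial l$ so that all Reeb chords of $\Sigma(h)$ of action $<\mathfrak{a}$ are cut out transversely; this is the standard genericity statement applied to the piecewise smooth Legendrian $\Sigma(h)$. The main obstacle is the mixed-chord analysis near the handle interface: the Reeb field is piecewise defined, so ruling out chords that oscillate between regions requires a quantitative estimate of Reeb traversal time versus accumulated action. These estimates are however explicit from \eqref{eq:Reebhandle} and Lemma~\ref{l:Reebinhandle}, and combined with the compactness of the bounded-action chord set they yield the claimed natural bijection.
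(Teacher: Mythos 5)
Your overall trichotomy (chords in the outer region $\partial W$, chords inside the $V_{0}$-handle, chords in the contactization end) matches the structure of the situation, and your identification of the handle chords with chords of $\partial l\subset\partial V_{0}$ via the model of Lemma \ref{l:middlesubcritLeg} is the right ingredient there. But the crucial step --- showing that no chord of action $<\mathfrak{a}$ mixes regions and that every chord of $l\subset\partial W$ of action $<\mathfrak{a}$ survives intact after the handle is attached --- is not actually proved. Your ``continuity/action argument'' is a placeholder: bounding the traversal time of the handle from below does not by itself exclude a short chord that starts on $l_{+}$ near the attaching locus, dips into the handle, and ends on $\overline{\partial l}$, nor does it exclude a chord of $l\subset\partial W$ whose \emph{interior} passes through $(-\epsilon,\epsilon)\times V_{0}$ and is therefore destroyed or altered by the surgery no matter how small $\epsilon$ is. Shrinking $\epsilon$ alone cannot fix the latter problem, since the flow line would still meet $\{0\}\times V_{0}$.

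The missing idea, which is the entire content of the paper's proof, is a dimension count on the subcritical skeleton: it has dimension $n-2$, while the time-$\le\mathfrak{a}$ Reeb flowout of the $(n-1)$-dimensional $l$ is at most $n$-dimensional, and $n+(n-2)=2n-2<2n-1=\dim\partial W$. Hence after a generic perturbation these sets are disjoint, and by compactness one may then choose the neighborhood $V_{0}$ of the subcritical skeleton (not just the thickness $\epsilon$ of the handle $V_{0}\times D^{\ast}_{\epsilon}I$) so small that no Reeb flow line of action $<\mathfrak{a}$ starting on $l$ meets $V_{0}\times(-\epsilon,\epsilon)$ at all. This single genericity statement simultaneously rules out all mixed chords emanating from $l_{\pm}$ and guarantees that the outer chords of $\Sigma(h)$ are literally the chords of $l\subset\partial W$; the remaining chords then lie in the handle and are handled by Lemma \ref{l:conehandles}. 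Your write-up never invokes the dimension of the subcritical skeleton or the freedom to shrink $V_{0}$, so as it stands the bijection is asserted rather than established.
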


\begin{proof}
The subcritical part of the skeleton of $V$ has dimension $n-2$. Hence for any $\mathfrak{a}>0$ there is a small neighborhood $V_{0}$ of the subcritical part of the skeleton such that no Reeb flow line starting on $l\times \{\frac12\epsilon\}\subset V\times(-\epsilon,\epsilon)$ of action $<\mathfrak{a}$ hits $V_{0}\times(-\epsilon,\epsilon)$. The result then follows from the definition of $\Sigma(h)$ and Lemma \ref{l:conehandles}.
\end{proof}

\subsection{A basic property of the differential}
We next consider the differential of the dg-algebra. The differential is defined in the standard way in terms of counts of holomorphic disks in $W_{V}^{0}$ with boundary condition $\Sigma(h)$.   

Assume that $(V,h)$ is in general position so that Lemma \ref{l:generators} holds. Lemma \ref{l:middlesubcrit} shows that $CE^{\ast}((V,h);W)$ has a subalgebra canonically identified with the dg-algebra of the top-dimensional attaching spheres of $V$. To compute the differential it remains to describe its action on Reeb chords outside the $V_{0}\times D^{\ast}_{\epsilon} [-1,1]$-handle. We will describe a concrete way of doing this in the case when $\partial W$ is a contactization in Section \ref{ssec:contactizations}. Here we establish a useful general property of curves contributing to the differential, such curves cannot `cross' the handle. Let $\mathfrak{a}>0$ be given and fix $\epsilon>0$ so that Lemma \ref{l:generators} holds.

\begin{lemma}\label{l:nocross}
No holomorphic curve in $\R\times\partial_{+} W_{V}^{0}$ with positive puncture at a Reeb chord of $\Sigma(h)$ of action $<\mathfrak{a}$ can cross the handle. In other words, such a curve lies entirely in $W\cup (V_{0}\times D_{\epsilon}^{\ast}[-1,0])$. 
\end{lemma}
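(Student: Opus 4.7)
The plan is to extend the projection-plus-positivity argument of Lemma \ref{l:middlesubcrit} to curves that are not a priori confined to the handle. Inside the $V_0$-handle I work in the local model $H_\delta(V_0) \approx V_0 \times \R^2$ together with the projection $\pi : H_\delta(V_0) \to \R^2$, whose fibers $V_{(x,y)}$ are $J$-complex by Lemma \ref{l:complexproj}. Under $\pi$, the bridge piece $\partial l \times [-1,1] \subset \Sigma(h)$ is identified with the Legendrian $\overline{\partial l}$ of Lemma \ref{l:middlesubcritLeg} and maps to the axis $\{x=0\}$, while the two halves of the handle attached to $\partial W$ and to $\R\times V$ occupy opposite half-planes; after relabeling I assume that $V_0 \times D^\ast_\epsilon[0,1]$ (the half glued to $\R\times V$) corresponds to $\{x<0\}$. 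Crossing the handle then means the image of $u$ meets $\{x<0\}$ or the $\R\times V$ part of $\partial_+ W_V^0$.

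First I would shrink $\epsilon$ so that Lemma \ref{l:generators} applies at action level $\mathfrak{a}$: every Reeb chord generator of $\Sigma(h)$ of action $<\mathfrak{a}$ then lies either in $\partial W$ (effectively the $\{x>0\}$ side) or over $(x,y)=0$ in the handle; in particular no such chord sits on the $\{x<0\}$ side, and by Lemma \ref{l:handlesandnoorbits} the negative end $\R\times V$ carries no closed Reeb orbit. Suppose for contradiction that the image of $u$ meets the forbidden region $\{x<0\}$, and let $U \subset \Sigma$ be its nonempty preimage. If $u|_U$ is contained in a single fiber $V_{(x_0,y_0)}$ then the positive puncture of $u$ must be at a Reeb chord in that fiber; since chords of action $<\mathfrak{a}$ lie only over $(0,0)$ or over the $\{x>0\}$ side, this is impossible for $x_0<0$. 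Otherwise, by Lemma \ref{l:complexproj}, $\pi \circ u|_U$ is a non-constant holomorphic map into $\{x \le 0\}$ with boundary on $\{x=0\}$ and intersects each fiber over a point with $x_0<0$ positively.

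The heart of the argument is then to rule out asymptotic escape of $u|_U$. By the choice of $\epsilon$ there are no chord asymptotes available in $\{x<0\}$ and no Reeb orbits in $\R\times V$, so $u|_U$ has no asymptotic end on this side: it is a compact holomorphic curve in $\{x \le 0\}$ with boundary on the real line $\{x=0\}$. The Schwarz reflection principle (equivalently, the maximum principle applied to the harmonic coordinate $-x$) then forces such a curve to be constant, contradicting the positive intersection with fibers over points with $x_0<0$. Hence no such $U$ exists and $u$ lies in $W \cup (V_0 \times D^\ast_\epsilon[-1,0])$.

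The main obstacle I foresee is to rule out asymptotic escape at the face where the top of the handle is glued to the $\R\times V$ portion of $\partial_+ W_V^0$, since $\pi$ is not defined past that face and a component of $u$ could a priori extend into the symplectization of $\R\times V$ without any Reeb asymptote. I plan to dispose of this via positivity: a component of $u$ in $\R\times(\R\times V)$ with Lagrangian boundary on $\R\times l$ and no asymptotic end has non-negative $d\alpha$-area, with equality forcing a trivial strip over a Reeb orbit, none of which exist in $\R\times V$. For sufficiently small $\epsilon$, matching with the action bound $<\mathfrak{a}$ then precludes any such excursion, and the projection argument above gives the desired confinement.
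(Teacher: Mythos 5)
Your setup misidentifies which hypersurface separates the two sides of the handle, and the step you lean on to finish is not available for the almost complex structure actually in play. In the model $V_0\times\R^2$ with Liouville field $z+2x\partial_x-y\partial_y$, the two attaching regions (one glued to $\partial W$, one to $\R\times V$) are the two sheets of the negative boundary $G_{-\delta}=\{x^2-\tfrac12 y^2+\phi=-\delta\}$, which lie at $y<0$ and $y>0$; the interval factor of $D^\ast_\epsilon[-1,1]$ is the $y$-direction, so ``crossing the handle'' means crossing $\{y=0\}$, not $\{x=0\}$. Your own (correct) observation that the bridge $\overline{\partial l}$ sits over $\{x=0\}$ already contradicts your labeling: the bridge runs from one attaching region to the other, so the two halves cannot be the two sides of $\{x=0\}$. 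The $x$-axis is instead the expanding direction, along which the fibers $V_{(x,0)}$ sweep out to the positive boundary $G_\delta$ away from the origin --- and that is precisely the feature the argument needs.

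Second, the finishing move --- ``a compact holomorphic curve in a half-space with boundary on the dividing real hypersurface is constant by Schwarz reflection / the maximum principle'' --- has two problems. The projection $\pi$ is not $(J,i)$-holomorphic for the $J$ of Section \ref{ssec:holcurveinhandle}: away from $(x,y)=(0,0)$ it sends $z$ to a multiple of $r=J_Vz\in TV$ and $2x\partial_x-y\partial_y$ to a multiple of $2x\partial_y+y\partial_x$, which is not $i(2x\partial_x-y\partial_y)$, so neither coordinate of $\pi\circ u$ is known to be harmonic; Lemma \ref{l:complexproj} gives positivity of intersections with the fibers, not holomorphy of the projection. More seriously, the restriction of $u$ to the forbidden region is not a curve with boundary only on the dividing hypersurface: one entire copy of $l_j\subset\Sigma(h_j)$ lies in $\R\times V$, inside the region you are trying to empty, so $u|_U$ can carry genuine Lagrangian boundary there and the reflection argument does not apply; the concluding energy estimate also requires controlling the integral of the Liouville form over the cut boundary, which is not a contact-type level set. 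The correct mechanism, and the one the paper uses, is intersection-number propagation: a crossing curve must meet $\{y=0\}$, which is foliated by the $J$-complex fibers $V_{(x,0)}$; by Lemma \ref{l:complexproj} the curve either lies in a single fiber (hence does not cross) or meets every $V_{(x,0)}$, and since the Liouville field expands along the $x$-axis this forces a positive asymptote in the handle away from the origin, excluded for action $<\mathfrak{a}$ by Lemmas \ref{l:handlesandnoorbits} and \ref{l:generators}. Replacing your reflection step by this propagation argument would repair the proof.
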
 

\begin{proof}
To see this note that inside the handle such a curve must intersect the hypersurface $\{y=0\}$ which is foliated by the $J$-complex submanifolds $V_{(x,0)}$. It follows by positivity of intersections that the curve is either contained in a $J$-complex submanifold $V_{(x,0)}$ or it intersects every fiber over $(x,0)$ non-trivially. This means the curve must have a positive asymptotic inside the handle (recall the component of the Liouville vector fields along the $x$-axis points outwards) not over the origin, which is incompatible with being a curve with positive puncture at a Reeb chord of $\Sigma(h)$ of action $< \mathfrak{a}$ by Lemmas \ref{l:handlesandnoorbits} and \ref{l:generators}. It follows that the curve cannot cross the handle.  
\end{proof}

\begin{remark}
The proof of Lemma \ref{l:nocross} shows that any holomorphic curve that passes through the handle must have positive asymptotics at a Reeb chord or orbit that goes through the handle. In case $V$ have critical handles there are such chords and orbits. In the subcritical index definite case they are ruled out below any given action level for sufficiently thin handles by Lemmas \ref{l:handlesandnoorbits} and \ref{l:generators}. 
\end{remark}

\section{Non-singular Legendrians treated as singular Legendrians}\label{sec:newandold}
In this section we show that the new definition of Chekanov--Eliashberg dg-algebras agrees with the standard definition for smooth Legendrians. As a consequence we conclude that \cite[Conjecture 3]{EL} holds.  

Let $\Lambda\subset\partial W$ be a smooth connected Legendrian submanifold with a handle decomposition $h$ with a single top handle and let $V=D^{\ast}_{\epsilon}\Lambda$ denote a small neighborhood of $\Lambda$ in $T^{\ast}\Lambda$. By the Darboux theorem for Legendrians there is a Legendrian embedding $V\to W$ canonically associated to $\Lambda$ for all sufficiently small $\epsilon>0$. 

Recall the construction of $CE^{\ast}((V,h);W)$: $\Sigma(h)\subset\partial W_{V}^{0}$ denotes the attaching sphere of the top handle of $W_{V}$. It consists of two copies of the core disk $l$ of the top handle joined by $[-1,1]\times\partial l$ across the handle. We next note that the cobordism $W_{V}^{0}$ contains a natural Lagrangian cobordism $Q'$ with topology $(\Lambda\times [-1,1])\setminus D^{n}$, where $D^{n}$ is a disk with boundary $\Sigma(h)$. The positive boundary of $Q'$ is $\Sigma(h)$ and the negative boundary is $\Lambda\times\{-1,1\}$. 

We will consider a subset $Q\subset Q'$ with the topology of $\Lambda\times [-1,\delta]$. More precisely, we take $Q=Q'\cap V\times D^{\ast}_{\epsilon}[-1,\delta]$, where $\delta>0$. (Here we think of the critical points of the cobordism as sitting in $V\times 0$.)

\begin{figure}[!htb]
	\includegraphics{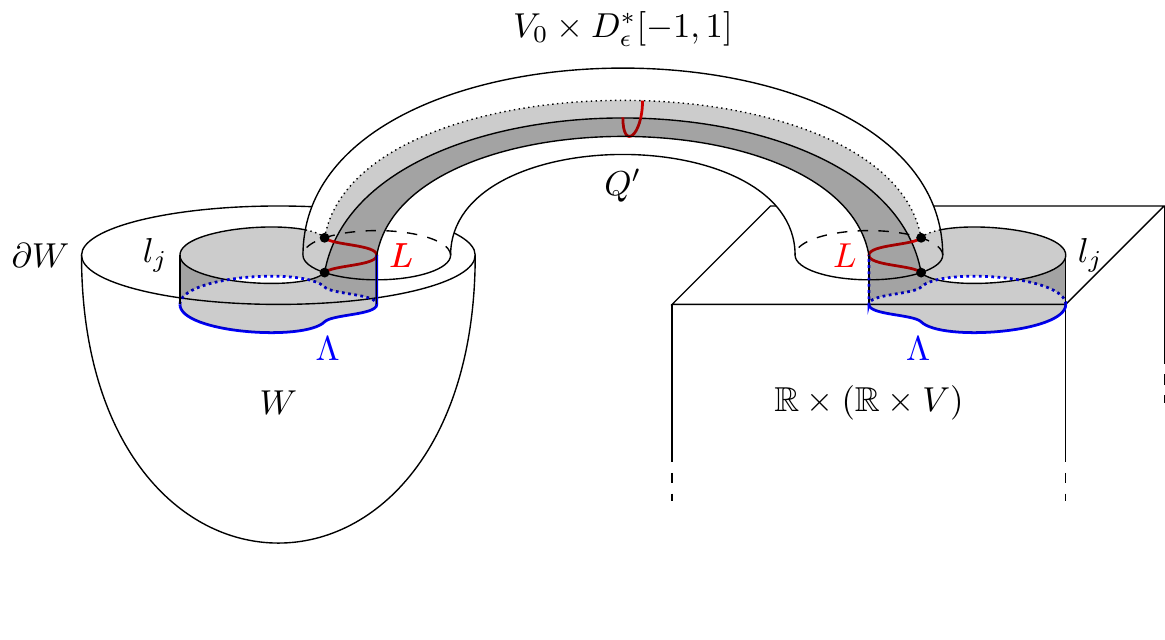}
	\caption{The Lagrangian cobordism $Q'$ with positive boundary $\Sigma(h)$ and negative boundary $\Lambda \times \{-1,1\}$ is shaded in gray. The handles of $\Lambda$ below maximal dimension gives a Lagrangian filling $L$ of $\partial l$ in $V_0$.}
	\label{fig:Q_cobordism}
\end{figure}

To relate the two definitions of dg-algebras we first note that 
\[
CE^{\ast}(\Lambda,C_{-\ast}(\Omega\Lambda))=CE^{\ast}(\Lambda,C_{-\ast}(\Omega Q)),
\] 
where the map on coefficients is induced by inclusion. We then note that there is a natural cobordism map
\[ 
\Phi\colon CE^{\ast}(\Sigma(h);W_{V}^{0})\longrightarrow CE^{\ast}(\Lambda,C_{-\ast}(\Omega Q)),
\] 
defined by moduli spaces of holomorphic disks with positive puncture in the positive end and negative punctures in the negative and `loop space coefficients' in the cobordism $Q$. 

More formally, let $a$ be a Reeb chord of $\Sigma(h)$ and let $\mathcal{M}(a)$ denote the moduli space of holomorphic disks in the cobordism $W_{V}^{0}$ with boundary on $Q'$ and negative punctures at Reeb chords of $\Lambda$. Any component $\mathcal{M}(a;b_{1}\dots b_{m})$ of such a moduli space where the negative punctures are at Reeb chords $b_{1},\dots,b_{m}$ defines a sum of monomials in $CE^{\ast}(\Lambda,C_{-\ast}(\Omega Q'))$ as follows. The fundamental chain of the moduli space gives a chain of based loops in the product, with one factor for each boundary segment. Applying the Alexander--Whitney diagonal approximation as in \cite{EL} we get a sum of monomials 
\[ 
\sigma_{0}b_{1}b_{2}\dots b_{m} +b_{1}\sigma_{1}b_{2}\dots b_{m}+\dots+b_{1}\dots b_{m}\sigma_{m}, 
\]
in the dg-algebra, where $\sigma_{j}$ are chains of based loops in $Q'$, see \cite{EL}. We write $[\mathcal{M}(a)]$ for the sum of all such elements over all components in the moduli space with positive puncture at $a$ and define
\[ 
\Phi(a) \ = \ [\mathcal{M}(a)] \ \in \ CE^{\ast}(\Lambda,C_{-\ast}(\Omega Q')).
\]
     
\begin{lemma}
The map $\Phi$ is a chain map, $[\mathcal{M}(a)]\in C_{-\ast}(\Omega Q)\subset C_{\ast}(\Omega Q)$ and therefore $\Phi$ defines a chain map $CE^{\ast}(\Sigma(h);W_{V}^{0})\to CE^{\ast}(\Lambda,C_{-\ast}(\Omega Q))$.
\end{lemma}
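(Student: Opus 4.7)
The plan is to verify the chain map property and the containment $[\mathcal{M}(a)] \in C_{-\ast}(\Omega Q)$ separately; the final claim then follows by composition with the inclusion $C_{-\ast}(\Omega Q) \hookrightarrow C_{-\ast}(\Omega Q')$.

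For the chain map property, I would examine the codimension-one boundary of the compactified one-dimensional components of $\mathcal{M}(a)$ by SFT compactness and gluing. The boundary decomposes into three classes of broken configurations: (i) SFT breaking at the positive puncture, which yields a disk in the symplectization of $\partial_{+} W_V^0$ with boundary on $\R \times \Sigma(h)$ glued below to a cobordism disk and contributes $\Phi \circ \partial_{CE}^{\Sigma(h)}(a)$; (ii) SFT breaking at a negative puncture $b_i$, which produces a cobordism disk capped below by a disk in $\R \times \partial W$ with boundary on $\R \times \Lambda$ and contributes the holomorphic-disk part of the target differential on $CE^\ast(\Lambda, C_{-\ast}(\Omega Q))$ applied to $\Phi(a)$; and (iii) boundary strata obtained by applying the loop space boundary to the chains $\sigma_j$ produced from the Alexander--Whitney decomposition of the fundamental chain of each component $\mathcal{M}(a; b_1 \dots b_m)$, which match the coefficient part of the same target differential. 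Summing contributions of types (i)--(iii) yields the identity $\Phi \partial = \partial \Phi$.

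For the containment statement, the goal is to show that for $|a| < \mathfrak{a}$ every $u \in \mathcal{M}(a)$ meets the handle $V \times D^\ast_\epsilon[-1,1]$ only over the sub-interval $[-1, \delta]$, so that $\partial u \subset Q = Q' \cap V \times D^\ast_\epsilon[-1, \delta]$ and each Alexander--Whitney chain $\sigma_j$ automatically lies in $C_{-\ast}(\Omega Q)$. I would argue this by a positivity-of-intersection argument along the lines of Lemma \ref{l:nocross}, using that the fibers $V_{(x,y)}$ of the projection to the $(x,y)$-plane are $J$-holomorphic (Lemma \ref{l:complexproj}): a disk whose image extended beyond $\{t = \delta\}$ would have to intersect a fiber over a point with $(x,y)\neq 0$ positively and thereby acquire an extra positive asymptotic inside the handle that is incompatible with the action threshold guaranteed by Lemma \ref{l:generators}. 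The main obstacle is calibrating the handle width $\epsilon$ and the cut-off $\delta$ to the fixed action level $\mathfrak{a}$ so that this positivity argument applies uniformly to every rigid disk contributing to $\Phi$, and verifying that the resulting chain-level confinement passes through the Alexander--Whitney construction without introducing loops that wind around the removed disk $D^n$ in $Q'$.
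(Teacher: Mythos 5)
Your proposal is correct and follows essentially the same route as the paper: the paper's proof likewise reduces the chain map property to the boundary analysis of one-dimensional moduli spaces carried out in \cite[Proposition 21]{EL} (your items (i)--(iii) are exactly that argument unpacked), and obtains the containment $[\mathcal{M}(a)]\in C_{-\ast}(\Omega Q)$ by confining disk boundaries to $Q\subset Q'$ via the positivity-of-intersection argument of Lemma \ref{l:nocross}, which already fixes $\epsilon$ against the action level $\mathfrak{a}$. The calibration issue you flag at the end is thus handled in advance by Lemmas \ref{l:generators} and \ref{l:nocross}, and the winding concern is vacuous since $Q\simeq\Lambda\times[-1,\delta]$ deformation retracts onto $\Lambda$.
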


\begin{proof}
The proof of the chain map property follows as in \cite[Proposition 21]{EL} once we know that all holomorphic disks with positive puncture at a Reeb chord of $\Sigma(h)$ has its boundary in $Q\subset Q'$. This follows from Lemma \ref{l:nocross}.
\end{proof}

We will next show that $\Phi$ is a quasi-isomorphism. To this end we use a particular representation of the loop space of $\Lambda$. More precisely, we replace $\Lambda$ by a space $\bar\Lambda$ in which a disk containing the core of the top cell $l$ is identified to a point. We first consider the restriction of the map $\Phi$ to the short Reeb chords inside the handle. It follows from Lemma \ref{l:middlesubcrit} that this map is the natural curve counting map from $CE^{\ast}(\partial l;\partial V_{0})$ that associates to a Reeb chord the chain of loops in $\bar\Lambda$ carried by the moduli space of disks with a positive puncture at that Reeb chord. We call this map $\psi$.

\begin{lemma}\label{l:surgeryloopspace}
The map
\[ 
\psi\colon CE^{\ast}(\partial l;\partial V_{0})\longrightarrow C_{-\ast}(\Omega\bar\Lambda)
\]
is a quasi-isomorphism.
\end{lemma}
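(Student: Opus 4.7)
The plan is to identify $\psi$ up to chain homotopy with the composite of two established quasi-isomorphisms. Since $V\simeq T^\ast_\epsilon\Lambda$ is obtained from its subcritical part $V_0$ by attaching a single critical Weinstein $(n-1)$-handle along the Legendrian attaching sphere $\partial l$, the Legendrian surgery formula of \cite{BEE,EkholmHol,EL} yields a quasi-isomorphism
\[
\sigma\colon CE^\ast(\partial l;\partial V_0)\xrightarrow{\ \sim\ }CW^\ast(C;V),
\]
where $C$ is the co-core disk of the handle. Since $V$ is a cotangent disk bundle and $C$ is the cotangent fiber at the critical point, which lies inside the disk collapsed to form $\bar\Lambda$, Abouzaid's theorem \cite{AbouzaidBased} followed by the map induced by the homotopy equivalence $\Lambda\to\bar\Lambda$ provides
\[
\alpha\colon CW^\ast(C;V)\xrightarrow{\ \sim\ }C_{-\ast}(\Omega\Lambda)\xrightarrow{\ \sim\ }C_{-\ast}(\Omega\bar\Lambda).
\]

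The main step is then to verify that $\psi=\alpha\circ\sigma$. Both maps count holomorphic disks with a positive puncture at a Reeb chord of $\partial l$. By Lemma \ref{l:middlesubcrit}, every rigid disk contributing to $\psi$ lies over $(x,y)=0$ in the $V_0$-handle and corresponds bijectively to a rigid disk in $\R\times\partial V_0$ with boundary on $\R\times\partial l$, which is exactly the type of curve defining $\sigma$. Closing such a boundary across the handle by the two copies of $l$ in $\Sigma(h)$ and projecting down to $\bar\Lambda$ traces out loops based at the collapsed point, and I would argue these loops coincide with those recorded by $\alpha$ applied to $\sigma$, because Abouzaid's map is defined by reading off the boundary loop of half-disks on the cotangent fiber $C$, while the surgery map replaces such half-disks precisely by the handle-adapted disks of Lemma \ref{l:middlesubcrit}.

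The main obstacle is to arrange this comparison on the nose rather than merely up to homology. With compatible choices of contact form, almost complex structure, and a Morse function on $\Lambda$ whose maximum sits in the collapsed disk, the relevant moduli spaces should coincide as geometric objects, but the two constructions may still yield different chain representatives of the loop data once the Alexander--Whitney coproduct of \cite{EL} is applied to extract monomials. Resolving this either requires an explicit matching of the coproducts on both sides or, failing that, a parametrized interpolation of almost complex structures producing a chain homotopy $\psi\simeq\alpha\circ\sigma$. In either case, since $\alpha$ and $\sigma$ are quasi-isomorphisms, we conclude that $\psi$ is a quasi-isomorphism as claimed.
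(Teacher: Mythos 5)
Your proposal takes essentially the same route as the paper: factor $\psi$ against the Abouzaid quasi-isomorphism $CW^{\ast}(F;V)\to C_{-\ast}(\Omega\bar\Lambda)$ and the Legendrian surgery quasi-isomorphism, then conclude by two-out-of-three. The only difference is that the paper orients the triangle as $\alpha=\psi\circ\beta$, with the surgery map $\beta\colon CW^{\ast}(F;V)\to CE^{\ast}(\partial l;\partial V_{0})$ in its geometrically natural direction, and obtains this identity directly by SFT-stretching, which sidesteps the chain-level matching and interpolation issues you flag at the end.
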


\begin{proof}
Consider the wrapped Floer cohomology of the fiber $F$ in the middle of the top handle. Note that $V=T^{\ast}\Lambda$. Recall the standard quasi-isomorphism
\[ 
\alpha\colon CW^{\ast}(F;V)\longrightarrow C_{-\ast}(\Omega\bar\Lambda),
\]
see e.g., \cite{AbouzaidBased,AsplundFiber}, and the Legendrian surgery isomorphism, \cite{BEE,EkholmHol,EL},
\[ 
\beta\colon CW^{\ast}(F;V)\longrightarrow CE^{\ast}(\partial l;\partial V_{0}).
\]
By SFT-stretching,
\[ 
\alpha=\psi\circ\beta.
\]
Since $\alpha$ and $\beta$ are both quasi-isomorphisms so is $\psi$.
\end{proof}

\begin{proof}[Proof of Theorem \ref{t:loopspacecoeff}]
After Lemma \ref{l:surgeryloopspace}, the standard action argument applies: Lemma \ref{l:surgeryloopspace} gives the desired small action isomorphism at the action level of the chords in the middle of the handle, as we increase the action, trivial strips over the Reeb chords of $l$ shows that the chain map $\Phi$ is triangular with respect to the action filtration with $\pm 1$ on the diagonal.  
\end{proof}

Theorem \ref{t:loopspacecoeff} can be specialized in various ways. Here we consider the case when there is an augmentation of the dg-algebra corresponding to the loop space. If $\epsilon\colon CE^{\ast}(\partial l;\partial V_{0})\to\C$ is an augmentation then we can form the $\epsilon$-partially linearized Chekanov--Eliashberg dg-algebra $CE^{\ast}((V,h);W;\epsilon)$ generated by chords of $l$ only and with differential given by the differential in $CE^{\ast}((V,h);W)$ followed by $\epsilon$ on short chords in the subalgebra $CE^{\ast}(\partial l;V_{0})$, compare \cite[Section 4.6]{BEE}.

\begin{corollary}\label{cor:smoothusual}
	 If $\epsilon=\epsilon_{L}$, where $L$ is a Lagrangian filling of $\partial l$ in $V_{0}$ then $CE^{\ast}((V,h);W;\epsilon)$ is isomorphic to the usual Legendrian dg-algebra of $\Lambda$, where $\Lambda$ is obtained by capping $l$ off by $L$.
\end{corollary}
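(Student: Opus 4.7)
The plan is to identify generators and differentials of $CE^{\ast}((V,h);W;\epsilon_L)$ with those of the usual Chekanov--Eliashberg dg-algebra $CE^{\ast}(\Lambda;W)$ via an SFT neck-stretching argument around the $V_0$-region.

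First, I would match generators. The Reeb vector field of $\partial W$ is transverse to the Weinstein hypersurface $V$, so any Reeb chord of $\Lambda = l \cup L$ with an endpoint on $L \subset V_0$ must leave $V$ immediately and re-enter only after traversing $\partial W$. A shrinking argument as in Lemma \ref{l:generators} shows that for any action threshold $\mathfrak{a}>0$ and $V_0$ sufficiently thin, all Reeb chords of $\Lambda$ of action $<\mathfrak{a}$ have both endpoints on $l$, and these are in natural bijection with the external chords of $\Sigma(h)$, i.e., with the generators of $CE^{\ast}((V,h);W;\epsilon_L)$.

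Next, I would compare the differentials via SFT neck-stretching along a suitable contact hypersurface of $\partial W$ enclosing the $V_0$-region (installed via the $V_0$-handle construction of $W_V^0$). A holomorphic disk in $W$ with boundary on $\Lambda$ contributing to the differential of $CE^{\ast}(\Lambda;W)$ splits in the stretched limit into an upper-level disk in $W_V^0$ with boundary on $\Sigma(h)$, carrying additional negative punctures at short chords of $\partial l$, together with lower-level disks in $V_0$ with boundary on $L$ and positive punctures at the same short chords. By Lemma \ref{l:nocross} the upper level is precisely the type of disk counted by the differential of $CE^{\ast}((V,h);W) = CE^{\ast}(\Sigma(h);W_V^0)$, while the total count of lower-level pieces over a tuple of short chords is by definition $\epsilon_L$ applied to the corresponding monomial. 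Composition reproduces the formula for the partially linearized differential from \cite[Section 4.6]{BEE}.

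Extending the identification on generators as an algebra map and using a standard action-filtration argument (as in the proof of Theorem \ref{t:loopspacecoeff}) upgrades the correspondence to an isomorphism of dg-algebras at all action levels. The main obstacle is the geometric setup of the neck-stretching: since $V_0$ is a Weinstein rather than contact hypersurface in $\partial W$, the neck must be installed carefully---most naturally via the $V_0$-handle itself---and the usual SFT compactness and transversality must be verified so that, in the stretched limit, disks decompose cleanly into the described upper- and lower-level pieces without unexpected breakings near $\partial l$.
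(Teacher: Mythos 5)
Your proposal is correct in substance, but it is organized differently from the paper's argument, which is much shorter because it recycles the machinery of Theorem \ref{t:loopspacecoeff}: the paper simply post-composes the already-constructed cobordism map $\Phi\colon CE^{\ast}(\Sigma(h);W_{V}^{0})\to CE^{\ast}(\Lambda,C_{-\ast}(\Omega Q))$ with the augmentation of the coefficient algebra that sends degree zero chains to $1$ and all other chains to $0$, and then runs the same action-filtration argument restricted to the long chords. The geometric content is identical to what you describe --- the disks defining $\Phi$ live in the $V_{0}$-handle cobordism $(W_{V}^{0},Q')$, their lower-level pieces in $V_{0}$ with boundary on $L$ are exactly what produces $\epsilon_{L}$ (this is the content of the map $\psi$ of Lemma \ref{l:surgeryloopspace} followed by the collapse), and Lemma \ref{l:nocross} confines the upper levels as you say --- but by factoring through the loop-space-coefficient dg-algebra the paper avoids having to redo the SFT compactness and gluing analysis that you correctly flag as the main obstacle in a direct neck-stretch; that analysis is absorbed into the proof that $\Phi$ is a chain map and into Theorem \ref{t:loopspacecoeff}. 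What your more self-contained route buys is independence from the loop-space model (it would apply verbatim to fillings $L$ not arising from a handle decomposition of a smooth $\Lambda$, provided one checks that $l\cup L$ is smoothed with a conical end along $\partial l$ so that no extra short chords survive the shrinking argument of Lemma \ref{l:generators}); what the paper's route buys is brevity and the automatic compatibility of $\epsilon_{L}$ with the degree-zero collapse on $C_{-\ast}(\Omega\bar\Lambda)$. Neither issue you would face is a genuine gap, only work already done elsewhere in the paper.
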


\begin{proof}
	 The chain map relating the algebras is the composition of $\Phi$ with the map that takes degree zero chains to $1$ and other chains to $0$. The quasi-isomorphism statement follows as in the proof of Theorem \ref{t:loopspacecoeff}, by restricting attention to long chords only.
\end{proof}

\section{Cut and paste}\label{sec:cut_and_paste}
In this section we prove Theorem \ref{t:vankampen} and consider its consequences for stop removal. 

\subsection{A natural push-out diagram}
Assume $(V,h)$ is Legendrian embedded in the boundary of two Weinstein manifolds $W$ and $W'$, respectively. Consider $W_{V}$ and $W'_{V}$ as defined in Section~\ref{sub:formal_def}. Consider the Weinstein manifold $W \#_{V} W'$ obtained by connecting $W$ to $W'$ by a $V$-handle. More precisely, we construct $W\#_{V} W'$ by attaching $V\times D^{\ast}_{\epsilon}[-1,1]$ exactly as in the construction of $W_{V}$ from $W$ and $\R\times(\R\times V)$ in Section \ref{sub:formal_def}.  

\begin{proof}[Proof of Theorem \ref{t:vankampen}]
Consider the manifold $W\#_{V_{0}} W'$ and note that the core disks $l$ of the top handles of $h$ in $W$ and $W'$ joined by $\partial l\times [-1,1]$ form Legendrian attaching spheres $\Sigma_{\#}(h)$ in $\partial W\#_{V_{0}}W'$ for $W\#_{V} W'$. In other words, we obtain $W\#_{V} W'$ by attaching Weinstein $n$-handles to $W\#_{V_{0}}W'$ along $\Sigma(h)\subset \partial W\#_{V_{0}}W'$.

Lemma \ref{l:nocross} shows that holomorphic disks that contribute to the differential in 
$$ CE^{\ast}(\Sigma(h);W\#_{V_{0}}W')$$ 
of a Reeb chords of $\Sigma(h)$ in $\partial W$ or $\partial W'$ stays in $W\cup (D^{\ast}_{\epsilon}[-1,0]\times V_{0})$ and $W'\cup(D^{\ast}_{\epsilon}[0,1]\times V_{0})$, respectively, and that disks with positive puncture at a Reeb chord inside $D^{\ast}_{\epsilon}[-1,1]\times \partial V_{0}$ stays over $(0,0)\times V_{0}$ if $n>2$ and stays arbitrarily close to $\{y=0\}$ if $n=2$, see Remark \ref{r:dimV_0=2}. The theorem follows.  
\end{proof}
\subsection{Stop removal}
In this section we consider the operation of removing a stop. Consider as usual a Legendrian embedding $(V,h)\subset\partial W$ with $m$ top-dimensional handles $h_{j}$ with core disks $l_{j}$,  $j=1,\dots,m$.

\begin{definition}\label{dfn:loose_Legendrian_embedded_V}
	 The Legendrian embedding of $(V,h)$ is \emph{loose} if the core disks $l_{j}$ of all the top handles $h_{j}$, $j=1,\dots,m$ admit disjoint loose charts in $\partial W$, see \cite{EM}.
\end{definition}

As expected the Chekanov--Eliashberg dg-algebra of a loose embedding of $(V,h)$ is trivial:
\begin{lemma}\label{lma:V_loose_CE_trivial}
	Let $(V,h)\subset\partial W$ be a loose Legendrian embedding then $CE^\ast((V,h);W)$ is trivial.
\end{lemma}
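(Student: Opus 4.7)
The plan is to reduce the claim to the well-known fact that the Chekanov--Eliashberg dg-algebra of a loose Legendrian link is acyclic. Concretely, I would show that looseness of the embedding $(V,h)\subset\partial W$, in the sense of Definition \ref{dfn:loose_Legendrian_embedded_V}, implies that the link of attaching spheres $\Sigma(h)\subset\partial W_{V}^{0}$ used to define $CE^{\ast}((V,h);W)$ is itself a loose Legendrian link, and then invoke the standard acyclicity result.

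First I would exhibit loose charts for $\Sigma(h)$. Each component $\Sigma(h_{j})$ contains, by construction, an embedded copy of the core disk $l_{j}\subset\partial W\setminus((-\epsilon,\epsilon)\times V_{0})$. By hypothesis, each $l_{j}$ admits a loose chart $U_{j}\subset\partial W$, with the $U_{j}$ pairwise disjoint. After an arbitrarily small Legendrian isotopy we may assume each $U_{j}$ sits in the interior of $l_{j}$, away from $\partial l_{j}$, and in particular outside the handle attaching region $(-\epsilon,\epsilon)\times V_{0}$ for any sufficiently small $\epsilon>0$. Since $W_{V}^{0}$ is built from $W\sqcup(\R\times V)$ purely by attaching the $V_{0}$-handle along $V_{0}\times\{-1,1\}$, the contact structure on $\partial W_{V}^{0}$ outside this attaching region agrees with the original one on $\partial W$. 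Hence each $U_{j}$ persists as a loose chart for $\Sigma(h_{j})\subset\partial W_{V}^{0}$, and the pairwise disjointness is preserved, so $\Sigma(h)$ is indeed a loose Legendrian link.

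Next I would invoke the standard consequence of Murphy's h-principle for loose Legendrians, namely that the loose chart can be used to produce a Legendrian isotopy (a stabilization) creating a pair of short Reeb chords $c,e$ whose disk count forces $\partial e=1+(\text{strictly higher action terms})$. A standard action/filtration argument then shows that the unit is a boundary, hence $CE^{\ast}(\Sigma(h);W_{V}^{0})\simeq 0$ as a dg-algebra, giving
\[
CE^{\ast}((V,h);W)=CE^{\ast}(\Sigma(h);W_{V}^{0})\simeq 0.
\]

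The main thing to check carefully is that this acyclicity argument goes through in the present \emph{anchored} setup, where disks are counted in the filling $W_{V}^{0}$ rather than in the symplectization of $\partial W_{V}^{0}$. The key observation is that the stabilization disks produced from the loose chart may be chosen to lie in an arbitrarily small tubular neighborhood of the loose chart inside the symplectization, disjoint from the critical handles of $W$ and from the $V_{0}$-handle; consequently no nontrivial anchoring contributions arise and the classical argument applies verbatim. This localization step is the only place where the construction of $W_{V}^{0}$ interacts with the loose-chart argument, and it is the main (minor) obstacle in the proof.
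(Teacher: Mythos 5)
Your proposal follows the same route as the paper: observe that the loose charts for the core disks $l_{j}$ furnish loose charts for the components $\Sigma(h_{j})$ of the attaching link, conclude that $\Sigma(h)\subset\partial W_{V}^{0}$ is loose, and invoke the standard acyclicity of the Chekanov--Eliashberg dg-algebra of a loose Legendrian. The extra care you take with the anchored setup is a reasonable elaboration but does not change the argument, which matches the paper's proof.
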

\begin{proof}
	The dg-algebra $CE^{\ast}((V,h);W)$ is defined as the dg-algebra of the link $\Sigma(h)\subset \partial W_{V}^{0}$ where each component $\Sigma(h_{j})$ contains the core disk $l_{j}$ of $h_{j}$. The loose charts for $l_{j}$ give loose charts for $\Sigma(h)$ which then is loose and hence its dg-algebra is quasi-isomorphic to the trivial algebra. 
\end{proof}

Let $V$ be a Weinstein $(2n-2)$-manifold and consider the product $X_{1}=V\times D^{\ast}_{1}[-1,1]$ with rounded corners and Liouville vector field $z + x \partial_x + y \partial_y$ where $z$ is a Liouville vector field on $V$, see Figure \ref{fig:liouville}. Then $V\times \{(-1,0)\}$ is a Legendrian embedding of $V$ in $X_{1}$. 
\begin{figure}[!htb]
	\includegraphics{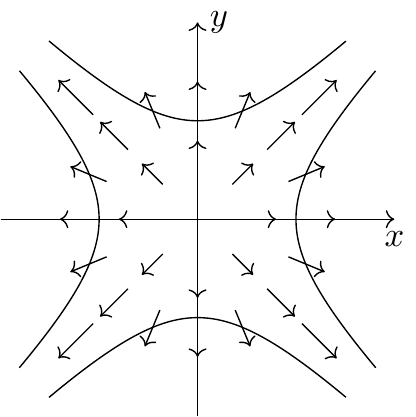}
	\caption{The projection of the Liouville vector field on the Weinstein manifold $X_1$ to the factor $D^\ast_1[-1,1]$.}
	\label{fig:liouville}
\end{figure}
\begin{lemma}\label{lma:V_loose}
	For any handle decomposition $h$ of $V$, the Legendrian embedding of $(V,h)$ corresponding to $V\times\{(-1,0)\}$ is loose in $X_{1}$. 
\end{lemma}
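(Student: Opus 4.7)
The plan is to exhibit, for each top-handle core disk $l_j$ of the Legendrian embedding $V \times \{(-1, 0)\}$, a loose chart in $\partial X_1$, using the product structure of $X_1 = V \times D^*_1[-1, 1]$.

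First I would observe that $X_1$ is subcritical. The critical points of the Liouville vector field $Z = z + x \partial_x + y \partial_y$ are of the form $(p, 0, 0)$ with $p \in \mathrm{Crit}(z)$, and the linearization decomposes as $DZ(p, 0, 0) = Dz(p) \oplus I_2$. Since the $I_2$-block contributes only expanding eigenvalues, the Morse index of $(p, 0, 0)$ in $X_1$ equals the Morse index of $p$ in $V$, which is at most $n-1 < n = \tfrac{1}{2}\dim X_1$. Hence $X_1$ is a subcritical Weinstein $2n$-manifold.

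Second, I would produce loose charts around each $l_j$. Locally near an interior point $p_0 \in l_j$, a handle model for $V$ gives coordinates $(x, y) \in \R^{n-1} \times \R^{n-1}$ with $\lambda_V = 2x \cdot dy + y \cdot dx$ and $l_j = \{y = 0\}$. Parameterizing $\partial D^*_1[-1,1]$ near $(-1, 0)$ by a tangential coordinate $v$, a neighborhood of $p_0 \times \{(-1, 0)\}$ in $\partial X_1$ is contactomorphic to an open set in $(\R^{2n-1}_{(x, y, v)},\ \alpha = 2x \cdot dy + y \cdot dx + \tfrac{1}{2}dv)$ with $l_j$ locally the zero section $\{y = 0, v = 0\}$. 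The key point is that the $v$-coordinate is a genuinely new contact direction supplied by the $D^*_1$ factor, not forced by a Weinstein neighborhood of $l_j$ in $\partial V$. I would then either construct a compactly supported Legendrian isotopy of $l_j$ within this chart that produces a Murphy-style zig-zag in the front projection to $(x, v)$-space, by balancing a small bump in $y$ with a coordinated oscillation in $v$ so that the Legendrian condition $\alpha|_{l_j^t} = 0$ is preserved, or, more cleanly, invoke the Cieliebak--Eliashberg flexibility theorem for subcritical Weinstein manifolds, which (for $n \geq 3$) guarantees that every embedded Legendrian $(n-1)$-disk in $\partial X_1$ is Legendrian isotopic to a loose one.

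Finally, since the core disks $l_j$ are pairwise disjoint in $V$, the individual isotopies can be chosen with disjoint supports in small neighborhoods of the $l_j$'s, and then assembled into a single Legendrian isotopy of the full embedding $V \times \{(-1, 0)\}$ through Legendrian embeddings of $(V, h)$. After this isotopy the core disks admit disjoint loose charts, so the embedding is loose in the sense of Definition~\ref{dfn:loose_Legendrian_embedded_V}. The hard part will be verifying that these compactly supported perturbations of the $l_j$'s extend to a genuine Legendrian isotopy of the singular Legendrian $(V, h)$ compatible with the Weinstein structure, which requires keeping the subcritical part $V_0$ and the attaching spheres $\partial l_j \subset \partial V_0$ essentially unchanged while the perturbation is carried out deeper inside the top handles.
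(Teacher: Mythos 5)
There is a genuine gap here, and it lies exactly where you locate the ``hard part,'' but it is worse than you suggest: both of your proposed routes to a loose chart fail. For route (a), recall that a loose chart is carried to a loose chart by any ambient contact isotopy, so looseness is invariant under Legendrian isotopy; consequently a compactly supported Legendrian isotopy can never \emph{create} a zig--zag/loose chart that was not already present (stabilization is not an isotopy). The local model you write down, $\alpha=2x\cdot dy+y\cdot dx+\tfrac12 dv$ with $l_j=\{y=0,v=0\}$, is just a standard jet-space neighborhood that \emph{every} Legendrian possesses, so it contains no information about looseness; the observation that $v$ is ``a new contact direction'' does not by itself produce the quantitative loose chart of Murphy. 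For route (b), there is no theorem asserting that Legendrian $(n-1)$-disks in the boundary of a subcritical Weinstein manifold are loose or isotopic to loose ones. The paper itself supplies the counterexample: $B^{2n}$ is subcritical, yet the standard Legendrian unknot $\Lambda_{n-1}\subset\partial B^{2n}$ has $CE^{\ast}(\Lambda_{n-1};B)=\C[a_{n-1}]$, which is not acyclic, so $\Lambda_{n-1}$ is not loose; likewise the top-handle core disk of its Weinstein thickening cannot admit a loose chart, since by Theorems \ref{t:basic} and \ref{t:loopspacecoeff} the resulting $CE^{\ast}((V,h);B)\approx CW^{\ast}(C;T^{\ast}\R^{n})\approx\C$ is nonzero, whereas looseness would force it to vanish as in Lemma \ref{lma:V_loose_CE_trivial}. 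So subcriticality of $X_1$ (which you do verify correctly) is not the mechanism.

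What actually drives the lemma is the special position of $V\times\{(-1,0)\}$ relative to the \emph{radial} Liouville field $x\partial_x+y\partial_y$ on the $D^{\ast}_1[-1,1]$-factor (this is the point of Figure \ref{fig:liouville}): after rounding corners as in \cite[Section 2.5.1]{AGEN3}, each top-handle core disk sits in $\partial X_1$ as the Legendrian lift of an exact Lagrangian into the boundary of a product with a radial $\C$-factor, and such lifts come equipped with a loose chart by \cite[Proposition 2.8]{CM} (compare \cite[Remark 2.23]{ENg}). The loose charts are present from the outset --- no isotopy of $(V,h)$ is performed, which also disposes of the compatibility issue you flag at the end. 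Your instinct that the $D^{\ast}_1$-factor is the source of flexibility is correct, but the argument must use the global Liouville geometry of that factor and the corner rounding, not a local Darboux chart or general subcriticality.
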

\begin{proof}
	We round corners as in \cite[Section 2.5.1]{AGEN3}, see Figure \ref{fig:liouville} for a comparison with the usual handle. Then applying \cite[Proposition 2.8]{CM} (compare \cite[Remark 2.23]{ENg}), we get a loose chart for each top handle.
\end{proof}

We use the lemmas above to describe the effect of adding $X_{1}$ to a Weinstein manifold stopped at $V$. Geometrically, it is clear, by canceling critical points that this corresponds to removing the stop. Here we state two results that are simple consequences of that but which are sometimes useful in calculations.

\begin{corollary}\label{cor:CEconnectsumtrivial}
	Let $X_1$ be as in Lemma~\ref{lma:V_loose} and let $(V,h)\subset \partial W$ then $CW^\ast(C(h); W \#_V X_1)$ is quasi-isomorphic to the trivial algebra.
\end{corollary}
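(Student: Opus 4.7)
The plan is to combine the Legendrian surgery formula, the push-out square of Theorem~\ref{t:vankampen}, and the looseness lemmas to force the unit of $CW^\ast(C(h); W\#_V X_1)$ to become a boundary.

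First, I would apply Theorem~\ref{t:basic} to rewrite
\[
CW^\ast(C(h); W\#_V X_1) \ \cong \ CE^\ast(\Sigma_\#(h); W\#_{V_0} X_1),
\]
where $\Sigma_\#(h)\subset \partial(W\#_{V_0} X_1)$ is the link of Legendrian attaching spheres for the top-dimensional $n$-handles of the composite $V$-handle, as in Remark~\ref{r:GPS}. Theorem~\ref{t:vankampen} then presents the right-hand side as the push-out in unital dg-algebras of
\[
CE^\ast((V,h);W) \ \longleftarrow \ CE^\ast(\partial l; V_0) \ \longrightarrow \ CE^\ast((V,h); X_1).
\]

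Next I would invoke Lemma~\ref{lma:V_loose}: the Legendrian embedding $(V,h)\subset \partial X_1$ corresponding to $V\times\{(-1,0)\}$ is loose for any handle decomposition $h$. Hence by Lemma~\ref{lma:V_loose_CE_trivial} the dg-algebra $CE^\ast((V,h); X_1)$ is quasi-isomorphic to the trivial algebra; in particular its unit is null-homologous, $1 = \partial x$ for some $x$.

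To close the argument I would make the algebraic observation that in a unital push-out of dg-algebras, if one of the two incoming algebras has null-homologous unit then so does the push-out, because the structure map is a unital dg-algebra homomorphism and therefore carries $1 = \partial x$ to $1 = \partial(\mathrm{image}\,x)$. Applied to the push-out above, this shows that the unit of $CE^\ast(\Sigma_\#(h); W\#_{V_0} X_1)$ is a boundary, so this dg-algebra is quasi-isomorphic to the trivial algebra; combining with the first step gives the claim. The only substantive ingredient is the looseness statement of Lemma~\ref{lma:V_loose}, which has already been established; everything else is a formal assembly of previously proved results.
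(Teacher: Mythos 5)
Your proof is correct, and it reaches the conclusion by a route that is genuinely, if subtly, different from the paper's. The paper's proof makes a geometric assertion first: the loose charts for the core disks $l_j\subset\partial X_1$ supplied by Lemma~\ref{lma:V_loose} give loose charts for the glued attaching link $\Sigma_{\#}(h)\subset\partial(W\#_{V_0}X_1)$ itself, so that $CE^{\ast}(\Sigma_{\#}(h);W\#_{V_0}X_1)$ is trivial outright, and the surgery isomorphism of Remark~\ref{r:GPS} then finishes. You instead keep the looseness localized to the $X_1$ side, kill $CE^{\ast}((V,h);X_1)$ by Lemmas~\ref{lma:V_loose} and~\ref{lma:V_loose_CE_trivial}, and propagate acyclicity algebraically: the structure map of the square in Theorem~\ref{t:vankampen} is a unital chain map, so $1=\partial x$ persists in the target, and a unital dg-algebra with null-homologous unit is acyclic (every cycle $y$ satisfies $y=\pm\partial(xy)$). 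Two remarks on the trade-off. Your argument is more robust in one respect: it only uses that \emph{one} arrow of the square exists and is a unital dg-algebra map carrying $1=\sum_j e_j$ to $1$ (which holds since the component idempotents of $\Sigma(h)\subset\partial X_1$ and of $\Sigma_{\#}(h)$ are indexed by the same top handles $h_j$ and the maps are inclusions on generators), so you never need to decide whether the square is a strict or homotopy push-out. The paper's argument is more direct geometrically but requires the (easy, since looseness is a local condition on a Legendrian) observation that a loose chart for a piece of each $\Sigma_{\#}(h_j)$ is a loose chart for the whole link. Both are complete proofs.
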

\begin{proof}
As in the proof of Lemma \ref{lma:V_loose_CE_trivial} the link of attaching spheres $\Sigma(h)$ of the core disks dual to the co-cores in $C(h)$ is loose. The result then follows from Lemma \ref{lma:V_loose_CE_trivial} combined with Theorem \ref{t:basic} and Theorem \ref{t:vankampen}, see also Remark \ref{r:GPS}.
\end{proof}

Assume now that $W$ is a Weinstein manifold with co-core disks $C'$ dual to its critical handles with Legendrian boundary $\partial C'\subset\partial W$. Let $(V,h)\subset \partial W$ be a Legendrian embedding such that $V\cap\partial C'=\varnothing$. Let $W_{V}$ denote $W$ stopped at $V$, then we can remove the stop by forming $W=W_{V}\# X_{1}$.  

\begin{corollary}
	Let $X_1$ be as in Lemma~\ref{lma:V_loose}. Then there is a quasi-isomorphism
	\[
	CW^\ast(C' \cup C(h); W \#_V X_1) \ \approx \ CW^\ast(C'; W).
	\]
\end{corollary}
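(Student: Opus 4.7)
The strategy is to apply the push-out of Remark~\ref{r:GPS} to the pair $(W, X_1)$ glued along $V$ and exploit the acyclicity of $CE^\ast((V,h); X_1)$, carefully enhanced to include the pre-existing co-cores $C'$ of $W$.

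First, I would establish the key vanishing. By Lemma~\ref{lma:V_loose} the Legendrian embedding $(V,h)\subset\partial X_1$ is loose, so Lemma~\ref{lma:V_loose_CE_trivial} gives $CE^\ast((V,h);X_1)\approx 0$. Theorem~\ref{t:basic} transfers this vanishing to the wrapped Floer cohomology of the dual co-cores, yielding $CW^\ast(C(h); X_{1,V})\approx 0$.

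Second, I would apply the push-out of Remark~\ref{r:GPS} to the gluing $W \#_V X_1$, enhanced to include $C'$ as an external object. The hypothesis $V\cap\partial C'=\varnothing$, combined with the no-crossing Lemma~\ref{l:nocross}, ensures that holomorphic curves with boundary on $C'$ do not interact with the $V$-handle, so $C'$ decouples coherently from the push-out square. With the vanishing from the first step, the right column of the enhanced square collapses, giving
\[
	CW^\ast(C' \cup C(h); W\#_V X_1) \ \approx \ \operatorname{cofib}\bigl( CW^\ast(c;V)\to CW^\ast(C'\cup C(h); W_V) \bigr).
\]
Applying the same argument to the set-up identification $W = W_V \#_V X_1$ (itself a stop-removal $X_1$-attachment with trivial $X_1$-side by the first step) produces the same cofiber description of $CW^\ast(C'; W)$. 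Matching the two identifications yields the claimed quasi-isomorphism.

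The main obstacle is extending Remark~\ref{r:GPS} to incorporate the pre-existing co-cores $C'$, since that statement is formulated only for the new co-cores coming from the $V$-handle. Justifying the enhancement amounts to verifying, via Lemma~\ref{l:nocross} and the hypothesis $V\cap\partial C'=\varnothing$, that all relevant Floer disks with a boundary component on $C'$ remain confined to a single side of the gluing and thus interact with the $V$-handle only through the push-out data; this should parallel the proof of Theorem~\ref{t:vankampen}, with $C'$ playing the role of additional asymptotic Lagrangian data on the $W$-side.
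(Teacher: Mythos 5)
Your first step is correct and matches the paper's ingredients: looseness of $(V,h)\subset\partial X_1$ (Lemma~\ref{lma:V_loose}) kills $CE^\ast((V,h);X_1)$ (Lemma~\ref{lma:V_loose_CE_trivial}), and Theorem~\ref{t:basic} transfers this to the co-cores. But the final step has a genuine gap. The push-out of Remark~\ref{r:GPS} (even granting the enhancement by $C'$, which is plausible via Lemma~\ref{l:nocross} and $V\cap\partial C'=\varnothing$) only ever computes the endomorphism algebra of a collection that \emph{includes} the new co-cores $C(h)$. Your second application of it, to ``$W=W_V\#_V X_1$'', is the same gluing as the first and therefore produces a description of $CW^\ast(C'\cup C(h);W)$ again, not of $CW^\ast(C';W)$. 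Matching the two cofiber descriptions is circular: at no point have you explained how to drop $C(h)$ from the generating collection, which is the actual content of the corollary.

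The missing idea is Corollary~\ref{cor:CEconnectsumtrivial}: $CW^\ast(C(h);W\#_V X_1)$ is quasi-isomorphic to the trivial algebra, because the attaching link $\Sigma_{\#}(h)$ is loose in $W\#_{V_0}X_1$. Combined with the geometric identification $W\#_V X_1\cong W$ (attaching $X_1$ cancels the stop handles), this says $C(h)$ is a zero object of the wrapped Fukaya category of $W$: its unit is nullhomotopic, so every morphism space $CW^\ast(C',C(h))$ and $CW^\ast(C(h),C')$ is acyclic, and the endomorphism algebra of $C'\cup C(h)$ collapses onto that of $C'$. This is the route the paper takes (citing Corollary~\ref{cor:CEconnectsumtrivial}, Theorem~\ref{t:vankampen}, and Theorem~\ref{t:basic}); your cofiber-matching cannot substitute for it, since no amount of push-out bookkeeping removes $C(h)$ from the object set without the acyclicity of its endomorphism algebra in the glued manifold.
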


\begin{proof}
	Follows from Corollary \ref{cor:CEconnectsumtrivial}, Theorem \ref{t:vankampen} and Theorem \ref{t:basic}, see also Remark \ref{r:GPS}.
\end{proof}

\section{Generalizations}
In this section we discuss some natural generalizations of the results in earlier sections. 
\subsection{Legendrian submanifolds with boundary}\label{sec:Legwbdry}
Consider a Legendrian embedding $(V,h)\subset \partial W$ as above. Let $\Lambda$ be a $(n-1)$-manifold with boundary $\partial\Lambda$. Consider a Legendrian embedding $\Lambda\to\partial W$ such that $\partial\Lambda\subset\partial V$ is a Legendrian embedding as well. We construct a non-compact Legendrian $\Lambda\subset \partial W_{V}$ with ideal boundary $\partial\Lambda\subset 0\times \partial V\subset\R\times V$ by adding $\partial\Lambda\times [-1,1]$ and the Legendrian lift of the positive cone on $\partial\Lambda$ in $\R\times\R\times \partial V$. We define the Chekanov--Eliashberg dg-algebra
\[ 
CE^{\ast}(\Lambda;V;W)
\]
as the ordinary Chekanov--Eliashberg dg-algebra of $\Lambda\subset \partial W_{V}$. 

\begin{lemma}\label{l:Legsubalg}
The dg-algebra $CE^{\ast}(\Lambda;V;W)$ contains a subalgebra canonically isomorphic to $CE^{\ast}(\partial\Lambda; V)$.
\end{lemma}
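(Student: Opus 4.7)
The plan is to mimic the strategy used for the subalgebra $CE^{\ast}(\partial l;V_{0})\subset CE^{\ast}(\Sigma(h);W_{V}^{0})$ in Corollary~\ref{c:dgsubalg}, replacing the attaching sphere $\Sigma(h)$ by the non-compact Legendrian $\bar\Lambda\subset\partial W_{V}$ obtained from $\Lambda$ by attaching $\partial\Lambda\times[-1,1]$ across the $V$-handle and the Legendrian lift of the positive cone on $\partial\Lambda$ in the contactization end $\R\times V$. The Legendrian lift of the positive cone is simply the trivial cylinder $\{0\}\times\R_{+}\times\partial\Lambda$ inside the local model $\R\times\R_{+}\times\partial V\subset\R\times V$ (since $\partial\Lambda$ is Legendrian in $\partial V$, the restriction of the Liouville form vanishes on the cylinder).

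First I would analyse the Reeb dynamics of $\bar\Lambda$. Exactly as in Lemma~\ref{l:middlesubcritLeg}, choosing a contact form on $\R\times V$ adapted to the contactization structure so that the Reeb vector field preserves the level sets of the symplectization coordinate, Reeb chords of the cylindrical part of $\bar\Lambda$ lie over the `zero section' and are in a natural grading-preserving $1$-$1$ correspondence with Reeb chords of $\partial\Lambda\subset\partial V$. Combining this with the action argument of Lemma~\ref{l:generators}, for any action bound $\mathfrak{a}>0$, shrinking the $V$-handle sufficiently and perturbing $\Lambda$ slightly gives a splitting of Reeb chords of $\bar\Lambda$ of action $<\mathfrak{a}$ into two classes: the `short' chords coming from $\partial\Lambda\subset\partial V$ via the cone, and the `long' chords which are chords of $\Lambda$ itself in $\partial W$, with the short chords generating the proposed subalgebra.

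Next I would show that rigid holomorphic disks with all punctures at short chords lie entirely in the cylindrical region and correspond to rigid disks in the symplectization $\R\times\partial V$ with boundary on $\partial\Lambda$. This is the analogue of Lemma~\ref{l:middlesubcrit} combined with Lemma~\ref{l:nocross}: with a handle-adapted almost complex structure extended from $V$ to the contactization end and to the $V$-handle as in Section~\ref{ssec:holcurveinhandle}, the slices $V_{(x,y)}$ (or their cylindrical analogues in the contactization end) are $J$-complex. Positivity of intersection then forces a curve with positive puncture of small action to lie over the `zero section' of the cone, since otherwise it would need an extra positive puncture away from the zero section, which is incompatible with the action bound and the absence of short Reeb orbits (Lemma~\ref{l:handlesandnoorbits}). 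Transversality follows from the same stabilization argument as in the proof of Lemma~\ref{l:middlesubcrit}. In dimension $n=2$ one must additionally invoke the internal-algebra identification recalled in Remark~\ref{r:dimV_0=2}.

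From the two steps above, both the generators and the differential of the subalgebra generated by short chords of $\bar\Lambda$ agree with those of $CE^{\ast}(\partial\Lambda;V)$, and the product (concatenation of composable words of chords) is clearly preserved by the identification. The main obstacle is arranging everything compatibly in a single auxiliary geometric model: choosing a contact form on $\partial W_{V}$ that is simultaneously index-definite on the cone region, matches the standard form on $\partial V$ to all relevant orders, and allows transverse cut-out disks, while at the same time verifying that the action filtration cleanly separates the short from the long chords so that no disk with positive puncture at a short chord can escape the cone region.
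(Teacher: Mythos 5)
Your overall strategy---single out a distinguished class of ``short'' Reeb chords in bijection with the chords of $\partial\Lambda\subset\partial V$, then localize the relevant holomorphic disks using the $J$-complex fibers and positivity of intersection, with transversality by stabilization---is exactly the paper's, which states that the proof is a repetition of Lemma \ref{l:middlesubcrit}. However, you have placed the generators of the subalgebra in the wrong part of $\partial W_{V}$. You locate them on the conical piece of the non-compact Legendrian inside the contactization end $\R\times V$, asserting that after adapting the contact form these chords ``lie over the zero section'' in bijection with chords of $\partial\Lambda\subset\partial V$. But in the contactization $\R\times V$ with contact form $d\zeta+\lambda$ the Reeb vector field is $\partial_{\zeta}$, and, as you yourself note, $\lambda$ restricts to zero on the cone $\R_{+}\times\partial\Lambda$, so its Legendrian lift sits at a single constant value of $\zeta$ and is embedded; it therefore carries no Reeb chords at all. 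The chords you want live only at the ideal boundary of the cone, at infinity, and are not generators of the \emph{ordinary} dg-algebra of $\Lambda\subset\partial W_{V}$ used to define $CE^{\ast}(\Lambda;V;W)$. Promoting them to honest generators would require the ``chords in the negative end'' formalism of Section \ref{ssec:contactizations} and \cite{CD-RGG}, which is a different definition that would then itself have to be compared to the one in the paper.

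The correct location is the middle of the $V$-handle. In the model $G_{\delta}(V)$ of Section \ref{sec:handle_decomp_V}, the piece $\partial\Lambda\times[-1,1]$ of the non-compact Legendrian appears as a Legendrian $\overline{\partial\Lambda}$ lying over $x=0$, and by Lemma \ref{l:Reebinhandle} the locus $(x,y)=(0,0)$, $\phi(v)=\delta$ is Reeb-invariant with flow equal to the Reeb flow of $\partial V$. The analogue of Lemma \ref{l:middlesubcritLeg} then gives the grading-preserving bijection between Reeb chords of $\overline{\partial\Lambda}$ (all of which lie over $(x,y)=0$) and Reeb chords of $\partial\Lambda\subset\partial V$. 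Once the generators are placed there, the rest of your argument---localization of disks to the fibers $V_{(0,y)}$ via Lemma \ref{l:complexproj} and positivity of intersection, identification with disks in $\R\times\partial V$ with boundary on $\R\times\partial\Lambda$ anchored in $V$, the stabilization argument for transversality, and the $n=2$ caveat of Remark \ref{r:dimV_0=2}---goes through verbatim as in Lemma \ref{l:middlesubcrit}, which is precisely what the paper invokes.
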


\begin{proof}
As before, the sub-algebra is generated by Reeb chords in the middle of the handle. The proof is a repetition of the proof of Lemma \ref{l:middlesubcrit}. 
\end{proof}

\begin{figure}[!htb]
	\centering
	\includegraphics{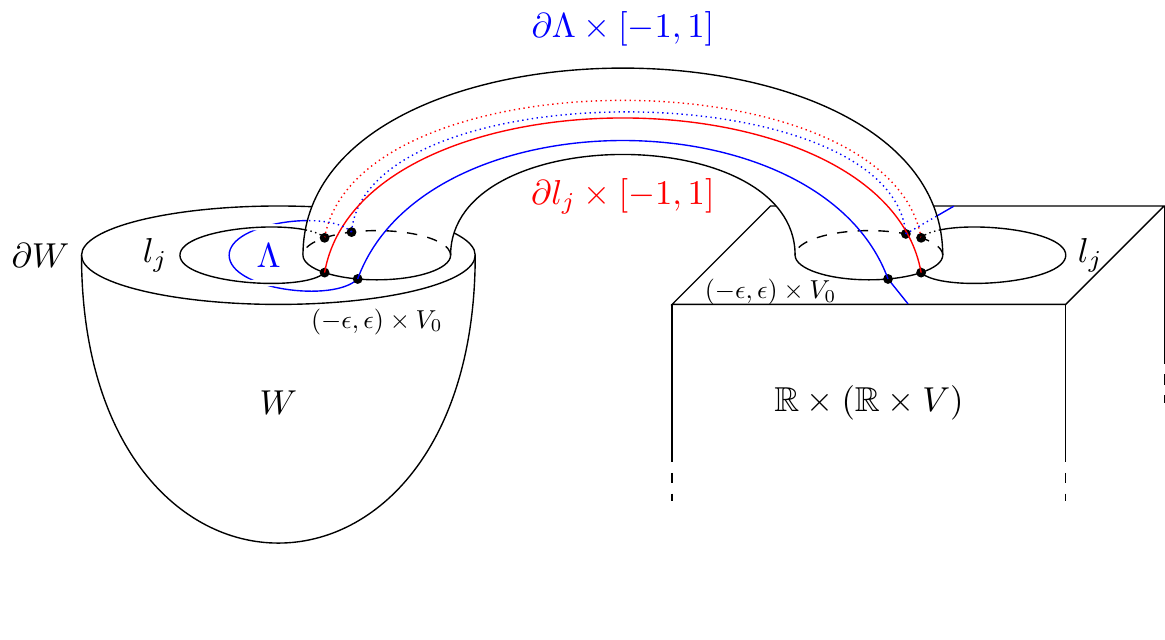}
	\caption{The non-compact Legendrian $\Lambda \subset \partial W_V$ in the definition of $CE^\ast(\Lambda; V ; W)$.}
\end{figure}

We next give a surgery description of $CE^{\ast}(\Lambda;V;W)$. By general position, we may
assume that $\partial\Lambda\subset \partial V$ is disjoint from the boundary $\partial c$ of co-core disks of the top handles in $h$. This last condition means that we can consider $\partial\Lambda\subset \partial V_{0}$, disjoint from the attaching spheres $\partial l$ of the top handles in $V$. This in turn means that we can compute the dg-algebra by Legendrian surgery. We write $\Lambda\to l$ to denote either a Reeb chord from $\Lambda$ to $l$ in $\partial W$ or a Reeb chord from $\partial\Lambda$ to $\partial l$ in $\partial V_{0}$ and use the notation $l\to l$, $l\to\Lambda$, and $\Lambda\to\Lambda$ similarly. 

\begin{proposition}\label{p:surgeryforLeg}
In terms of data in $W$ and $V_{0}$, the dg-algebra $CE^{\ast}(\Lambda;V;W)$ admits the following description. It is generated by composable words of Reeb chords of the form $\Lambda\to l\to l\to\dots\to l \to\Lambda$. The differential is induced by the Legendrian dg-algebra differential of $\Lambda\cup\Sigma(h)\subset \partial_{+} W_{V}^{0}$ and the subalgebra of Lemma \ref{l:Legsubalg} is generated by composable words of Reeb chords $\partial \Lambda\to\partial l\to\dots\to\partial l\to\partial\Lambda$ in $\partial V_{0}$.
 \end{proposition}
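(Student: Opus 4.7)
The plan is to apply the Legendrian surgery formula of \cite{BEE,EkholmHol,EL} to the pair $(W_{V}^{0},W_{V})$, where $W_{V}$ is obtained from $W_{V}^{0}$ by attaching Weinstein $n$-handles along the Legendrian link $\Sigma(h)\subset\partial_{+}W_{V}^{0}$. By the general position assumption that $\partial\Lambda\subset\partial V_{0}$ is disjoint from $\partial l$, the non-compact Legendrian extension $\widetilde\Lambda\subset \partial W_{V}$ used to define $CE^{\ast}(\Lambda;V;W)$ may be arranged to lie in $\partial W_{V}^{0}$, disjoint from the co-core spheres of the attached $n$-handles and disjoint from $\Sigma(h)$. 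Lemma \ref{l:handlesandnoorbits} ensures there are no closed Reeb orbits in the negative end of $W_{V}^{0}$, so anchoring is unobstructed.

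First, I invoke the Legendrian surgery formula in its relative form, applied to $\widetilde\Lambda$ in the cobordism from $\partial W_{V}^{0}$ to $\partial W_{V}$ obtained by attaching handles along $\Sigma(h)$. This produces a quasi-isomorphism between $CE^{\ast}(\widetilde\Lambda;W_{V})$ and the subcomplex of $CE^{\ast}(\widetilde\Lambda\cup\Sigma(h);W_{V}^{0})$ generated by composable words of Reeb chords beginning and ending on $\widetilde\Lambda$ with all intermediate endpoints on $\Sigma(h)$, with differential inherited from the dg-algebra differential of $\widetilde\Lambda\cup\Sigma(h)$. This is precisely the bookkeeping $\widetilde\Lambda\to\Sigma(h)\to\cdots\to\Sigma(h)\to\widetilde\Lambda$ in the statement.

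Next, I decompose the generators according to location. By Lemma \ref{l:generators} applied to the union $\widetilde\Lambda\cup\Sigma(h)$, after arbitrarily small perturbation and for sufficiently thin $V_{0}$-handle, Reeb chords of $\widetilde\Lambda\cup\Sigma(h)$ of bounded action split into two classes: (i) chords lying in $\partial W$ (or its analogue on the negative-end side), which connect $\Lambda$ and the core disks $l$ and thus give generators of types $\Lambda\to l$, $l\to l$, $l\to\Lambda$, and $\Lambda\to\Lambda$; and (ii) chords in the middle of the $V_{0}$-handle, which by Lemma \ref{l:middlesubcritLeg} (applied to $\overline{\partial l}\cup\overline{\partial\Lambda}$) are in grading-preserving bijection with Reeb chords of $\partial l\cup\partial\Lambda\subset\partial V_{0}$. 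Unpacking the words $\widetilde\Lambda\to\Sigma(h)\to\cdots\to\widetilde\Lambda$ into these two classes yields exactly the composable words of the stated form.

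For the subalgebra statement, the key input is the no-crossing property of Lemma \ref{l:nocross} together with Lemma \ref{l:middlesubcrit}, extended to the present setting: any holomorphic disk in $\R\times\partial_{+}W_{V}^{0}$ with boundary on $\widetilde\Lambda\cup\Sigma(h)$ and positive puncture at a Reeb chord of the interior-of-handle type $\partial\Lambda\to\partial l\to\cdots\to\partial l\to\partial\Lambda$ must itself stay entirely over $(x,y)=0$, since positivity of intersection with the $J$-complex hypersurfaces $V_{(x,y)}$ would otherwise force a positive asymptotic away from the middle of the handle, contradicting the action bound. These disks are then canonically identified with $J_{V_{0}}$-holomorphic disks in $\R\times\partial V_{0}$ with boundary on $\partial l\cup\partial\Lambda$, giving the canonical identification with $CE^{\ast}(\partial\Lambda;V_{0})$ of Lemma \ref{l:Legsubalg}. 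The main obstacle is ensuring that the Legendrian surgery formula goes through for the non-compact Legendrian $\widetilde\Lambda$; this is handled by the fact that $\widetilde\Lambda$ has a cylindrical end modeled on the Reeb-invariant cone over $\partial\Lambda$ in $\R\times(\R\times V)$, where the Reeb flow has no periodic orbits, so the asymptotic analysis of \cite{EkholmHol,EL} applies verbatim once one fixes a cofinal sequence of action cut-offs and a handle parameter $\epsilon$ thin enough that Lemma \ref{l:generators} holds at each cut-off.
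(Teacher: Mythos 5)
Your proposal is correct and follows essentially the same route as the paper: the paper's proof simply invokes the relative Legendrian surgery formula (\cite[Theorem 5.10]{BEE}) for the non-compact Legendrian in $W_{V}$ obtained by attaching handles along $\Sigma(h)\subset\partial_{+}W_{V}^{0}$, by an argument analogous to Theorem \ref{t:basic}, with the subalgebra statement already supplied by Lemma \ref{l:Legsubalg}. Your write-up just makes explicit the generator decomposition (Lemmas \ref{l:generators} and \ref{l:middlesubcritLeg}) and the no-crossing/localization argument (Lemmas \ref{l:nocross} and \ref{l:middlesubcrit}) that the paper leaves implicit.
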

\begin{proof}
	This follows from \cite[Theorem 5.10]{BEE} by an argument directly analogous to the proof of Theorem \ref{t:basic}. 
\end{proof}

As in the absolute case we can join Legendrians with the same boundary in two copies of $V$ in different manifolds. More precisely, in the situation of Theorem \ref{t:vankampen}, if $W$ and $W'$ contains Legendrians $\Lambda\subset \partial W$ and $\Lambda'\subset\partial W'$ with common boundary $\partial\Lambda=\partial \Lambda'\subset \partial V$. Then we can join $\Lambda$ and $\Lambda'$ via $\partial\Lambda\times [-1,1]$ across the $V$-handle $D^{\ast}_{\epsilon} V$ to form the closed Legendrian submanifold $\Lambda\#_{\partial V}\Lambda'$. 
We next give a surgery description of the dg-algebra $CE^{\ast}(\Lambda\#_{\partial V}\Lambda',W\#_{V} W')$. As above, by general position $\partial\Lambda\subset\partial V_{0}$ and $\Lambda\#_{\partial V}\Lambda'$ is obtained from the Legendrian $\Lambda\#_{\partial V_{0}}\Lambda\subset \partial_{+}W_{V}^{0}$ after surgery on $\Sigma_{\#}(h)$. We use notation analogous to that in Proposition \ref{p:surgeryforLeg}.

\begin{proposition}\label{p:Legsurgeryconnsum}
	In terms of data in $W$, $W'$, and $V_{0}$, the dg-algebra $CE^{\ast}(\Lambda\#_{\partial V}\Lambda';W\#_{V} W')$ admits the following description. It is generated by composable words of Reeb chords of the form $\underline{\Lambda}\to l\to l\to\dots\to l \to\underline{\Lambda}$, where $\underline{\Lambda}$ denotes $\Lambda$ or $\Lambda'$. The differential is induced by the Legendrian dg-algebra differential of $(\Lambda\#_{\partial V_{0}}\Lambda')\cup  \Sigma(h)  \subset \partial_{+} W_{V}^{0}$. Furthermore, the subalgebra generated by composable words of Reeb chords $\partial \Lambda\to\partial l\to\dots\to\partial l\to\partial\Lambda$ in $\partial V_{0}$ is isomorphic to $CE^{\ast}(\partial\Lambda;V)$.
\end{proposition}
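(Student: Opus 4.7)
The plan is to combine the arguments used to prove Theorem \ref{t:vankampen} and Proposition \ref{p:surgeryforLeg}, exploiting that the statement is simultaneously a connect-sum statement across the $V_{0}$-handle and a Legendrian surgery description along the attaching link $\Sigma_{\#}(h)$.

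First I would set up the geometry as in the proof of Theorem \ref{t:vankampen}. By general position, arrange $\partial\Lambda=\partial\Lambda'\subset\partial V_{0}$ disjoint from the attaching $(n-2)$-spheres $\partial l$ of the top handles of $h$. Form $W\#_{V_{0}}W'$ by attaching the $V_{0}$-handle $V_{0}\times D^{\ast}_{\epsilon}[-1,1]$ along $(-\epsilon,\epsilon)\times V_{0}\subset \partial W\sqcup\partial W'$, so that $\Lambda$ and $\Lambda'$ are joined by $\partial\Lambda\times[-1,1]$ into a closed Legendrian $\Lambda\#_{\partial V_{0}}\Lambda'\subset\partial(W\#_{V_{0}}W')$. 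The manifold $W\#_{V}W'$ is then obtained from $W\#_{V_{0}}W'$ by attaching critical Weinstein $n$-handles along $\Sigma_{\#}(h)$, and under this surgery $\Lambda\#_{\partial V_{0}}\Lambda'$ becomes $\Lambda\#_{\partial V}\Lambda'$.

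Next I would invoke the Legendrian surgery formula of \cite[Theorem 5.10]{BEE} (or equivalently repeat the cobordism-map argument outlined in Remark \ref{r:overviewLegsurg}) for the Legendrian $(\Lambda\#_{\partial V_{0}}\Lambda')\cup\Sigma_{\#}(h)\subset\partial_{+}(W\#_{V_{0}}W')$. The output identifies $CE^{\ast}(\Lambda\#_{\partial V}\Lambda';W\#_{V}W')$ with the complex generated by composable words of chords $\underline{\Lambda}\to l\to l\to\cdots\to l\to\underline{\Lambda}$, with $\underline{\Lambda}\in\{\Lambda,\Lambda'\}$ (the two halves of each $\Sigma_{\#}(h_{j})$ permit chords to enter or exit on either the $W$-side or the $W'$-side), with differential induced by the full dg-algebra differential of $(\Lambda\#_{\partial V_{0}}\Lambda')\cup\Sigma_{\#}(h)$ in $\partial_{+}W_{V}^{0}$. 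Then I would identify the subalgebra generated by Reeb chords lying in the middle of the $V_{0}$-handle: by Lemmas \ref{l:middlesubcrit}, \ref{l:generators}, and \ref{l:nocross} (together with Remark \ref{r:dimV_0=2} in the case $n=2$), these chords are in grading-preserving bijection with the chords of $\partial\Lambda\cup\partial l\subset\partial V_{0}$, and the rigid holomorphic disks on them agree, so this subalgebra is canonically isomorphic to the Chekanov--Eliashberg dg-algebra of $\partial\Lambda\cup\partial l$ in $\partial V_{0}$. Applying the Legendrian surgery formula one dimension lower, passing from $\partial\Lambda\cup\partial l\subset\partial V_{0}$ to $\partial\Lambda\subset\partial V$ by attaching critical handles along the attaching spheres $\partial l$, rewrites this subalgebra as $CE^{\ast}(\partial\Lambda;V)$, as in Proposition \ref{p:surgeryforLeg}.

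The main obstacle is the same one dealt with in the proof of Theorem \ref{t:vankampen}: one must rule out holomorphic curves with positive puncture on $(\Lambda\#_{\partial V_{0}}\Lambda')\cup\Sigma_{\#}(h)$ that cross the $V_{0}$-handle between the $W$-side and the $W'$-side. This is handled by Lemma \ref{l:nocross} and the positivity of intersections against the $J$-complex hypersurfaces $V_{(x,y)}$ of Lemma \ref{l:complexproj}: any such disk would force a positive asymptote at a Reeb chord or orbit passing through the handle, which is excluded below the fixed action level $\mathfrak{a}$ by Lemma \ref{l:generators}. Once this confinement is in place, the surgery formula and the identification of the subalgebra above combine to give the claimed description.
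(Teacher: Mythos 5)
Your proposal is correct and follows essentially the same route as the paper, whose entire proof is the citation of \cite[Theorem 5.10]{BEE} combined implicitly with the arguments already given for Theorem \ref{t:vankampen} and Proposition \ref{p:surgeryforLeg}. You have simply unpacked that one-line proof — the surgery formula applied to $(\Lambda\#_{\partial V_{0}}\Lambda')\cup\Sigma_{\#}(h)$, the handle-confinement via Lemmas \ref{l:complexproj} and \ref{l:nocross}, and the identification of the middle-of-handle subalgebra with $CE^{\ast}(\partial\Lambda;V)$ — in a way that is faithful to the intended argument.
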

\begin{proof}
	This again follows from \cite[Theorem 5.10]{BEE}. 
\end{proof}


\subsection{Floer cohomology of singular exact Lagrangian fillings and cobordisms}\label{sec:cobordism}
In this section we outline a construction of the counterpart of `cobordism maps' for usual dg-algebras in the setting of singular Legendrians, induced by corresponding singular Lagrangians. A more complete treatment of the subject will appear elsewhere. We refer to Section \ref{sssec:singlag} for illustrations of the construction with detailed calculations in concrete examples.

Consider a Weinstein manifold $X$ with contactization $\R\times X$. We say that an embedding of a Weinstein domain $V$ with handle decomposition $h$, $(V,h)\to X$ is \emph{exact} if it lifts to a Legendrian embedding into the contactization. Note that such a lift, when restricted to the skeleton, is determined up to a translation in $\R$, one for each connected component of $V$.

\subsubsection{Floer cohomology}\label{sssec:Floercohomology}
To motivate our next definition we consider smooth exact Lagrangians $L_{0},\dots,L_{k}$ in $X$. Then the operation $\mathfrak{m}_{k}$ on the Floer cohomology complex $CF^{\ast}(L_{0}\cup\dots\cup L_{k};X)$ that counts holomorphic disks with $k$ inputs and one output, can be defined as the dual of the differential in the dg-algebra of a Legendrian link $\tilde L$. Here $\tilde L$ is obtained by shifting the Legendrian lifts of $L_{j}$ so that $L_{j}$ sits above $L_{k}$ for all $k\le j$. The Floer cohomology complex $CF^{\ast}(L;X)$ of any Lagrangian $L\subset X$ carries the structure of an $A_{\infty}$-algebra. We use parallel copies of $L$ as in \cite{EL} to define operations and in this way the Legendrian lift approach works in general for exact Lagrangian submanifolds.

We consider exact embeddings $(V,h)\subset X$ as singular Lagrangian embeddings of the skeleton of $(V,h)$ and \emph{define} the Floer cohomology $CF^{\ast}((V,h);X)$ as the $A_{\infty}$-algebra with differential and operations given by the duals of the differential in $CE^{\ast}((\tilde V,\tilde h),\R\times(\R\times X))$ for lifts $(\tilde V,\tilde h)$ into $\R\times X$, where the lifts are related exactly as for smooth Lagrangians. This then leads to new ways of representing objects in the Fukaya category of $X$, see \cite{DRET} for immersed Lagrangian 2-spheres treated in this way.

\subsubsection{Energy filtration, fillings, and cobordisms}\label{sssec:cobordisms}
We next consider the case when the singular Lagrangian is allowed to have boundary at infinity. Again we start in the smooth case. Let $L\subset X$ be an exact Lagrangian submanifold with Legendrian boundary $\partial L$. Then there is a natural map 
\[ 
CE^{\ast}(\partial L;X)\longrightarrow (CF^{\ast}(L;X))',
\]
where right hand side is the (completed) co-algebra dual to the Floer cohomology algebra, see \cite{EL}, which is defined by counting curves with one positive puncture at a Reeb chord and several negative punctures at intersection points of a system of parallel copies of $L$. Here it is natural to replace the dual in the right hand side by the Chekanov--Eliashberg dg-algebra of a Legendrian lift of $L$ into $\R\times X$, as in Section \ref{sssec:Floercohomology}, and we get the corresponding map 
\begin{equation}\label{eq:Floermapdgaver} 
CE^{\ast}(\partial L;X)\longrightarrow CE^{\ast}(L;\R\times(\R\times X)).
\end{equation}
The projection of this chain map to the subalgebra generated by the unit in $L$ is called an \emph{augmentation}. 

In case $\partial L$ is a sphere we may think of the augmentation in terms of Legendrian surgery as follows. Let the co-core of the handle attached be $C$ and let $\widehat L$ be the Lagrangian $L$ with the core disk attached. Then the Floer complex $CF^{\ast}(C,L)$ has only one generator, the unique intersection point between the core and the co-core. However, the Floer complex $CF^{\ast}(C,\widehat L)$ is a module over $CW^{\ast}(C)$, where the module structure is obtained by counting disks with several input (positive) punctures at generators of $CW^{\ast}(C)$, one input generator of $CF^{\ast}(C,\widehat L)$, and one output generator of $CF^{\ast}(C,\widehat L)$. By SFT-stretching it follows that the surgery isomorphism $CW^{\ast}(C)\to CE^{\ast}(\partial L;X)$ takes the module structure to the augmentation.

The cobordism map on the form \eqref{eq:Floermapdgaver}, generalizes to the singular case. Consider a Weinstein domain $K\subset X$ such that in the ideal boundary of $X$, $K$ agrees with $V^{+}\times ((-\epsilon,\epsilon)\times\R)$, or in terms of a compact model of $K$ and $X$, there is some region near the boundary of $X$ where $K$ agrees with $V\times D^{\ast}_{\epsilon}(-\eta,0]$, where $V\times D^{\ast}_{\epsilon}(-\eta,0]|_{\{0\}}$ lies in $\partial X$. 

We consider a handle decomposition $H$ of $K$ with, except for standard handles, also has handles with boundary in the handle decomposition $h^{+}$ of $V^{+}$. Here, the $k$-dimensional core $\Delta$ of a handle with boundary from $H$ has boundary given by $(k-1)$-dimensional core disks $\delta^{+}$ in handles from $h$. In the compact model above it means that there is a neighborhood of the boundary of $(\eta,0]$ where $\Delta$ is a product, $\delta^{+}\times(-\eta,0]$. 

With such a handle structure we can then lift $K$ to a Legendrian embedding into $\R\times X$, and add a $K$-handle $K\times D^{\ast}_{\epsilon}[-1,1]$ to $X\times\R$. We need to explain what this looks like at 
the boundary. To this end we consider two intervals $[-1,1]'$ and $[-1,1]''$. At the boundary we add the 
$V$-handle $V\times D^{\ast}_{\epsilon}[-1,1]'$, where the fiber at $-1\in [-1,1]$ 
is attached at $0\in(-\eta,0]$ and where we use the standard Liouville vector field in the handle with a saddle point at $(0,0)$. The $K$-handle now looks like $(V\times D^{\ast}_{\epsilon}[-1,1]')\times D^{\ast}_{\epsilon}[-1,1]''$, where $D^{\ast}_{\epsilon}[-1,1]''$ correspond to the Reeb and symplectization directions in $X\times \R\times\R$. We start from the Liouville vector field that is
radial along fibers in $D^{\ast}_{\epsilon}[-1,1]''$. This gives a Bott situation with a family of Reeb chords and orbits along the $0$-section. We impose the boundary condition that these are perturbed out by a Liouville vector field that point into the handle at the point $-1\in[-1,1]$ where it is attached. Imposing this boundary condition at all handles we can extend by half infinite lines and consider the boundary of the handles to form an ideal boundary of $(X,K)$.  

Note that using this model, the attaching locus for the top-dimensional handle cores $L$ in $H$ is a Legendrian cobordism $\partial L$ which near the boundary looks like the product of the $0$-section in $D^{\ast}_{\epsilon}[-1,1]''$ and $\partial l^{+}$, and hence after the perturbation has Legendrian ends in the attaching spheres $\partial l^{+}$, where $l^{+}$ are the top-dimensional core disks in $V$ and $\partial l^{+}$ their attaching spheres in $\partial V_{0}$.

We define the dg-algebra $CE^{\ast}((K,H);\R\times(\R\times X))$ in parallel with the dg-algebra above. It is generated by Reeb chords of the top handles $L$ in $H$ inside $\R\times X$ and Reeb chords of the Legendrian attaching cobordisms $\partial L$ of these handles. In this setting we think of the Chekanov--Eliashberg dg-algebra $CE^{\ast}((V^{+},h^{+});X)$ as generated by Reeb chords in $\partial X$ and of Reeb chords inside the $V_{0}$-handle as sitting at the ideal boundary of the positive symplectization end of $X$ and of $\partial K_{0}$, respectively. Counting disks with one positive puncture at these chords at infinity then gives a chain map
\[ 
CE^{\ast}((V^{+},h^{+}); X)\longrightarrow CE^{\ast}((K,H);\R\times(\R\times X)).
\]  
By shrinking the handle we find that the sub-algebra $CE^{\ast}(\partial l^{+},\partial V_{0})$ maps to the sub-algebra $CE^{\ast}(\partial L;\partial K_{0})$. 

Also the discussion about augmentations have counterparts in this set up. Here we consider the Floer cohomology with the fiber of the singular Lagrangian. Consider the case that there are no Reeb chords of $L$ in $X\times\R$ then the counterpart of the augmentation is the dg-algebra map above followed by the projection to the subalgebra $CE^{\ast}(\partial L;\partial K_{0})$ which gives the module structure of the Floer cohomology $CF^{\ast}(C,K)$ that must now be viewed as a Floer cohomology 'with coefficients' in the dg-algebra of the link of the singularity of $K$. As the Floer cohomology itself is again very simple, the augmentation naturally takes values in the dg-algebra of this link of singularities, see Section \ref{sec:exandappl} for examples. 

\begin{figure}[!htb]
	\includegraphics{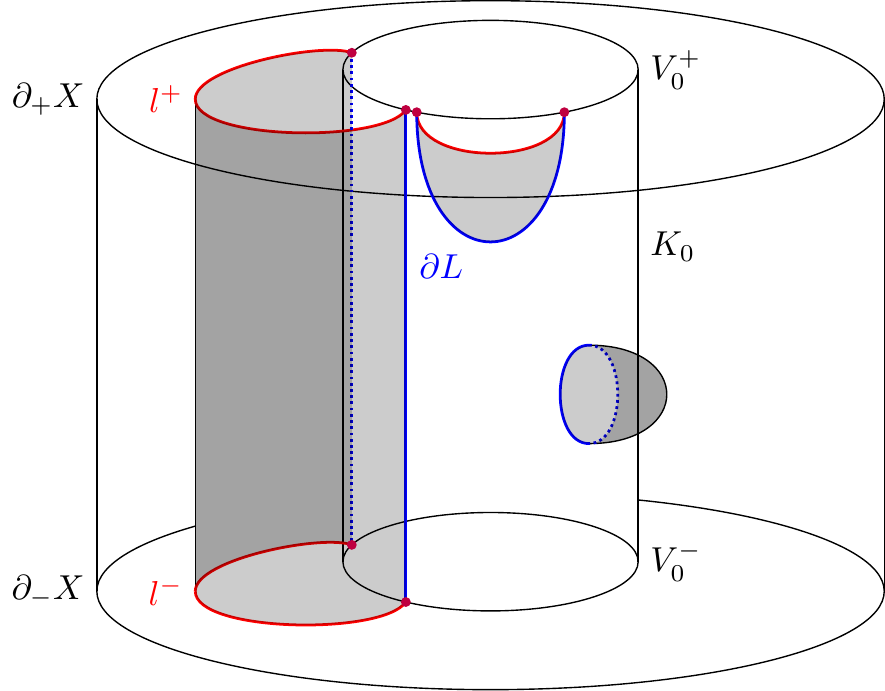}
	\caption{The subcritical part of the Weinstein domain $K$ in the Weinstein cobordism $X$. The cores of the top handles $L$ are shaded in gray, and its boundary in $\partial K_0$ is drawn in blue.}
\end{figure}

One can also extend the discussion here and allow the `cobordism' $(K,H)$ to have a negative end as well. We first take $X$ to have a negative end $\partial_{-}X$ that we assume is filled by a Weinstein manifold $X_{-}$ and in $\partial_{-}X$ we have a Legendrian embedding $(V^{-},h^{-})$. We then require that $(K,H)$ agrees with $V^{-}\times ((-\epsilon,\epsilon)\times\R)$ near the negative end and that the handle decomposition $H$ of $K$ is allowed to have handles with boundary. With such a handle structure we can then again lift $K$ to a Legendrian embedding with a cylindrical end in $\R\times X$. The only difference from the treatment above is that we perturb the Bott family over $D^{\ast}_{\epsilon}[-1,1]''$ with a Liouville vector field that points out of the cobordism in the negative end.

The attaching locus for a top-dimensional handle is then a Legendrian cobordism $\partial L$ with Legendrian ends in the attaching spheres $\partial l^{+}$ at the positive end and in $\partial l^{-}$ in the negative end.

Here we define the dg-algebra $CE^{\ast}((K,H);\R\times(\R\times X))$ in parallel with the above as generated by Reeb chords of the top handles $L$ in $\R\times X$, Reeb chords of the Legendrian attaching cobordisms $\partial L$, as well as Reeb chords between core disks $l^{-}$ and their attaching spheres $\partial l^{-}$ in the negative end. Here the two types of Reeb chords in the negative end form a subalgebra isomorphic to $CE^{\ast}((V^{-},h^{-});X_{-})$.

We again think of the Chekanov--Eliashberg dg-algebra of $(V^{+},h^{+})$ as generated by Reeb chords of $l^{+}$ in $\partial X$ and Reeb chords of $\partial l^{+}$ as sitting at the ideal boundary of the positive symplectization end of $X$ and of $\partial K_{0}$, respectively. Counting disks with one positive puncture at these chords at infinity then again gives a chain map
\[ 
CE^{\ast}((V^{+},h^{+}); X)\longrightarrow CE^{\ast}((K,H);\R\times(\R\times X)),
\]  
where there are now chords and disks at the negative end in the right hand side.

\section{Computations, examples, and applications}\label{sec:exandappl}
In this section we first describe a method for computing the differential in Chekanov--Eliashberg dg-algebras when there is a global Reeb-projection. We then study a number of examples including non-existence results for Lagrangian fillings with restricted singularities and illustrations of how dg-algebras of singular Legendrians contain information of nearby smooth Legendrians. 
\subsection{The differential for contactizations}\label{ssec:contactizations}
In this section we consider a useful way to compute $CE^{\ast}((V,h);W)$ in the case that $\partial W$ is a contactization. Consider thus the case when $W=\R\times(\R\times P)$, where $P$ is an exact symplectic manifold, $\R\times P$ its contactization and $\R\times(\R\times P)$ the further symplectization. Note that there are no Reeb orbits in $\R\times P$ which allows us to work without anchoring.

Assume that $V\subset \R\times P$ is a generic Legendrian embedding so that $\pi|_{V_{0}}$, where $\pi$ is the projection projecting out $\R$, is an embedding. Consider then a new contact manifold $\R\times P^{\circ}$, where $P^{\circ}$ is obtained from $P$ by removing $V_{0}$ leaving the negative contact boundary $\partial V_{0}$ and inserting in its place the negative end $[0,-\infty)\times \partial V_{0}$. Another way to think about this contact manifold is the manifold that results from $\R\times P$ if all Reeb flow lines through the skeleton on $V_{0}$ is removed. 

The projection of the core $(n-1)$-disks $l$ of the top handles in $h$ to $P^{\circ}$ is then an immersed exact Lagrangian $\bar l$ with negative end $\partial l$ in $\partial V_{0}$. By construction, the Legendrian lift $l\subset \R\times P^{\circ}$ of the exact Lagrangian $\bar l$ has constant $z$-coordinate of equal value on all the components of $\partial l$ in the negative end. We can then define the Legendrian dg-algebra $CE^{\ast}(l,\R\times P^{\circ})$ in the standard way, with generators Reeb chords of double points of $\bar l$ and Reeb chords of $\partial l$ in the negative end, compare \cite{CD-RGG}. Recall that we use an index definite Weinstein structure on $V_{0}$. This means anchoring is trivial if $\dim(V_{0})\ge 4$ and if $\dim(V_{0})=2$ we anchor at Reeb orbits in the negative end $\partial V_{0}$ using punctured spheres in the filling $V_{0}\approx \R^{2}$, see Remarks \ref{r:dimV_0=2} and \ref{r:dimV_0=2discussion}. 

\begin{lemma}\label{l:diffincontact}
	The natural identification of Reeb chord generators gives a chain isomorphism
	\[ 
	CE^{\ast}(l;\R\times P^{\circ})\stackrel{\approx}{\longrightarrow}CE^{\ast}((V,h);\R\times(\R\times P)) .
	\] 
\end{lemma}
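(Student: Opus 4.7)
The plan is to verify that the natural bijection of Reeb chord generators promotes to an isomorphism of dg-algebras, by matching moduli spaces of holomorphic disks. As a first step, I would identify generators. By Lemma \ref{l:generators}, for a sufficiently small handle, the Reeb chords of $\Sigma(h)\subset \partial_{+}W_{V}^{0}$ split into \emph{long} chords, corresponding to Reeb chords of the copy of $l$ sitting in $\partial W=\R\times P$ (equivalently, to double points of $\bar l\subset P$), and \emph{short} chords, which by Lemma \ref{l:middlesubcritLeg} correspond to Reeb chords of $\partial l\subset \partial V_{0}$. The second copy of $l$ sitting in $\R\times V\subset\partial_{+}W_{V}^{0}$ contributes no Reeb chords of its own since it lies in a single $\R$-slice. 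The Reeb chords of $l\subset \R\times P^{\circ}$ admit an analogous decomposition: long chords lift double points of $\bar l$ in $P^{\circ}$, which coincide with double points of $\bar l$ in $P$ as these are disjoint from the excised region $V_{0}$ after generic perturbation; and short chords sit in the cylindrical negative end $\R\times[0,-\infty)\times\partial V_{0}$ and correspond to chords of $\partial l\subset\partial V_{0}$. This yields the candidate isomorphism on generators.

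To match differentials I would proceed via an SFT neck-stretching in $\partial_{+}W_{V}^{0}$ along a contact hypersurface separating the $V_{0}$-handle from the rest, using the handle-adapted almost complex structure of Section \ref{ssec:holcurveinhandle} that keeps the vertical fibers $V_{(x,y)}$ $J$-complex. By Lemma \ref{l:nocross}, no holomorphic disk with a positive puncture at a long chord can cross the handle, and by positivity of intersection with the $V_{(x,y)}$ the only way such a disk can enter the handle is via a negative puncture at a short chord. In the stretched limit every such disk decomposes into an outer component living away from the handle, together with handle components positively asymptotic to short chords. The outer piece lies in a symplectic manifold naturally identified with the corresponding subset of $\R\times(\R\times P^{\circ})$, since away from $V_{0}$ both ambient contact manifolds agree with $\R\times P$ and the neck region after stretching realizes precisely the Reeb dynamics on the cylindrical end $\R\times[0,-\infty)\times\partial V_{0}$. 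The handle components, in turn, are classified by Corollary \ref{c:dgsubalg} (together with Remarks \ref{r:dimV_0=2} and \ref{r:dimV_0=2discussion} when $\dim V_{0}=2$) and agree with the disks computing the short-chord part of $CE^{\ast}(l;\R\times P^{\circ})$, the anchoring convention in $V_{0}$ being the same on both sides. Gluing outer and handle components reconstructs the full moduli spaces in bijection, so the generator-level identification is a chain map.

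The main obstacle is the standard but delicate SFT bookkeeping in the stretching step: verifying that no unexpected building components arise in the limit, that the outer levels indeed embed into the corresponding moduli spaces of $\R\times(\R\times P^{\circ})$ (as opposed to producing broken configurations that do not glue back), and that orientations and signs match so that the generator-level identification upgrades to a chain isomorphism rather than merely a bijection of counts. Once these are handled, the claim follows.
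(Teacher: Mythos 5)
Your identification of generators is fine and agrees with the paper's. The divergence is in how the differentials are matched: the paper does not neck-stretch at all. It uses the almost complex structure of Section \ref{ssec:holcurveinhandle} to obtain an \emph{exact} localization with no limit and no gluing. By Lemma \ref{l:complexproj} and positivity of intersection, the portion of any contributing disk that enters the $V_{0}$-handle must lie in the fibers $V_{(0,y)}$ over the $y$-axis; the union of these fibers is $\partial V_{0}\times\R$, with the $\R$-direction realized by $y$-translation. Since curves in the symplectization of $\R\times P^{\circ}$ are determined up to translation by their projections to $P^{\circ}$, the $J_{V}$-biholomorphism from $\partial V_{0}\times[0,\infty)\subset P^{\circ}$ onto this union of fibers identifies the two sets of (unbroken) disks outright. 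That explicit fiberwise identification is the whole content of the proof.

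Your stretching route has a genuine gap at the point where you reconnect with the left-hand side. The disks computing the differential of a long chord on the right have negative \emph{asymptotics} at short chords in the middle of the handle; they do not carry ``handle components positively asymptotic to short chords'' waiting to be glued back on. The curves classified by Corollary \ref{c:dgsubalg} are the disks with positive puncture at a short chord, which compute the differential of the subalgebra $CE^{\ast}(\partial l;\partial V_{0})$ --- separate moduli spaces, not constituents of the long-chord disks. Gluing an outer piece negatively asymptotic to a short chord to a handle piece positively asymptotic to that same chord would cancel the short-chord puncture and produce a $\partial\circ\partial$-type configuration, not a term of the differential. Moreover, even where stretching is legitimate, the resulting levels are asymptotic to chords on the separating hypersurface, and converting that broken data into the \emph{unbroken} curves of $CE^{\ast}(l;\R\times P^{\circ})$ --- whose negative punctures sit in the infinite end $[0,-\infty)\times\partial V_{0}$ of $P^{\circ}$ --- still requires precisely the biholomorphic matching above. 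So the direct identification is not an optional refinement of your argument; without it the comparison of differentials does not close.
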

\begin{proof}
Using a complex structure on the $V_{0}$-handle as in Section \ref{ssec:holcurveinhandle} it is straightforward to check that curves contributing to the differential in the left and right hand sides above can be identified: 

First, it follows from Lemma \ref{l:complexproj} that holomorphic disks with negative punctures at the chords in the handle must lie in ${V_{0}}_{(0,y)}$-fibers, i.e., fibers over $(0,y)$ in the model of the symplectization of $G_{\delta}$, which have the form $\partial V_{0}\times\R$. (Here the $\R$-translation corresponds simply to moving the curve in the fiber along the $y$-axis.) Second, curves in the symplectization of $\R\times P^{\circ}$ are determined up to translation by their projection to $P^{\circ}$. Consequently, the $J_{V}$-biholomorphic map from $\partial V_{0}\times [0,\infty)\subset P^{\circ}$ to the $V_{0}$-fibers over the $y$-axis relates the curves in question. The lemma follows.  
\end{proof}
\begin{remark}
	Taking $P=\R^{2}$ in Lemma~\ref{l:diffincontact}, we find that our Chekanov--Eliashberg dg-algebra is isomorphic to the Chekanov--Eliashberg dg-algebra defined for Legendrian graphs by An--Bae in \cite{AB}, see in particular \cite[Theorem F]{AB}.
\end{remark}

\subsection{Examples}
In this section we study Chekanov--Eliashberg dg-algebras for singular Legendrians in several examples.

\subsubsection{The $n$-point algebra and $T^{\ast}\R^{2}$ from the point of view of singular Legendrians}
We study Chekanov--Eliashberg dg-algebras in dimension 1 and consider the case left out in Section \ref{ssec:cotangentS^n}.

\begin{example}\label{ex:3pts}
	We discuss the relation between wrapped Floer cohomology and Chekanov--Eliashberg dg-algebras for Weinstein handles attached to the 2-disk, following \cite{EL2}. The dg-algebra involved will appear as 'singularity link dg-algebra' in several examples below (also see Figure~\ref{fig:A2_sing}).
	Consider $(\R^{2},dx \wedge dy)$ with ideal contact boundary $(S^1,\frac12(xdy-ydx))$. Let $\Lambda$ be $n$ distinct points in the ideal contact boundary and let $V=T^\ast \Lambda \subset S^1$. Since $V$ is zero-dimensional, the only generators of $CE^\ast((V,h);\R^{2})$ are Reeb chords in $S^1$ of the core disks of the top handles $l = \Lambda$, and these are the following:
	\begin{itemize}
		\item $c^0_{ij}$ for $1 \leq i < j \leq n$,
		\item $c^p_{ij}$ for $1 \leq i,j \leq n$ and $p \geq 1$,
	\end{itemize}
	where the generator $c^p_{ij}$ is the Reeb chord starting at the $i^{\rm th}$ point, ending at the $j^{\rm th}$, and passing through the reference point $\ast$ $p$ times. 
	\begin{figure}[!htb]
		\centering
		\includegraphics{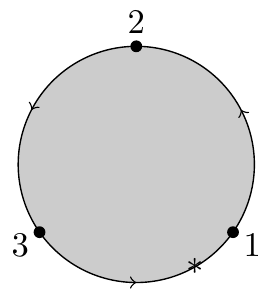}
		\hspace{2cm}
		\raisebox{3mm}{\includegraphics[scale=1.2]{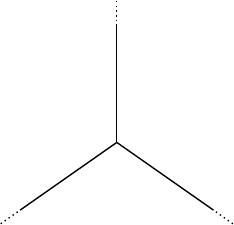}}
		\caption{Left: Generator Reeb chords $c^0_{12}$, $c^0_{23}$, and $c^1_{31}$ of $CE^\ast((V,h);\R^{2})$ when $\Lambda$ consists of three distinct points. Right: The arboreal $A_2$-Lagrangian in $\R^2$.}
		\label{fig:A2_sing}
	\end{figure}
	
	Choosing Maslov potentials $(m(1),\ldots,m(n))$ for the points in $\Lambda$ we get the following grading of the generators:
	\begin{equation}\label{eq:gradingI_n}
		|c^p_{ij}| = 1-2p - m(j)+m(i)\, .
	\end{equation}
	The differential $\partial$ is given by
	\begin{align}
		\partial(c^0_{ij}) &= \sum_{k=1}^n (-1)^{m(i) + m(k)}c^0_{kj}c^{0}_{ik} \label{eq:d1} \\
		\partial(c^1_{ij}) &= \delta_{ij} + \sum_{k=1}^n (-1)^{m(i)+m(k)} c^1_{kj}c^0_{ik} + \sum_{k=1}^n (-1)^{m(i)+m(k)} c^0_{kj}c^1_{ik} \label{eq:d2} \\
		\partial(c^p_{ij}) &= \sum_{\ell = 0}^p \sum_{k=1}^n (-1)^{m(i)+m(k)} c^{p- \ell}_{kj} c^{\ell}_{ik}, \quad p\geq 2 \label{eq:d3}\, ,
	\end{align}
	where $\delta_{ij} = e_i = e_j$ when $i = j$ and $\delta_{ij} = 0$ otherwise. (The idempotents come from disks anchored in the $\R^{2}$-filling.) In these formulas we use the convention $c^0_{ij} = 0$ for $i\geq j$. As usual, the differential extends to all of $CE^\ast((V,h);\C)$ by Leibniz rule. This means that $CE^\ast((V,h);\R^{2})$ is the internal algebra $\mathscr{I}_{n}$ of \cite[Definition 8]{EL2}.
	
	\begin{figure}[!htb]
		\centering
		\includegraphics{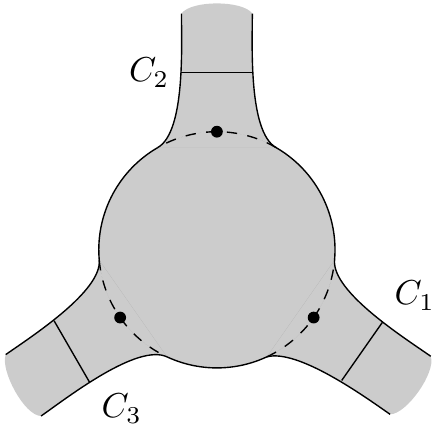}
		\caption{$\R^{2}_V$ with co-core disks $C = C_1 \cup C_2 \cup C_3$ when $\Lambda$ is three distinct points.}
	\end{figure}
	Add a stop at $V$ and let $C = C_1 \cup \cdots \cup C_n$ be the union of the co-core disks dual to the top handles in $V$. Then $CW^\ast(C;\R^{2}_V)$ is generated by $n$ Reeb chords $\{c_{12}, \ldots, c_{(n-1)n}, c_{n1}\}$ in $\partial \R_V$, where $c_{ij}$ denotes the unique Reeb chord starting at $\partial C_i$ and ending at $\partial C_j$. After a choice of Maslov potentials $(m(1),\ldots,m(n))$ of the components of $C$ we get the following grading: 
	\begin{align*}
		|c_{i(i+1)}| &= 1 + m(i+1)-m(i), \quad 1 \leq i \leq n-1, \\
		|c_{n1}| &= -1 + m(1)-m(n).
	\end{align*}
	Non-vanishing $A_\infty$-operations have as input any cyclic permutation of the cyclic sequence of Reeb chords $(c_{n1}, c_{(n-1)n},\ldots,c_{12})$. That is, for any $1\leq i \leq n-1$ we have
	\[
		\mathfrak m_n(c_{(i-1)i}, c_{(i-2)(i-1)}, \ldots, c_{n1}, c_{(n-1)n},\ldots,c_{(i+1)(i+2)},c_{i(i+1)}) = e_i\, .
	\]
	Then \cite[Corollary 11]{EL2}, shows that there is a quasi-isomorphism 
	$CW^\ast(C;\R^{2}_V) \approx \mathscr{I}_{n} = CE^\ast((V,h);\R^{2})$. 
\end{example}

\begin{example}\label{ex:unknot}
	We study the remaining  $n=2$ case of the example Section \ref{ssec:cotangentS^n} .
	Consider $(\R^4,dx_1 \wedge dy_1 + dx_2 \wedge dy_2)$, with ideal contact boundary $(S^3,\frac12(x_{1}dy_{1}-y_{1}dx_{1}+x_{2}dy_{2}-y_{2}dx_{2}))$.
	\begin{figure}[!htb]
		\centering
		\includegraphics{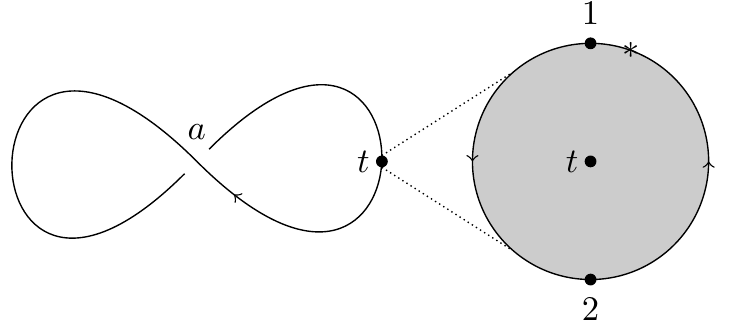}
		\caption{Lagrangian projection of the unknot in a Darboux chart and the handle decomposition of its cotangent neighborhood.}
		\label{f:unknot}
	\end{figure}
	Consider the unknot $\Lambda = \partial \R^2 \subset S^3$, and let $V\subset S^{3}$ be a small neighborhood of the zero-section in $T^\ast\Lambda$. The handle decomposition of $V$ is given by $h_0 = \{N(t)\}$, and the 1-handle $h_{1}$ is a cotangent neighborhood of the knot itself after removing the point $t$ as in Figure \ref{f:unknot}. The right hand part of Figure \ref{f:unknot} shows $V_{0}=h_0$ with $\partial l$, which consists of the points denoted $1$ and $2$ in $\partial V_{0}$. Let $l$ denote the core disk of the top handle $h_1$.

	The Chekanov--Eliashberg dg-algebra of $\partial l \subset \partial V_0$ is computed as in Example \ref{ex:3pts} and is generated by $t^0_{12}$ and $t^p_{ij}$ for $p\geq 1$ and $1 \leq i,j \leq 2$. After choosing Maslov potential $(m(1),m(2)) = (1,0)$ their gradings are $|t^p_{ij}| = 1-2p + m(j)-m(i)$.

	The Chekanov--Eliashberg dg-algebra $CE^\ast((V,h);\R^4)$ is generated by the generators of $CE^\ast(\partial l; V_{0})$ as described above and one additional generator $a$ of degree $-1$. By Lemma \ref{l:diffincontact}, the differential $\partial$ is computed via Figure \ref{f:unknot} as
	\begin{align}
		\partial a &= 1 - t^0_{12} \label{eq:1_unknot}\\
		\partial t^0_{12} &= \partial t^1_{21} = 0 \label{eq:2_unknot}\\
		\partial t^1_{ij} &= \delta_{ij} - \sum_{k=1}^2 t^1_{kj}t^0_{ik} - \sum_{k=1}^2 t^0_{kj}t^1_{ik} \qquad i \leq j \label{eq:3_unknot} \\
		\partial t^p_{ij} &= \sum_{\ell=1}^p \sum_{k=1}^2 (-1)^{m(i) + m(k)} t^{p- \ell}_{kj} t^\ell_{ik}, \qquad p\geq 2\, . \nonumber
	\end{align}
	(Here $\delta_{ij}$ is the Kronecker delta.) Note that our ground field in this case is $\C$, since $\Lambda$ is connected. As in \cite[Theorem 12]{EL2} we can consider the quasi-isomorphic model of $CE^\ast((V,h);\R^4)$ with generators $\left\{a,t^0_{12},t^1_{21}, t^1_{11}, t^1_{22}\right\}$ and differential as in \eqref{eq:1_unknot}, \eqref{eq:2_unknot} and \eqref{eq:3_unknot}. One can show that the homology is concentrated in degree 0 (see \cite[Proposition 14]{EL2}) and we compute
	\begin{equation}\label{eq:hce_ex}
		CE^0((V,h);\R^4) \approx \C[t^0_{12}, t^1_{21}]/\langle1-t^0_{12}, 1 - t^1_{21}t^0_{12}, 1 - t^0_{12}t^1_{21}\rangle \approx \C \, ,
	\end{equation}
	As in Section \ref{ssec:cotangentS^n}, $\R^4_V\approx T^{\ast}\R^{2}$ and we get the desired  quasi-isomorphisms: 
	\[
		 CE^0((V,h);\R^4) \ \approx  \ CW^\ast(C;\R^4_V) = CW^\ast(T^\ast_\xi \R^2;T^{\ast}\R^{2}) \ \approx \ C_{-\ast}(\Omega_\xi \R^2) \ \approx  \ \C .
	\]
\end{example}

\begin{example}\label{ex:noninv}
	We reconsider the Legendrian unkot of Example~\ref{ex:unknot} with a different handle structure, as an illustration of the dependence of the dg-algebra on the handle structure. We use two $0$-handles $N(t(1))$ and $N(t(2))$ and two $1$-handles $h_{1}$ and $h_{2}$ connecting them.
	\begin{figure}[!htb]
		\centering
		\includegraphics{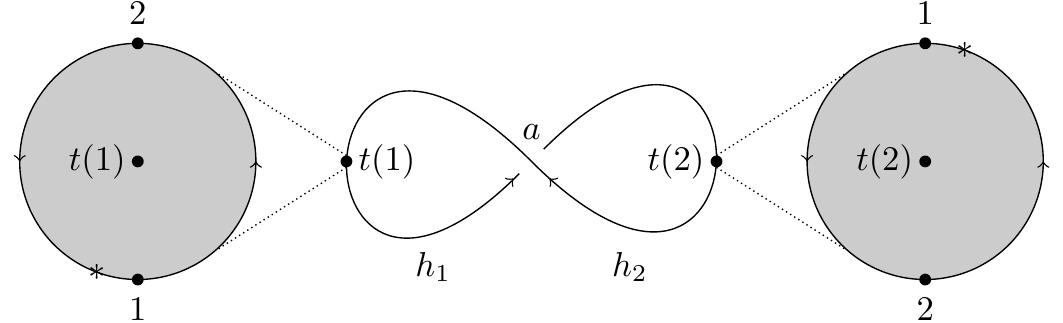}
		\caption{Lagrangian projection of the unknot in a Darboux chart with a different handle decomposition than in Example~\ref{ex:unknot}.}
		\label{f:unknot2}
	\end{figure}
	Disks contributing to the differential are shown in Figure~\ref{f:unknot2}. The differential of $a$ is given by
	\[ 
	\partial a = t(1)_{12}^{0} - t(2)_{12}^{0}
	\]
	and the differential of the generators $\{t(1)^p_{ij},t(2)^p_{ij}\}$ is given as in \eqref{eq:2_unknot} and \eqref{eq:3_unknot}. The homology is concentrated in degree $0$, and we have
	\[
		CE^0((V,h);\R^4) \approx \C[t(1)^0_{12},t(2)^0_{12},e_1,e_2]/I\, ,
	\]
	where 
	\[
		I = \langle t(1)_{12}^{0} - t(2)_{12}^{0}, e_1 - t(1)^1_{21}t(1)^0_{12},e_2 - t(1)^0_{12}t(1)^1_{21}, e_1 - t(2)^1_{21}t(2)^0_{12},e_2 - t(1)^0_{12}t(2)^1_{21}\rangle\, .
	\]
	It follows that $CE^{\ast}((V,h);\R^{4})$ is quasi-isomorphic to an algebra with two idempotents $e_{1}$ and $e_{2}$ and two generators $c_{1}$ and $c_{2}$ with relations $e_{j}c_{k}=\delta_{jk}c_k$, $c_{k}e_{j}=(1-\delta_{kj})c_k$, $c_1c_2=e_{1}$, and $c_2c_1=e_{2}$, which is the endomorphism algebra of two cotangent fibers in $T^{\ast}\R^{2}$, where $c_{j}$ are Reeb chords $c_{j}$ corresponding to straight line geodesics connecting the points where the fibers sit and where the products $c_{i}c_{j}$ correspond to disk with two positive punctures constrained by the minimum in either fiber disk. 
\end{example}

\subsubsection{Singular Lagrangian fillings in $\R^{4}$}\label{sssec:singlag}
We study dg-algebra maps of singular Lagrangian fillings in $\R^{4}$ and use them to show non-existence results for Lagrangians with restricted singularities. Throughout we write $\R^{4}$ for the standard symplectic 4-space with symplectic form $dx_{1}\wedge dy_{1}+dx_{2}\wedge dy_{2}$. We will think of $\R^{4}$ in two ways, as the symplectization of $\R^{3}$ with contact form $dz-ydx$ and as the Weinstein manifold with a single 0-handle. We will often present front pictures of Legendrians (projections to the $xz$-plane) and also for Lagrangians in $\R^{4}$ when viewed the symplectization of $\R^{3}$ where Lagrangians appear as asymptotically conical fronts, see \cite[Section 2]{EKH} for details.

\begin{example}\label{ex:singular_saddle}
	Let $\Gamma$ be the singular exact Lagrangian cobordism in $\R^{4}$ with front as in Figure~\ref{fig:birth_cob}. Let $\Lambda_+$ and $\Lambda_-$ denote the positive and negative boundaries of $\Gamma$ respectively. Let $(V^\pm,h^\pm)$ be Weinstein thickenings of $\Lambda_\pm$. Let $K \subset \R^4$ be a Weinstein domain which agrees with $(-\epsilon, \epsilon) \times V^+$ and $(-\epsilon,\epsilon) \times V^-$ in the positive and negative ideal contact boundaries of $\R^4$, see Section \ref{sssec:cobordisms}. Let $H$ be a handle decomposition of $K$ induced by $\Gamma$, and denote the union of the core disks of the critical handles of $H$ by $L$.
	\begin{figure}[!htb]
		\centering
		\includegraphics{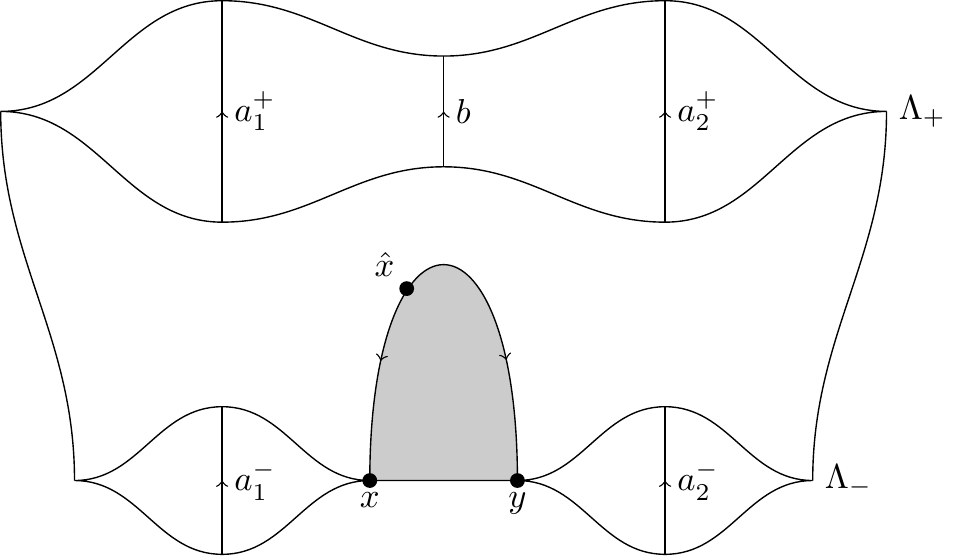}
		\caption{The front of the singular saddle cobordism $\Gamma$.}
		\label{fig:birth_cob}
	\end{figure}

	Consider $CE^\ast((K,H); \R \times (\R \times \R^4))$ as described in Section~\ref{sec:cobordism}. It contains the dg-subalgebra
	\[
		CE^\ast(\partial L; \partial K_0) \subset CE^\ast((K,H); \R \times (\R \times \R^4))\,.
	\]
	Recall that the boundary condition at the negative end of the $K$-handle means that the Liouville field points out of the handle which means there is a maximum along core of the attaching 1-handle with boundary in $\partial K_{0}$. We indicate the location of this maximum by $\hat x$ in Figure~\ref{fig:birth_cob} and denote the corresponding dg-algebra generators by $\left\{\widehat x^p_{ij}\right\}$. Together with the generators of the three point algebras in the negative end these form the Chekanov--Eliashberg dg-algebra $\widehat{\mathscr I}_{3}$ of 3 distinct points in $S^1$, multiplied by the 0-section in $T^{\ast}\R$ with a maximum and two minima in it. Apply \cite[Corollary 5.6]{EK} to Example~\ref{ex:3pts} to see the differential $d$:
	\begin{equation}\label{eq:diffhatI3}
		d \widehat x^p_{ij} = x^p_{ij} - y^p_{ij} + G(\partial \widehat x^p_{ij}),
	\end{equation}
where $\partial \widehat x^p_{ij}$ is the differential of $\widehat x^p_{ij}$ regarded as a generator of the 3-point algebra $\mathscr I_3$ and $G$ is the following operator on monomials: 
\begin{align*}
G(\widehat x^{p_{1}}_{i_{1}j_{1}}\widehat x^{p_{2}}_{i_{2}j_{2}}\dots \widehat x^{p_{m}}_{i_{m}j_{m}}) &=
\widehat x^{p_{1}}_{i_{1}j_{1}}x^{p_{2}}_{i_{2}j_{2}}\dots x^{p_{m}}_{i_{m}j_{m}}+
(-1)^{|x^{p_{1}}_{i_{1}j_{1}}|}y^{p_{1}}_{i_{1}j_{1}}\widehat x^{p_{2}}_{i_{2}j_{2}}\dots x^{p_{m}}_{i_{m}j_{m}}\\
&\quad +\dots+
(-1)^{|x^{p_{1}}_{i_{1}j_{1}}|+\dots+|x^{p_{m-1}}_{i_{m-1}j_{m-1}}|}y^{p_{1}}_{i_{1}j_{1}}y^{p_{2}}_{i_{2}j_{2}}\dots \widehat x^{p_{m}}_{i_{m}j_{m}}.
\end{align*}
Holomorphic curves in $\R \times (\R \times \R^2)$ with boundary on $\R \times K$ can be understood through Morse flow trees, which here since there are only two sheets locally are in fact flow lines. They give the cobordism dg-algebra map defined on generators as follows
	\begin{align*}
		\Phi\colon CE^\ast((V_+,h_+); \R^2) &\longrightarrow  CE^{\ast}((K,H); \R\times\R\times\R^{2}),\\
		a_1^+ &\longmapsto a_1^- + \widehat x^0_{12},  \\
		a_2^+ &\longmapsto a_2^-, \\
		b &\longmapsto y^0_{12}\,.
	\end{align*}
The disks giving the first terms in the first two equations correspond to straight flow lines. The last equation comes from the flow line starting at $b$ and hitting the singular locus and then continuing as a flow line in the singular locus to negative infinity. The second term in the first equation can be understood as coming from the 1-parameter family of disks obtained by gluing the strip at positive infinity with positive puncture at $a$ and negative puncture at $b$ to the disk from $b$ to the singular locus. This one parameter family travels into the cobordism and gets rigidified when it hits $\widehat x$. The rigidified disk then has a negative puncture at $\widehat x$. We verify algebraically that it indeed is a dg-algebra map:
	\begin{align*}
		(d \circ \Phi)(a_1^+) &= d(\widehat x^0_{12} +a_1^-) = 1+y^0_{12} = \Phi(1+b) = (\Phi \circ \partial)(a_1^+) \\
		(d \circ \Phi)(a_2^+) &= d(a_2^-) = 1+y^0_{12} = \Phi(1+b) = (\Phi \circ \partial)(a_2^+) \\
		(d \circ \Phi)(b) &= d(y^0_{12}) = (0,0) = (\Phi \circ \partial)(b)\, .
	\end{align*}
	\end{example}

\begin{example}\label{ex:death_filling_doesnt_exist}
	Let $\Lambda\subset\R^{3}$ be the singular Legendrian whose front projection is in Figure \ref{fig:unknot_edge}. Let $(V,h)$ be a Weinstein thickening of $\Lambda$. We show that $\Lambda$ does not admit any singular  Lagrangian filling with only 'Y-singularities', i.e., $\Gamma$ is a union of smooth 2-dimensional Lagrangian strata that meets along $S\times Y$, where $Y$ is a trivalent graph (see Figure~\ref{fig:A2_sing}) and $S$ is a 1-manifold with at least one component which has boundary on the trivalent $Y$ graphs around the singularities of $\Lambda$.
	
	To see this, assume that $\Gamma$ is such a filling and let $K \subset \R^{4}$ be its Weinstein thickening. Let $H$ be a handle decomposition of $K$ with critical core disks $L$ and with boundary on the core disks of $\Lambda$.
	\begin{figure}[!htb]
		\centering
		\includegraphics{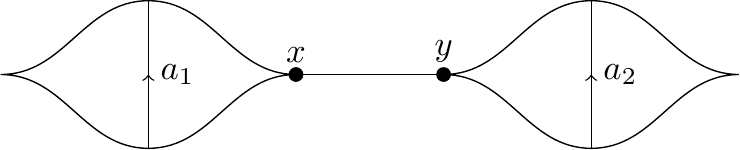}
		\caption{The front projection of the singular Legendrian $\Lambda$.}
		\label{fig:unknot_edge}
	\end{figure}
	We compute $CE^\ast((K,H); \R \times (\R \times \R^{4}))$ and show that there is no dg-algebra map $CE^{\ast}((V,h);\R^{4})\to CE^\ast((K,H); \R \times (\R \times \R^{4}))$ to conclude that no such singular exact Lagrangian filling exists. 

	Since there are no double points of $L$, $CE^\ast((K,H); \R \times (\R \times \R^4)) \approx CE^\ast(\partial L; \partial K_0)$. Since $\partial L$ is the product of the 3-point Legendrian in $S^1$ and a compact $1$-manifold $S$ we find that $CE^\ast(\partial L; \partial K_0)$ is a free product of algebras one for each component $S$ over the ring of idempotents of the $2$-cells. The algebra corresponding to a closed component is $\widehat{\mathscr{I}}_{3}$, see \eqref{eq:diffhatI3} with generators $x_{ij}^{p}$ and $y_{ij}^{p}$ identified. Here we can think of the generators as sitting at a maximum and a minimum in the circle component. The algebra corresponding to the component with boundary at infinity is $\mathscr{I}_{3}$ itself, see Example~\ref{ex:3pts}, with generators sitting at a minimum, recall the inwards boundary condition at positive infinity . We consider the map
	\[ 
	CE^{\ast}((V,h);\R^{4})\to CE^{\ast}((K,H);\R\times(\R\times\R^{4}))\to\mathscr{I}_{3},
	\]
	where the last map is the projection to the sub-algebra corresponding to the component of $S$ with boundary. We write $\{x^p_{ij}\}$ and $\{y^p_{ij}\}$ for the generators of the two copies of $\mathscr{I}_{3}$ at the singularities of $\Lambda$ and $\{\widecheck{x}^{p}_{ij}\}$ for the generators of $\mathscr{I}_{3}$ corresponding to the minimum in the non-closed component of $S$. The differential in $CE^\ast((V,h);\R^4)$ is given on the generators $a_1$ and $a_2$ by
	\begin{align*}
		\partial a_1 &= e_1 - x^0_{12} \\
		\partial a_2 &= e_2 - y^0_{12}\, .
	\end{align*}
	The differential on the generators $\{x^p_{ij}\}$ and $\{y^p_{ij}\}$ is given by \eqref{eq:d1}, \eqref{eq:d2} and \eqref{eq:d3} and has the following property. For any generator $c \in \mathscr{I}_{3}$, either $\partial c$ or $\partial c - e_i$, for some $i \in \left\{1,2,3\right\}$, is a sum of word length 2 generator monomials. It follows that for any monomial of generators $w$ of word length $k$, $\partial w = s_{k-1} + s_{k+1}$ where $s_{k\pm 1}$ are sums of monomials of word length $k\pm 1$ and $s_{k+1}$ is not the empty word. 

	The dg-algebra map $\phi$ induced by the proposed singular exact Lagrangian filling $\Gamma$ acts as follows on $\{x^p_{ij}\}$ and $\{y^p_{ij}\}$:
	\[
		\phi(x^p_{ij}) = \phi(y^{p}_{ij}) = \widecheck x^p_{ij}\,,
	\]
	and therefore
	\begin{equation}\label{eq:wrong}
		\partial(\phi(a_1)) = \phi(\partial a_1) = e_1 + \widecheck x^0_{12} \, .
	\end{equation}
	Equation \eqref{eq:wrong} contradicts $\mathscr{I}_{3}$ being non-trivial as follows. Write $\phi(a_{1})=t_{\rm e} + t_{\rm o}$, where $t_{\rm e}$ and $t_{\rm o}$ are linear combinations of monomials of even and odd word length, respectively. Since the differential changes word length mod 2 it follows that $\partial t_{\rm o}=e_{1}$ which is not true in $\mathscr{I}_{3}$. 
	
	One can also show the non-existence of $\Gamma$ from a geometrical point of view. Concatenating the singular exact Lagrangian cobordism in Example \ref{ex:singular_saddle}, which is depicted in Figure \ref{fig:birth_cob}, with $\Gamma$ gives a singular exact Lagrangian filling of the unknot. Removing top dimensional components that have no boundary at infinity, one would construct an embedded exact Lagrangian filling of the unknot of genus $\ge 1$, but such a filling does not exist.
\end{example}

\begin{example}\label{ex:theta_leg}
	Consider $\R^4$, as the completion of the symplectic ball with ideal contact boundary standard contact $S^3$. Consider the singular exact Lagrangian
	\[
		\Gamma = \{(x_1,x_2)\} \cup \left\{(y_1,x_2) \mid y_1 \geq 0\right\}\subset \R^4\, .
	\]
	We point out that if we identify $\R^4$ with $T^\ast \R^2$ then $\Gamma \approx \R^2 \cup L^+_{\left\{x_2 = 0\right\}} \subset T^\ast \R^2$ where $L^+_{\left\{x_2 = 0\right\}}$ is the positive conormal bundle of the hypersurface $\left\{x_2= 0\right\}\subset \R^2$. Thus, $\Gamma$ is an arboreal singularity, see \cite{Nadler,AGEN1,AGEN3}.

	Let $\Lambda = \Gamma \cap S^3$ be the singular Legendrian boundary of $\Gamma$. After Legendrian isotopy, $\Lambda$ lies in a Darboux ball identified with an open subset of standard contact $\R^{3}$, and $\Lambda$ appears as in Figure \ref{fig:lag_proj_theta}. 
	Consider $V = T^\ast \Lambda \subset S^3$ with handle decomposition $h$ as indicated in Figure~\ref{fig:lag_proj_theta}. Let $K \subset \R^4$ be a Weinstein domain which agrees with $(-\epsilon, \epsilon) \times V$ in the ideal contact boundary of $\R^4$. Let $H$ be a handle decomposition of $K$ induced by $\Gamma$, and denote the union of the core disks of the critical handles of $H$ by $L$.
	\begin{figure}[!htb]
		\centering
		\includegraphics{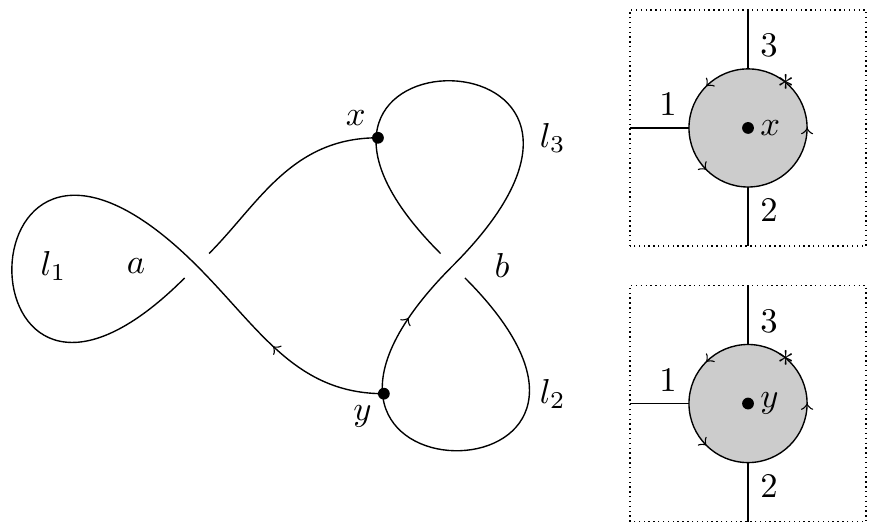}
		\caption{Lagrangian projection of $\Lambda$ in a Darboux ball (left) with magnified singularity links (right).}
		\label{fig:lag_proj_theta}
	\end{figure}
	The Chekanov--Eliashberg dg-algebra of $(V,h)$ is generated by the long Reeb chords $a$, $b$ and by the collection $\left\{x^p_{ij}\right\} \cup \left\{y^p_{ij}\right\}$ which generates the dg-subalgebra $CE^\ast(\partial l; \partial V_0)$, and is equal to two copies of $\mathscr I_3$, see Example \ref{ex:3pts}. The differential $\partial$ in $CE^\ast((V,h);\R^4)$ of the generators $a$ and $b$ is given by
	\begin{align*}
		\partial a &= e_1 + y^1_{31}bx^0_{12} + y^1_{31}x^0_{13} - y^1_{21}x^0_{12} \\
		\partial b &= x^0_{23} - y^0_ {23} \, ,
	\end{align*}
	and on the generators $x^p_{ij}$ and $y^p_{ij}$ by \eqref{eq:d1}, \eqref{eq:d2} and \eqref{eq:d3}. 

	The Chekanov--Eliashberg dg-algebra of $(K,H)$ is $CE^\ast(\partial L; \partial K_0) = \mathscr I_3$, see Example~\ref{ex:3pts}. Denote the generators of $CE^\ast(\partial L; \partial K_0)$ by $\left\{\widecheck x^p_{ij}\right\}$. The induced dg-algebra map is then
	\begin{align*}
		\varepsilon \colon CE^\ast((V,h);\R^4) &\longrightarrow CE^\ast((K,H); \R \times (\R \times \R^4)) \\
		a &\longmapsto \widecheck x^1_{11} \\
		x^p_{ij}, y^p_{ij} &\longmapsto \widecheck x^p_{ij} \\
		b& \longmapsto 0\, .
	\end{align*}
	The second equation comes from flow lines along the interval of singularities starting at infinity and ending at the minimum $\widecheck x$. To see the first equation, follow the family of holomorphic disks at infinity with boundary on $l_{1}$ into the core disk with boundary $L_{1}$. At some instance the boundary hits the singularity link, projecting to a complex line perpendicular to the complexification of the line of singularities, one finds that this disk is asymptotic to the chord that goes once around. The contributing configuration is then this disk with a flow line in the manifold of once around chords that ends at the minimum $\widecheck x^{1}_{11}$ attached.    
\end{example}

\begin{example}\label{ex:a3_link}
	As in Example \ref{ex:theta_leg} we consider the singularity link of an arboreal singularity in $\R^{4}$. Let
	\[
		L = \{(x_1,x_2)\} \cup \left\{(y_1,x_2) \mid y_1 \geq 0\right\} \cup \left\{(x_1,iy_2) \mid y_2 \leq 0\right\} \subset \R^4\, .
	\]
	Then using $\R^4 \approx T^\ast \R^2$, we have $\Gamma \approx \R^2 \cup L^+_{\left\{x_1 = 0\right\} \cup \left\{x_2 = 0\right\}} \subset T^\ast \R^2$ where $L^+_{\left\{x_1 = 0\right\} \cup \left\{x_2 = 0\right\}}$ is the positive conormal bundle of the hypersurface $\left\{x_1 = 0\right\} \cup \left\{x_2 = 0\right\}\subset \R^2$, where we assume that $\left\{x_1 = 0\right\}$ and $\left\{x_2 = 0\right\}$ have been equipped with opposite co-orientations, and $\Gamma$ is an arboreal singularity. 
	
	Let $\Lambda = \Gamma \cap S^3$ and consider $V = T^\ast \Lambda \subset S^3$ with handle decomposition $h$ as indicated in Figure \ref{f:a3link}. Let $l$ denote the union of the core disks of the top handles. Let $K \subset \R^4$ be a Weinstein domain which agrees with $(-\epsilon, \epsilon) \times V$ in the ideal contact boundary of $\R^4$. Let $H$ be a handle decomposition of $K$ with boundary $h$. Denote the union of the core disks of the critical handles of $H$ by $L$. 
	\begin{figure}[!htb]
		\centering
		\includegraphics{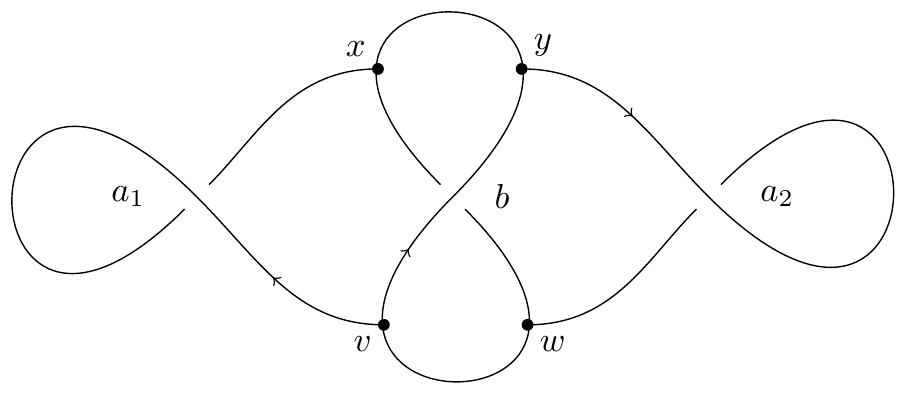}
		\caption{Lagrangian projection of the singularity link $\Lambda$.}
		\label{f:a3link}
	\end{figure}
	The Chekanov--Eliashberg dg-algebra $CE^\ast((V,h);\R^4)$ is generated by Reeb chords of $l \subset S^3$, denoted $a_1$, $a_2$ and $b$ in Figure \ref{f:a3link}. Other generators are Reeb chords of $\partial l \subset \partial V_0$, i.e., of the singularity links at $x,y,z,w$:
	\[
		\left\{x^p_{ij}\right\} \cup \left\{y^p_{ij}\right\} \cup \left\{v^p_{ij}\right\} \cup \left\{w^p_{ij}\right\},
	\]
	where the notation is as usual.
	This collection generates the dg-subalgebra $CE^\ast(\partial l; V_0)$ which contains four copies of $\mathscr I_3$, see Example \ref{ex:3pts}.

	The differential $\partial$ on  $a_1, a_2,b$ is
	\begin{align*}
		\partial a_1 &= e_1 + v^1_{31}bx^0_{12} - v^1_{21}w^0_{23}x^0_{12} + v^1_{31}y^0_{23}x^0_{13} \\
		\partial a_2 &= e_2 - y^1_{31}bw^0_{12}-y^1_{21}x^0_{23}w^0_{12}+y^1_{31}v^0_{23}w^0_{13} \\
		\partial b &= y^0_{23}x^0_{23} - v^0_{23}w^{0}_{23}\, .
	\end{align*}

	As in Example~\ref{ex:death_filling_doesnt_exist}, we show that $\Lambda$ does not admit a singular exact Lagrangian filling $\Gamma$ with only Y-singularities. Assume that such a filling $\Gamma$ exists. Then since $\Lambda$ has four Y-singularities, $\Gamma$ is a union of smooth 2-dimensional strata meeting along $S \times Y$ where $Y$ is a trivalent graph and $S$ is a 1-manifold with at least two components with boundary on the vertices of $\Lambda$. The two components with boundary subdivides the four collection $\left\{x^p_{ij}\right\}$, $\left\{y^p_{ij}\right\}$, $\left\{v^p_{ij}\right\}$, and $\left\{w^p_{ij}\right\}$ into two pairs connected by the components, and $\Gamma$ induces a dg-algebra map
	\[
		\varepsilon \colon CE^\ast((V,h); \R^4) \longrightarrow CE^\ast((K,H);\R \times (\R \times \R^4)) \longrightarrow \mathscr I_3 \ast \mathscr I_3,
	\]
	where $\mathscr{I}_{3}\ast\mathscr{I}_{3}$ denotes the free algebra of two copies of $\mathscr{I}_{3}$ over the ring of idempotents.  
	We find that
	\begin{align}
		\partial(\varepsilon(b)) &= \varepsilon(\partial b) = \varepsilon(y^0_{23})\varepsilon(x^0_{23})-\varepsilon(v^0_{23})\varepsilon(w^0_{23}) \label{eq:b_wrong} \\
		\partial(\varepsilon(a_1)) &= \varepsilon(\partial a_1) = e_1 + \varepsilon(v^1_{31}) \varepsilon(b) \varepsilon(x^0_{12}) - \varepsilon(v^1_{21})\varepsilon(w^0_{23})\varepsilon(x^0_{12}) + \varepsilon(v^1_{31})\varepsilon(y^0_{23})\varepsilon(x^0_{13}) \label{eq:a1_wrong}
	\end{align}
	As in Example \ref{ex:death_filling_doesnt_exist}, the differential on $\mathscr I_3$ changes word length mod 2 and the same is true for the differential on $\mathscr I_3 \ast \mathscr I_3$. We conclude first from \eqref{eq:b_wrong} that $\varepsilon(b)$ is a sum of monomials of generators of odd word length. Hence we have from \eqref{eq:a1_wrong} that $\partial(\varepsilon(a_1)) = e_1 + r_{\text{o}}$, where $r_{\text{o}}$ is a sum of monomials of generators of odd word length. Then as in Example \ref{ex:death_filling_doesnt_exist}, write $\varepsilon(a_1) = t_{\text{o}} + t_{\text{e}}$, with $t_{\rm e}$ and $t_{\rm o}$ linear combinations of monomials of even and odd word length and conclude $\partial t_{\rm o}=e_{1}$. Since no such equation holds in $\mathscr{I}_{3} \ast \mathscr I_3$, the singular exact Lagrangian filling $\Gamma$ with only Y-singularities cannot exist.
\end{example}

\begin{example}
	As in Examples \ref{ex:theta_leg} and \ref{ex:a3_link} we consider the singularity link of an arboreal singularity in $\R^{4}$. Let
	\[
		\Gamma := \R^2 \cup L^+_{\left\{x_1=0\right\}\left\{x_1 \leq 0, \; x_2 = x_1^2\right\}} \subset T^\ast \R^2
	\]
	 where $L^+_{\left\{x_1=0\right\}\left\{x_1 \leq 0, \; x_2 = x_1^2\right\}}$ is the positive conormal bundle of $\left\{x_1=0\right\} \cup \left\{x_1 \leq 0, \; x_2 = x_1^2\right\}\subset \R^2$. Then $\Gamma$ is the arboreal $A_3$-Lagrangian, \cite{Nadler,AGEN1,AGEN3}. 
	The Lagrangian projection of $\Lambda$ is shown in Figure~\ref{f:a3link2}. As above we let $(V,h)$ be a fattening of $\Lambda$ and Let $(K,H) \subset \R^4$ be a filling.
	\begin{figure}[!htb]
		\centering
		\includegraphics{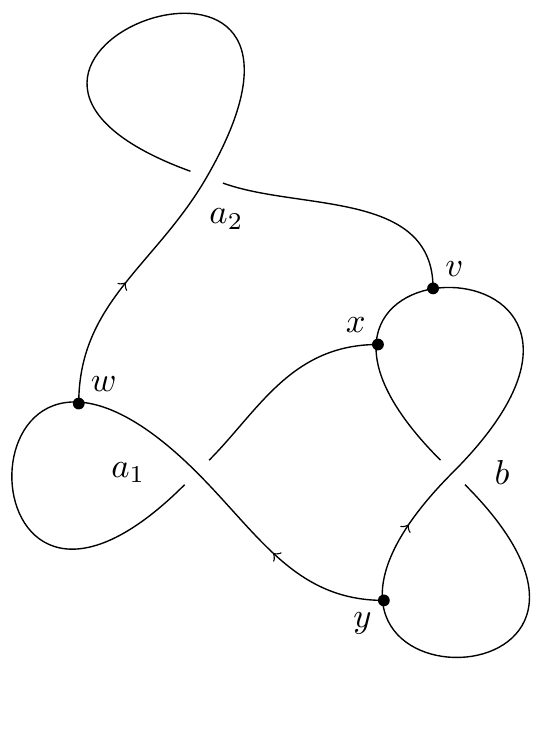}
		\caption{The figure shows the Lagrangian projection of $\Lambda$.}
		\label{f:a3link2}
	\end{figure}
	The Chekanov--Eliashberg dg-algebra $CE^\ast((V,h);\R^4)$ is generated by the long Reeb chords $a_1, a_2, b$ of $l \subset S^3$. The differential $\partial$ is given on the generators $a_1, a_2,b$ by
	\begin{align*}
		\partial a_1 &= w^0_{23} + y^1_{31}bx^0_{12} - y^1_{21}x^0_{12} + y^1_{31}v^0_{23}x^0_{13} \\
		\partial a_2 &= e_3 - w^1_{21}x^1_{31}v^0_{12} - w^1_{31}(y^1_{31}v^0_{13} + (a_1x^1_{31}-y^1_{21}x^1_{32}+y^1_{31}bx^1_{32}+ y^1_{31}v^0_{23}x^1_{33})v^0_{12})\\
		\partial b &= v^0_{23}x^0_{23} - y^0_{23}\, .
	\end{align*}
	The differential on generators $x^p_{ij}$, $y^p_{ij}$, $v^p_{ij}$, $w^p_{ij}$ is as usual. 
	
	As in Example \ref{ex:a3_link}, we show that $\Lambda$ does not admit any singular exact Lagrangian filling $\Gamma$ with only Y-singularities: such $\Gamma$ would induce a dg-algebra map
	\[
		\varepsilon \colon CE^\ast((V,h);\R^4) \longrightarrow CE^\ast((K,H);\R \times (\R \times \R^4)) \longrightarrow \mathscr I_3 \ast \mathscr I_3\, 
	\]
	and we would have
	\[
		\partial(\varepsilon(b)) = \varepsilon(\partial b) = \varepsilon(v^0_{23})\varepsilon(x^0_{23}) - \varepsilon(y^0_{23})\, .
	\]
	Recall from Example \ref{ex:a3_link} that the differential in $\mathscr I_3 \ast \mathscr I_3$ changes word length mod 2. We write $\varepsilon(b) = t_{\text{o}} + t_{\text{e}}$, where $t_{\rm e}$ and $t_{\rm o}$ are linear combinations of monomials of even and odd word length, respectively. Then $\partial(t_{\text{e}}) = \varepsilon(y_{23}^0)$, but this is not true in $\mathscr I_3 \ast \mathscr I_3$, since $\varepsilon(y^0_{23})$ equals a short chord in $CE^\ast(\partial L; \partial K_0)$ and such a chord is not homologous to $0$ in $\mathscr{I}_{3}$.
	
\end{example}
\begin{example}\label{ex:three_spheres}
	Consider $\R^6$ with its standard symplectic form and ideal contact boundary standard contact $S^5$. As in the $\R^{4}$-examples above, we draw Legendrians in a Darboux chart of $S^{5}$ that we think of as standard contact $\R^{5}$. We will consider a singular Legendrian $\Lambda\subset S^{5}$ which has one Reeb chord of length zero. Resolving this double point we get embedded Legendrian tori. We show here the dg-algebra of the singular Legendrian $\Lambda$ when equipped with suitable augmentations of its singularity link subalgebra is dg-algebra equivalent to any nearby smooth Legendrian torus, compare Corollary \ref{cor:CEconnectsumtrivial}. The smooth nearby Legendrian tori were studied in \cite{DR}. 
	
	Consider the singular Legendrian submanifold $\Lambda$ with front as in Figure~\ref{fig:singulartorus}. The singularity of $\Lambda$ is one immersed point (which can be though of as a Reeb chord of length zero). Let $V = T^\ast \Lambda \subset S^5$ be a Weinstein thickening of $\Lambda$ with handle decomposition $h$. Here $h$ has one $0$-handle centered at the singular double points of $\Lambda$ and one $1$-handle which together form $S^{1}\times D^{3}$. Finally there is one $2$-handle. We denote its core disk $l$. The attaching sphere $\partial l$ for $l$ can then be described as follows. The intersection of the boundary of the $0$-handles with $\Lambda$ is a Legendrian Hopf link in $S^{3}$. The $1$-handle is attached on this Hopf link, connecting its components by the standard two strand Legendrian through the handle, see \cite{ENg}. 
	\begin{figure}[!htb]
		\centering
		\includegraphics{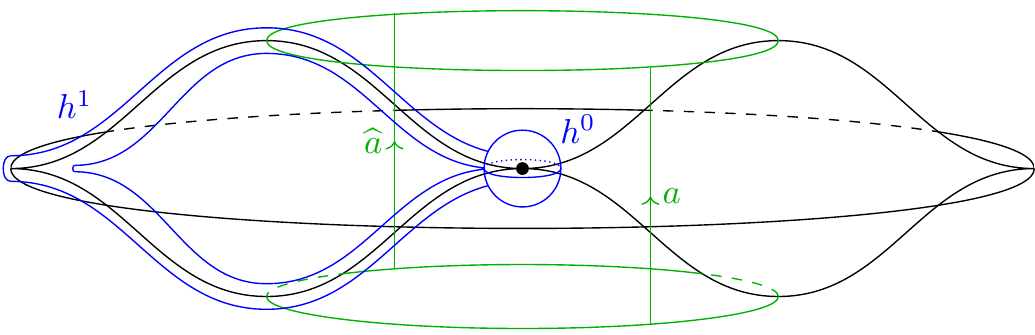}
		\caption{The front of the singular Legendrian $\Lambda$ with an immersed point. The $0$-handle is a neighborhood of the immersed point and the $1$-handle is a neighborhood of a curve connecting the singular point to itself. The family of external Reeb chords is indicated in green.}
		\label{fig:singulartorus}
	\end{figure}
	
	The dg-algebra of $(V,h)$ is then the following. At the minimum of the $1$-handle sits the dg-algebra $\mathscr{I}_{2}$, we denote its generators $c_{ij}^{p}$. The differential is as in Example \ref{ex:3pts}. We represent the singularity link as the boundary of two transverse planes. This dg-algebra is quasi-isomorphic to the standard Hopf link dg-algebra of \cite[Section 3]{DRET}. Reeb chords of the Hopf link in $S^{3}$ come in pairs of $S^{1}$-families of length $\tfrac{k\pi}{2}$, $k>0$. We will use only the shortest chord families which after Morsification give rise to two chords each: $p$ and $\widehat p$, and $q$ and $\widehat q$. Since the 1-handle connects the Hopf link components the differential is as follows, see Figure \ref{fig:differentialsingtor}:
	\[ 
	\partial p =\partial q =0,\quad \partial\widehat p = p-c_{21}^{1}pc_{12}^{0},\quad \partial \widehat q=q-c_{12}^{0}qc_{21}^{1}.
	\] 
	Also the exterior Reeb chords come in an $S^{1}$-family. After Morsification we get two chords $\widehat a$ and $a$. The differential is as follows:
	\begin{align*} 
	\partial\widehat a &= a - c_{21}^{1}ac_{12}^{0} + \widehat p - c_{11}^{1},\\ 
	\partial a &= e-p,
	\end{align*}
	see Figure \ref{fig:differentialsingtor}.
	
	\begin{figure}[!htb]
		\centering
		\includegraphics{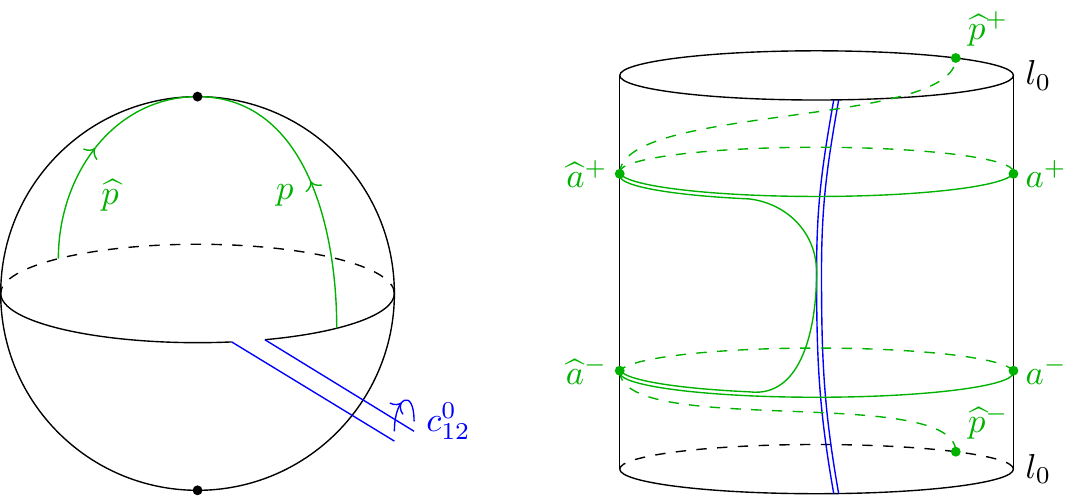}
		\caption{The differential.}
		\label{fig:differentialsingtor}
	\end{figure}
	The left hand picture shows the differential of $\widehat p$: both flow lines of Reeb chord endpoints hit the attaching locus of $h^{1}$, enter the handle and hit the short chords in the middle of the handle. 
	
	The right hand picture shows the boundary of the curves that contribute to the differential of the exterior chords on a cylinder parameterizing the singular Legendrian, the boundary maps to the core disk of the $0$-handle, $l_0$. The differential of $a$ is the vertical line connecting $a^{+}$ to $a^{-}$ and the vertical lines connecting to the boundary. The disks for $\partial\widehat{a}$ are the line which is tangent to the attaching locus of the $1$-handle. It gives $c^{1}_{11}$, a flow line to the minimum in the $1$-handle connects the tangency point to $c^{1}_{11}$. The pair lines intersecting the $1$-handle and ending at $a^{\pm}$ gives $c_{21}^{1}ac_{12}^{0}$, and the pair ending at $a^{\pm}$ not intersecting the $1$-handle gives $a$, flow lines to the short chords are split off at the intersections. Finally the lines to $\widehat p^{\pm}$ gives the contribution $\widehat p$.
	
	The Legendrian tori in \cite{DR} are obtained by resolving the double point of $\Lambda$ in two different ways, as a Lagrangian cone and as a cusp edge. This in turn correspond to distinct Lagrangian fillings of the Hopf link which induces different augmentations of its dg-algebra, see \cite[Equations (4.2) and (4.4)]{DRET}. The resulting augmentations here are $\epsilon$ and $\epsilon'$:
	\begin{align*}
	&\epsilon(c_{12}^{0})=\epsilon'(c_{12}^{0})=\lambda,\quad \epsilon(c_{21}^{1})=\epsilon'(c_{21}^{1})=\lambda^{-1},\\
	&\epsilon(p)=\mu,\quad\epsilon'(p)=\mu-\mu\lambda.
	\end{align*} 
\end{example}

\bibliographystyle{hplain}
\bibliography{cesingrefs}
\end{document}